\numberwithin{equation}{section}
\newtheorem{thm}{Theorem}[section]
\newtheorem{lemma}[thm]{Lemma}
\newtheorem{prop}[thm]{Proposition}
\newtheorem{cor}[thm]{Corollary}
{\theorembodyfont{\rmfamily}
\newtheorem{defn}[thm]{Definition}
\newtheorem{rmk}[thm]{Remark}
}
\newcommand{\qed}{\hfill \mbox{\raggedright \rule{.07in}{.1in}}}
\newenvironment{proof}{\vspace{1ex}\noindent{\bf
Proof}\hspace{0.5em}}{\hfill\qed\vspace{1ex}}
\newenvironment{pfof}[1]{\vspace{1ex}\noindent{\bf Proof of #1}\hspace{0.5em}}{\hfill\qed\vspace{1ex}}
\newcommand{\B}{{\mathcal B}}
\newcommand{\R}{{\mathbb R}}
\newcommand{\E}{{\mathbb E}}
\newcommand{\N}{{\mathbb N}}
\newcommand{\M}{{\mathcal M}}
\renewcommand{\P}{{\mathbb P}}
\newcommand{\Leb}{\operatorname{Leb}}
\def\eps{\varepsilon}
\def\GR{G}
\def\s{\mathcal{S}}
\def\r{\sigma}
\def\uu{\chi}
\def\subjclass#1{\par\medskip
\noindent\textbf{Mathematics Subject Classification (2010):} #1}
\def\keywords#1{\par\medskip
\noindent\textbf{Keywords.} #1}
\begin{document}

\title{The pressure function for infinite equilibrium measures}
\author{Henk Bruin 
\thanks{Faculty of Mathematics, University of Vienna, 
Oskar Morgensternplatz 1, 1090 Vienna, Austria, {\it henk.bruin@univie.ac.at}}
\and Dalia Terhesiu 
\thanks{Department of Mathematics, University of Exeter, Streatham Campus,
North Park Road, Exeter EX4 4QF, UK, {\it daliaterhesiu@gmail.com}}
\and Mike Todd
\thanks{Mathematical Institute, University of St Andrews
North Haugh, St Andrews KY16 9SS, Scotland, {\it m.todd@st-andrews.ac.uk}}
}

\date{\today} 

\maketitle

\abstract{Assume that $(X,f)$ is a dynamical system and $\phi:X \to [-\infty, \infty)$ is a potential
such that the $f$-invariant measure $\mu_\phi$ equivalent to the $\phi$-conformal
measure is infinite, but that there is an inducing scheme $F = f^\tau$
with a finite measure $\mu_{\bar\phi}$ and polynomial tails $\mu_{\bar\phi}(\tau \geq n) = O(n^{-\beta})$,
$\beta \in (0,1)$.
We give conditions under which the pressure of $f$ for a perturbed
potential $\phi+s\psi$ relates to the pressure of the induced system as
$P(\phi+s\psi) = (C P(\overline{\phi+s\psi}))^{1/\beta} (1+o(1))$,
together with estimates for the $o(1)$-error term.
This extends results from Sarig~\cite{Sarig06} to the setting of infinite equilibrium states.
We give several examples of such systems, thus improving on the results of Lopes~\cite{L93}
for the Pomeau-Manneville map with potential $\phi_t = - t\log f'$, as well
as on the results by Bruin \& Todd ~\cite{BT07, BT12} on countably piecewise linear unimodal Fibonacci maps.
In addition, limit properties of the family of measures $\mu_{\phi+s\psi}$ as $s\to 0$ are 
studied and statistical properties (correlation coefficients and arcsine laws) 
under the limit measure are derived.}

\section{Introduction}\label{sec:intro}
 The (variational) pressure of a dynamical system $(X,f)$ and potential $\phi:X \to [-\infty, \infty)$
is
\begin{equation}\label{eq:Pvar}
P(\phi) := \sup\left\{ h_\mu(f) + \int_X \phi \, d\mu : \mu \in \M \text{ and } -\int \phi~d\mu<\infty\right\},
\end{equation}
where $\M$ is the class of $f$-invariant probability measures and $h_\mu(f)$ is the Kolmogorov entropy.
If a measure $\mu_\phi \in \M$ achieves this supremum, then it is called an {\em equilibrium measure}.
In ``good'' cases \cite{Bow75}, $\mu_\phi$ can be obtained as $d\mu_\phi = v \ dm_\phi$,
where $v$ is the normalised eigenfunction of the transfer operator
$L_\phi v(x) = \sum_{f(y) = x} e^{\phi(y)} v(y)$ associated to its leading eigenvalue
$\lambda(\phi)$, and $m_\phi$ (the conformal measure) is the eigenmeasure
of the dual operator $L^*_\phi$ associated to the same eigenvalue.
Moreover, this eigenvalue satisfies $\lambda(\phi) = e^{P(\phi)}$.

Frequently, one considers parametrised families of potentials $\phi_t = t\phi$,
where $T = 1/t$ is (classically) called ``temperature''.  If the (continuous) function
$t \mapsto P(\phi_t)$ fails to be real analytic at $t_0$, we speak of a {\em phase transition}. 
The largest $r \geq 1$ such that this function is $C^{r-1}$, but not $C^r$ at $t_0$ is called the {\em order}
of the phase transition.
In the examples we know of, an order $> 1$ indicates that as $t \to t_0$, $\mu_{\phi_t}$
does not converge to a finite equilibrium measure that is absolutely continuous
w.r.t.\ $t_0$-conformal measure $m_{t_0\phi}$.
However, it is possible that $m_{t_0\, \phi}$ exists, and a measure $\mu_{t_0 \, \phi}$
such that $d\mu_{t \, \phi_0} = v dm_{t_0 \, \phi}$ exists as well,
although the density $v \notin L^1(m_{t_0 \, \phi})$; this is the null recurrent case.
It is these ``infinite equilibrium states'' that are the topic of this paper.

A classical result of Fisher \& Felderhof \cite{FF70} presents physical systems
with phase transitions of arbitrary high order.
The standard mathematical example of this phenomenon is the Pomeau-Manneville
map $f:[0,1] \to [0, 1]$ (see \eqref{eq:nMPM}) with $\phi_t = -t \log f'$,  
where the order of tangency at the neutral fixed point
is the order of the phase transition: $P(\phi_t) \sim C (1-t)^\alpha$ for $t \lesssim 1$,
see Lopes \cite{L93}. His proof is based on the full shift with a potential mimicking $\phi_t$,
but the step from the symbolic to the (nonlinear) Manneville-Pomeau case is not entirely clear to us. 
A new proof is provided in this paper; for details, we refer to the statement and the proof of 
Proposition~\ref{prop:psilogtau}.

There are several reasons to be interested in the shape and smoothness of
the pressure function at phase transitions.
For example, in multifractal analysis,
the multifractal spectrum can often be realised as the Legendre transform of a pressure function.  
In good cases, this means that locally, it is the inverse of the derivative of the pressure function.  
So the smoothness of the pressure dictates the smoothness of the spectrum.  In particular, as above, 
in our examples we focus on the potentials $-t\log|f'|$, so our results have consequences for the Lyapunov spectrum.

The shape of the pressure function for the perturbed potential $\phi+s\psi$ ($s \approx 0$) also 
relates to limit laws of ergodic averages of $\psi$.
This is classical when $\mu_\phi$ is finite and $\frac{d}{ds} P(\phi+s\psi)$ and
$\frac{d^2}{ds^2}P(\phi+s\psi)$ are both finite: they are the expectation and variance, respectively, 
in the Central Limit Theorem that $\psi$ satisfies, see \cite[Propositions 4.10 and 4.11]{PP}.
Sarig \cite[Theorem 2]{Sarig06} shows that in the particular case that the system is Markov and satisfies the 
big image and preimage (BIP) property (see Section~\ref{sec:thermo}), then the following 
can be shown (where we scale $P(\phi) = 0$): $\psi$ is in the domain 
of a $\alpha$-stable law 
for some $\alpha \in (1,2)$  if and only if $P(\phi+s\psi) \sim C s + s^{\alpha} \ell(1/s)$ for some 
slowly varying function $\ell$. Further results \cite[Theorems 3--5]{Sarig06} deal with the cases  $\alpha \in (0,1]$ and
(non-Gaussian) $\alpha = 2$.  However, for Markov systems without BIP, the standard procedure 
to obtain a limit law is to pull this back from an induced, well-behaved, system 
\cite{MelTor04, Zwe07, Gouezel10}, and the same holds for the shape of the pressure function, \cite[Theorem 8]{Sarig06}.
For Markov systems without BIP, the connection between the asymptotic behaviour of pressure function 
and limit laws can be established once
each one of them is obtained via the pressure function/limit laws for the induced system 
as in~\cite[Section 5]{Sarig06}.

In this work we focus on the null recurrent potential case and
the behaviour of the pressure function 
can be related to the presence of limit laws by combining 
Theorem~\ref{thm:PbarP} with the results in Section~\ref{sec-correl}.

\subsection{Main results}

Here we give a rough outline of our results.
Suppose that $(X,f)$ is a non-uniformly hyperbolic system that allows a uniformly
hyperbolic induced map $(Y, F = f^\tau)$, and suppose that $\mu_\phi$ and $\mu_{\bar\phi}$ are
the equilibrium measures for $\phi$ and the induced potential $\bar\phi = \sum_{j=0}^{\tau-1} \phi \circ f^j$
respectively.
Sarig \cite[Theorem 8]{Sarig06}, in the case that $\mu_\phi$ is a probability measure and $P(\phi) = 0$,
gives a general relation between the pressure $P(\phi+s\psi)$ of the original system $f:X \to X$
and the pressure $P(\overline{\phi+s\psi})$ of the induced system $F:Y \to Y$: asymptotically (as $s\searrow 0$)
the two differ by a multiplicative constant $\mu_\phi(Y)$.

Our main result gives such a relation when  $\mu_\phi$ is infinite, and the induced potential $\bar\phi$ satisfies certain abstract conditions;
in particular we require a refined form of 
 $\mu_{\bar\phi}(\tau > n) = cn^{-\beta}(1+o(1))$, for $\beta \in (0,1)$. Our  abstract assumptions on $F$, $\bar\phi$ and $\bar\psi$
are formulated in Section~\ref{sec:abstsu}.  Under such assumptions, we show that there is $C > 0$ such that 
 \begin{equation}\label{eq:PbarP0}
  P(\phi + s\psi) = (CP(\overline{\phi + s\psi}))^{1/\beta} \cdot (1+o(1)) \quad \text{ as } \quad s \to 0.
 \end{equation}
The present Theorem~\ref{thm:PbarP} provides a refined form of~\eqref{eq:PbarP0} with precise error terms.
 In Section~\ref{sec:ex} we verify the set of abstract assumptions in Section~\ref{sec:abstsu}
for several examples of interest, as summarized below. In particular, in Section~\ref{sec:ex} 
we  show that potentials $\psi$ with induced version $\bar\psi$   satisfying the abstract assumptions in Section~\ref{sec:abstsu} include:
a) potentials $\psi \in L^1(\mu_\phi)$ (usually the geometric potential
 $\psi = \log|f'|$ falls in this class: see Remark~\ref{rmk-log}); b) potentials that are bounded above, but have heavy (negative) tails. 

 More generally, given the potentials $\phi+s\psi$, one is led to questions about the limit behaviour 
 of the measures $\mu_{\phi+s\psi}$ as $s \to 0$. Under a further abstract assumption on $\psi$ formulated in Section~\ref{sec-LP}, 
 we show that the induced measures
 $\mu_{\overline{\phi+s\psi}}$ tend to $\mu_{\bar\phi}$ in a sense stronger than the 
 weak${}^*$ topology; this is the content of  Lemma~\ref{lemma-contmeas}. Since $\mu_\phi$ is infinite, the expectations 
 $\E_{\overline{\phi+s\psi}}(\tau)$ of the inducing time $\tau$ tend to infinity as $s \to 0$ and
 in Lemma~\ref{lemma-tauexpec} we estimate the speed at which this happens.
 In the spirit of quasistatic dynamical systems (see e.g.\ \cite{DS15} and references therein), 
 we show in Theorem~\ref{thm-mix} that the correlation coefficients
 $\rho_{n,s}(v,w) := \int_Y v \ w \circ f^n \, d\mu_{\phi+s\psi}$ behave asymptotically the 
 same as in the case of  the null recurrent $\phi$,
 as $n \to \infty$, $s \to 0$ simultaneously, provided $s = o(n^{(\beta-1)/\beta -\eps})$.

 We give various examples of applications of our theory in Section~\ref{sec:ex}.  The main task there is to show that the abstract conditions formulated earlier in the paper hold.  
 Some of these examples are well-studied in infinite ergodic theory, so we can take much of the theory `off the shelf', but we also give some new classes of examples and develop the required theory here.
  
 An important class of maps with induced system with polynomial tails are
 so-called AFN-maps, i.e., non-uniformly expanding interval maps with neutral fixed points.
 The standard example here is the Pomeau-Manneville map (possibly non-Markov, see \eqref{eq:nMPM}),
 and we improve the result of Lopes \cite{L93} in this case:
 If the parameter $\alpha > 1$ (infinite measure case), then there exists
 $C > 0$ (made explicit in the proof of Proposition~\ref{prop:psilogtau}) such that
$$
P(\phi_t) = C(1+o(1))(1-t)^\alpha \qquad  \text{ as } t \nearrow 1.
$$
We also show in Proposition~\ref{prop:psilogtau2} that Theorem~\ref{thm:PbarP} applies to a class of 
unimodal Misiurewicz maps with flat critical points, i.e., the critical $c$ is not recurrent
and all the derivatives $D^nf$ vanish at $c$.

Another interesting set of examples are Fibonacci unimodal maps with piecewise linear branches,
which have not been greatly studied in this context.
These have `almost' first return maps without full branches (the first return maps themselves are studied in isolation in Proposition~\ref{prop:psilogtau3})
and a lacunary sequence of inducing times; hence there is no regular variation of tails and Theorem~\ref{thm:PbarP} 
does not apply.  Therefore we need to weaken the assumption $\mu_{\bar\phi}(\tau > n) = cn^{-\beta}(1+o(1))$, for $\beta \in (0,1)$ used in Theorem~\ref{thm:PbarP}.
Under an appropriate assumption, we prove Theorem~\ref{thm:PbarP2} which gives us a similar statement to 
 \eqref{eq:PbarP0}, but with $C$ replaced by some $C_1^*$ for an upper bound and $C_2^*$ for a lower bound. 
Once the tail estimates for these Fibonacci maps are proved in Theorem~\ref{thm:Fibo}, we obtain the asymptotics of the pressure function in Proposition~\ref{prop:fibpress}.
\\[3mm]
{\bf Acknowledgements:} The authors  would like to thank the Erwin Schr\"odinger Institute
where this paper was initiated during a ``Research in Teams'' project.  
MT and DT would like to thank the University of Exeter and the
 University of St Andrews, respectively, for their hospitality during mutual
short visits.

\section{Preliminaries}\label{sec:prelim}

{\bf Notation:}
We use notation $a_n \sim b_n$ if $a_n/b_n \to 1$, and
$a_n \asymp b_n$ if there is a constant $C > 1$ such that
$1/C \leq \liminf_n a_n/b_n \leq \limsup_n a_n/b_n \leq C$.
Also, we use ``big O'' and Vinogradov $\ll$ notation interchangeably, writing
$a_n=O(b_n)$ or $a_n\ll b_n$ as $n\to\infty$ if there is a constant
$C>0$ such that $a_n\le Cb_n$ for all $n\ge1$.  
Finally we write $a_n=o(b_n)$ if $\lim_n a_n/b_n=0$.

\subsection{Thermodynamic formalism for Markov maps}\label{sec:thermo}
Let $F: Y \to Y$ be a Markov map with countable Markov partition 
$\{ Y_i \}_{i \in \N}$.  That is, $Y=\cup_i Y_i$ and for each $i, j\in \N$, either  $X_i\cap F(X_j)=\emptyset$ or   $X_i\subset F(X_j)$.  Further, we assume that this satisfies the {\em big image and preimage} property (BIP), 
i.e., there is $N \in \N$ such that for all $i \in \N$ there are $a,b \in \{ 1, \dots, N\}$
such that $F(Y_a) \cap Y_i \neq \emptyset \neq F(Y_i) \cap Y_b$.
In particular, Markov systems with full branches ($F(Y_i)=Y$ for all $i$) have the BIP property.

Given $\phi:Y \to \R$, let $V_n(\phi) = \sup_{C_n} \sup_{x,y \in C_n} |\phi(x)-\phi(y)|$ be the 
\emph{$n$th variation}, 
where the first supremum is over all dynamically defined $n$-cylinders $C_n$, where this is of the form $C_n=[x_0, \ldots, x_{n-1}]=\{y\in Y: F^i(y)\in Y_{x_i} \text{ for } i=0, \ldots, n-1\}$.  
If $\sum_{n\ge 2} V_n(\phi)<\infty$ then we say that $\phi$ has \emph{summable variations} and write $\phi \in SV$.  
For such a potential,
define partition functions
\begin{equation}\label{eq:Zn}
Z_n(\phi, Y_i):=\sum_{x \in Y_i, F^nx = x} e^{S_n\phi(x)} 
\quad \text{ and } \quad
Z_n^*(\phi, Y_i) := \sum_{\stackrel{x \in Y_i, F^{n}x=x,}{F^kx \notin Y_i \ \mbox{\tiny for}\ 0< k < n}} 
\hspace{-10mm} e^{S_n\phi(x)},
\end{equation}
where $S_n\phi(x) = \phi(x) + \dots + \phi \circ F^{n-1}(x)$ is the $n$th ergodic sum.
The {\em Gurevich pressure}
\begin{equation}\label{eq:Pgur}
 P_G(\phi) := \lim_{n \to \infty} \frac{1}{n} \log Z_n(\phi, Y_i)
\end{equation}
exists, is independent of the state $Y_i$ (so we will drop $Y_i$ in the notation), 
and is equal to the variational pressure
from \eqref{eq:Pvar} whenever $|\sum_{Fy=x} e^{\phi(y)}|_\infty < \infty$ or BIP holds, 
see \cite[Theorem 3]{Sarig99}.

The potential $\phi$ is called {\em recurrent} if $\sum_n e^{-nP_G(\phi)} Z_n(\phi) = \infty$
and {\em transient} otherwise. If recurrent, $\phi$ is called {\em positive recurrent}
if $\sum_n n e^{-nP_G(\phi)} Z_n^*(\phi) < \infty$ and {\em null recurrent} otherwise.

A measure $m$ on $Y$ is called \emph{$\phi$-conformal} if $m(F(A))=\int_A e^{-\phi}dm$ 
whenever $A$ is measurable and $F$ is injective on $A$.  
Moreover, a measure $\nu$ on $Y$ is called \emph{conservative} if any measurable set $W\subset Y$ such that 
the sets $\{F^{-n}(W)\}_{n=0}^{\infty}$ are disjoint has $\nu(W)=0$.

Sarig \cite[Theorem 2]{Sarig01} generalises the Ruelle-Perron-Frobenius (RPF) Theorem to countable Markov shifts,
assuming that $\phi \in SV$ and $P(\phi)=\log\lambda<\infty$. Then, in the BIP setting, $\phi$ is necessarily  positive recurrent and for the Perron-Frobenius operator $(L_\phi v)(x) = \sum_{Fy=x} e^{\phi(y)} v(y)$, 
 there exists a conservative $\phi$-conformal measure $m$ and a continuous function $h$ such 
that $L_{\phi}^{*} m= \lambda m$, $L_{ \phi} h= \lambda h$ and  $ h~{ d} m < \infty$.  
We will call the measure $h\,dm$ coming from the RPF theorem, the \emph{RPF measure} $\mu_\phi$.  Here 
$\mu_\phi$ also has the {\em Gibbs property}, i.e., 
$\mu_\phi(C_n) \asymp e^{S_n\phi(x) - nP(\phi)}$
 for all $n \in \N$ and all non-empty $n$-cylinders $C_n$ and any $x\in C_n$.
Moreover,  \cite[Theorem 2]{Sarig01} implies that whenever $\mu_\phi$  has finite entropy, then it is an 
equilibrium measure for $\phi$.

If we start from a general dynamical system $f:X\to X$ and potential $\phi: X\to [-\infty, \infty)$, then this 
may not be Markov.
We extend the notion of positive/null recurrent to this case, assuming that our system `induces' to a 
Markov system (see below). Then, as in \cite{IomTod10}, $(X,f, \phi)$ 
is called \emph{recurrent} if there is a conservative $(\phi-P(\phi))$-conformal measure $m$ which comes 
from the induced system. Moreover, if  there exists a finite $f$-invariant measure $\mu\ll m$, then we say 
that $\phi$ is \emph{positive recurrent}; otherwise we say that $\phi$ is \emph{null recurrent}.

\section{Abstract set-up}\label{sec:abstsu}

The following subsection contains assumptions on our dynamics and our `base potential' $\phi$ 
which we will assume throughout the theoretical sections, i.e., Sections~\ref{sec:abstsu}--\ref{sec-correl}.

\subsection{Basic assumptions on $f$ and $\phi$}
\label{ssec:basic}

Let $f:X\to X$ and assume that $\phi:X\to [-\infty, \infty)$ is a null recurrent potential for $f$.  
We assume that $P(\phi)=0$ (otherwise replace $\phi$ with $\phi-P(\phi)$). 
We assume that $\mu_\phi$ is an infinite equilibrium measure for $(f,\phi)$ in the sense of~\cite{Sarig01}, 
so $\mu_\phi(X)=\infty$ .  As recalled below,  `infinite equilibrium measures' are known to 
be meaningful when $f$ induces, with some general (i.e., not necessarily first) return time, to a BIP Markov map. 

Fix $Y \subset X$ such that $\mu_\phi(Y)\in (0,\infty)$. 
Let $\tau:Y\to\N$ 
be a general return time and define the return map $F=f^\tau:Y\to Y$.
From here on we will assume that $F$ satisfies the BIP property and let $\mathcal{A}$ 
denote the corresponding Markov partition.
Let $\bar\phi=\sum_{j=0}^{\tau-1}\phi\circ f^j$ be the induced version of $\phi$.  We will assume that $\bar\phi\in SV$.
Since $\bar\phi$ is positive recurrent, $\mu_{\bar\phi}$ is a finite equilibrium measure for $(F,\bar\phi)$ 
(see, for instance,~\cite{Sarig01}).
It is common knowledge that if $\tau$ is a \emph{first} return time,  a sigma-finite invariant measure $\mu_\phi$ 
(finite or infinite) for the original system 
can be obtained by pulling back $\mu_{\bar\phi}$ (as in \eqref{eq:pullback}).
In order to ensure that the same holds when $\tau$ is a general return with 
$\int_Y\tau\, d\mu_{\bar\phi}=\infty$, we further assume that there exists a 1-cylinder $Y_0$ and a
reinduced time $\rho:Y_0\to\N$,
such that if ${\r}:Y_0\to\N$ is a first return to $Y_0$, then we have
\begin{equation}\label{eq-reind}
f^\tau=(f^{\r})^\rho\mbox{ with } \int_{Y_0}\rho\, d\mu_{\phi}<\infty.
\end{equation}
When $\int_Y\tau\, d\mu_{\bar\phi}=\infty$, assumption~\eqref{eq-reind} ensures that an infinite, 
sigma-finite invariant measure $\mu_\phi$ for the original system can be obtained 
by pulling back $\mu_{\bar\phi}$ (see~\cite[Theorem 2.1]{BNT} and \cite[Theorem 1.1]{Zwe05}): 
for any measurable set $A$, 
\begin{equation}\label{eq:pullback}
\mu_\phi(A) = \sum_{k \geq 0} \mu_{\bar\phi}(f^{-k}(A) \cap \{ \tau > k\}).
\end{equation}
When $\tau$ is the first return time to $Y$, \eqref{eq:pullback} gives that
$\mu_\phi(Y) = \mu_{\bar\phi}(Y)$.  As natural in an infinite measure setting, 
when $\int \tau \ d\mu_{\bar\phi} = \infty$,  we will not normalise $\mu_\phi$.

\subsection{Liftability}

Since we will be taking information from inducing schemes to learn about our original system, we need to ensure that the given inducing scheme is compatible with the relevant potentials and measures.  This is `liftability':

\begin{defn}
We call a measure for the original system \emph{liftable} if it can be obtained from an 
induced measure via \eqref{eq:pullback}. 

For a potential $\chi: X\to [-\infty, \infty)$, we say that  $(Y,F)$ is   $\chi$-\emph{liftable} if 
$P(\overline{\chi-P(\chi)})=0$.
\end{defn}

Note that if $\chi$ has an equilibrium measure $\mu_\chi$ which lifts to $(Y, F)$ as in  \eqref{eq:pullback}, 
then  $(Y,F)$ is  $\chi$-liftable.  This follows since $h(\mu_{\chi})+\int(\chi-P(\chi))~d\mu_{\bar\chi}=0$ and $ \int\tau~d\mu_{\bar\chi}<\infty$, 
so Abramov's formula, see \cite[Section 5]{Zwe05}, gives that the induced measure $\mu_{\bar\chi}$ has 
$$h(\mu_{\bar\chi})+\int\overline{\chi-P(\chi)}~d\mu_{\bar\chi}= \left(\int\tau~d\mu_{\bar\chi}\right)\left(h(\mu_{\chi})+\int\left(\chi-P(\chi)\right)~d\mu_{\bar\chi}\right)=0.$$
Since we always have $P(\overline{\chi-P(\chi)})\le 0$, see e.g.\ Lemma~\ref{lem:induced p nonpos} below, this means that
$P(\overline{\chi-P(\chi)})=0$, so $(Y,F)$ is  $\chi$-liftable.
 
\subsection{Assumptions on tails of $\mu_{\bar\phi}$ and on $\psi$: (H1) and (H2)}\label{sec:psi}

Estimates on the tails $\mu_{\bar\phi}(\tau>n)$ are essential for our results.
For several arguments in this work, we require:
\begin{itemize}
\item[\bf (H1)(a)] $C_2n^{-\beta}\le \mu_{\bar\phi}(\tau>n)\le C_1n^{-\beta}$ with $\beta\in (0,1)$ 
and $C_1\geq C_2 > 0$. 
\end{itemize}
In particular, (H1)(a) implies that $\int_Y \tau\, d\mu_{\bar\phi}=\infty$.

We let $L_{\mu_\phi}$ and $R_{\mu_ {\bar\phi}}$ the normalised transfer operators defined 
w.r.t.\ $\mu_\phi$ and $\mu_ {\bar\phi}$, respectively.
Recall that the transfer operator $R_{\mu_{\bar\phi}}:L^1(\mu_{\bar\phi})\to L^1(\mu_{\bar\phi})$ is given
by 
$$
\int_Y R_{\mu_{\bar\phi}}v\,w\,d\mu = \int_Y v\,w\circ F\,d\mu \ \text{ for } w\in L^\infty(\mu_{\bar\phi}).
$$ 
A similar definition holds for $L_{\mu_\phi}$.

We are interested in the asymptotic behaviour of the pressure function $P(\phi+s\psi)$
as $s \to 0$ and a potential $\psi:X\to[-\infty, \infty)$ 
with induced version $\bar\psi=\sum_{j=0}^{\tau-1}\psi\circ f^j$
 satisfying certain assumptions (see (H2) below).  
 As we will see in Section~\ref{sec:ev}, such assumptions together with 
 good functional analytic properties of the 
induced system $(Y, F, \mu_{\bar\phi})$ (see (P1) and (P2) below), will allow us to speak 
of  the family of leading eigenvalues $\lambda(u,s)$ associated with the perturbed family of operators
\begin{equation}\label{eq-RUS}
R(u,s)v:=R_{\mu_{\bar\phi}}(e^{-u\tau}e^{s\bar\psi}v), \qquad  u \ge 0, s\in (0,\delta_0),
\end{equation}
for some $\delta_0 > 0$ 
and identify $P(\overline{\phi+s\psi-u})$ with $\log\lambda(u,s)$ for 
$u \in [0,\delta_0)$ and $s \in (0,\delta_0)$. 

The following stronger version of (H1)(a) allows for a good understanding  
of the asymptotics of $P(\overline{\phi+s\psi-u})$ as $u, s\to 0$ (see Corollary~\ref{cor-deriveign}).
\begin{itemize}
\item[\bf (H1)(b)] 
$\mu_{\bar\phi}(\tau(y)>n)=c n^{-\beta}+ b(n)+H(n)$, 
for some $\beta\in(0,1)$, $c > 0$ and some function $b$ such that $nb(n)$ has bounded variation 
and $b(n)=O(n^{-2\beta})$, and $H(n)=O(n^{-\xi})$ with $\xi>2$.
\end{itemize}

Throughout, we let $R(u)v:= R_{\mu_{\bar\phi}}(e^{-u\tau}v)$
and for each $n\ge1$, we define
$R_n:L^1(\mu_{\bar\phi})\to L^1(\mu_{\bar\phi})$ by $R_nv=R_{\mu_{\bar\phi}}(1_{\{\tau=n\}}v)$.
It is easily verified that $R(u)=\sum_{n=1}^\infty R_n e^{-un}$.

With the above quantities defined we can recall some functional analytic properties of the 
induced Markov  BIP map $(Y, F, \mathcal{A}, \mu_{\bar\phi})$.
Under the assumptions of Section~\ref{ssec:basic} and (H1)(a), there is a Banach space  $\B$ of bounded piecewise H{\"o}lder functions compactly 
embedded in $L^\infty(\mu_{\bar\phi})$ 
under which the properties (P1) and (P2) below hold; see~\cite{AaronsonDenker01} for (P2) and~\cite{Sarig02} for (P1). 
The norm on $\B$ is defined by $\| v \|_\B = |v|_\theta + |v|_\infty$,
where $|v|_\theta = \sup_{a \in \mathcal{A}} \sup_{x \neq y \in a} |v(x)-v(y)| / d_\theta(x,y)$,
where $d_\theta(x,y) = \theta^{s(x,y)}$ for some $\theta\in (0,1)$,
and $s(x,y) = \min\{ n : F^n(x) \text{ and } F^n(y) \text{ are in different elements of } \mathcal{A}\}$ 
is the separation time.

\begin{itemize}
\item[\bf (P1)] For all $n\ge1$,
$R_n:\mathcal{B}\to\mathcal{B}$ is a bounded linear 
operator with $\sum_{j>n}\|R_j \|=O(n^{-\beta})$ with $\beta$ as in (H1)(a).
\end{itemize}

We notice that $u\mapsto R(u)$ is an analytic family of bounded linear 
operators on $\mathcal{B}$ for $u> 0$.  This implies that there exists
$\delta_0>0$  such that the  family of associated eigenvalues $\lambda(u)$, 
$u\in B_{\delta_0}(0)$, is well defined and analytic.

Since $R(0)=R_{\mu_{\bar\phi}}$ and $\mathcal{B}$ contains constant functions, 
$1$ is an eigenvalue of $R(0)$. 
\begin{itemize}
\item[\bf (P2)] The eigenvalue $1$ is simple and isolated
in the spectrum of $R(0)$.
\end{itemize}

Finally, we formulate our assumptions on the induced version $\bar\psi:Y\to\R$ of 
the potential $\psi:X\to[-\infty, \infty)$.

\begin{itemize}
\item[\bf (H2)] The induced potential $\bar\psi\in SV$ and $(Y, F)$ is 
$(\phi+s\psi)$-liftable for $s\in [0, \delta)$ for some $\delta \in (0,\delta_0)$
with $\delta_0$ from \eqref{eq-RUS}.
Moreover, one of the following holds:
\begin{itemize}
\item[\bf (i)] $|\bar\psi|\in L^1(\mu_{\bar\phi})$ and there exist $\eps>0$
such that for all $s\in (0,\delta)$,
\[
\|R_n (e^{s\bar\psi}-1)\|\ll s\,\|R_n\|\, n^{\eps};
\]
\item[\bf (ii)] $\bar\psi(x)=C'-\psi_0$, where $0 \leq \psi_0(x) \le C \tau^\gamma(x)$, 
for $C', C > 0$ and $\gamma\in [\beta,1]$. Also $\psi_0$ is piecewise  H{\"o}lder 
(with exponent $\theta$ as in the definition of the Banach space $\B$).
\end{itemize}
\end{itemize}

\begin{rmk}
\label{rmk-log} In our examples, we will make certain assumptions on $\psi$ and show that they imply (H2). 
In particular when $\psi$ is the natural potential $\log|f'|$, then this will give 
$|\bar\psi|=|\log|F'|| \leq \xi\log\tau$ for some $\xi > 0$, and here (H2)(i) will apply.
Indeed, for every $s \in (0, \eps/(2\xi)]$ we have $\xi n^{u\xi} \log n < n^\eps$ for 
$n$ sufficiently large and all $0 < u \leq s$.
It follows that
$$
e^{s \xi \log n}-1 = n^{s \xi} - 1 = \int_0^s \frac{d}{du} n^{u\xi} \, du
= \int_0^s \xi \ n^{u\xi}\ \log n  \, du < \int_0^s n^{\eps} \, du = s n^\eps,
$$
as required.
\end{rmk}

It may be useful at this stage to note that conditions (H1)(a) (and thus (P1)) and (H2) imply that $\phi+s\psi$ is recurrent, as shown below in Lemma~\ref{lem:rec}.

\section{Results on the asymptotics of the pressure function in the abstract set-up}
\label{sec:presfnc}

In this section we obtain the asymptotics of the pressure function 
$P(\phi+s\psi)$ under (H1)(a) or (H1)(b), in the setting of Section~\ref{ssec:basic}.
Note that if $\sup\psi<\infty$ then $P(\phi+s\psi) <\infty$. 

The first result below gives the higher order asymptotics under the strong assumption (H1)(b).  

\begin{thm}\label{thm:PbarP} 
Assume $P(\phi)= 0$, $\bar\phi \in SV$ and \eqref{eq-reind}.
Let $\psi:X\to[-\infty, \infty)$ be bounded from above (we allow $\psi$ to be unbounded from below)
such that $P(\phi+s\psi) > 0$ for $s > 0$
and assume that the induced version $\bar\psi$ (of $\psi$) satisfies (H2). 
Assume that\footnote{As in  Lemma~\ref{lem:flat_pres}, in many natural settings it can be shown 
that $P(\phi+s\psi)=o(s)$, and thus this is a very mild assumption.}  
there exists $a>0$ such that $s\ll (P(\phi+s\psi))^a$, as $s\to 0$.

Assume  (H1)(b) and recall that (P1) and (P2) hold. Set $C=(c \beta\Gamma(1-\beta))^{-1}$ with $c>0$ as in (H1)(b).
Then, there exists a constant $C_0>0$ such that\footnote{The precise form of this constant  is given inside the proof.} 
 $$
P(\phi+s\psi) = \left(C P(\overline{\phi+s\psi})\right)^{1/\beta}(1+Q(s)) \text{ as } s \to 0,
$$
where $Q(s) = - C_0 C^{1/\beta} P(\overline{\phi+s\psi})^{(1-\beta)/\beta} + 
O(P(\overline{\phi+s\psi})^{\beta a-\eps})$, for arbitrarily small $\eps>0$.
\end{thm}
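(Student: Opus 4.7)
The plan is to transfer the theorem to the induced system via liftability, extract a precise two-parameter asymptotic expansion of the leading eigenvalue $\lambda(u,s)$ of the perturbed transfer operator $R(u,s)v=R_{\mu_{\bar\phi}}(e^{-u\tau}e^{s\bar\psi}v)$ on $\B$ as $(u,s)\to(0,0)$, and then solve the implicit equation $\lambda(u,s)=1$ by dominant balance.

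First, I would recall that (P1), (P2) and Sarig's RPF theorem applied to the analytic family $R(u,s)$ give $\log\lambda(u,s)=P(\overline{\phi+s\psi-u})$ for $(u,s)$ in a sufficiently small neighbourhood of the origin; this identification is already noted in the paragraph following \eqref{eq-RUS}. Combined with the $(\phi+s\psi)$-liftability from (H2) and the normalization $P(\phi)=0$, this yields the two key identifications
\[
P(\overline{\phi+s\psi})=\log\lambda(0,s),\qquad u=P(\phi+s\psi)\ \iff\ \lambda(u,s)=1,
\]
so the theorem reduces to solving an implicit equation for $\lambda(u,s)$.

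Second, I would invoke Corollary~\ref{cor-deriveign} (whose purpose is precisely to convert (H1)(b) together with (P1), (P2) and (H2) into a refined asymptotic expansion of $\lambda(u,s)$) to obtain, schematically,
\[
\log\lambda(u,s)=\log\lambda(0,s)-C^{-1}u^\beta\bigl(1+O(u^\beta)\bigr)+\text{(mixed $u$-$s$ error)},
\]
with $C=(c\beta\Gamma(1-\beta))^{-1}$. The leading $u^\beta$ term is the Karamata/Tauberian contribution of the heavy tail $cn^{-\beta}$ to $\int(1-e^{-u\tau})\,d\mu_{\bar\phi}$; the $O(u^{2\beta})$ secondary correction aggregates the $b(n)=O(n^{-2\beta})$ and $H(n)=O(n^{-\xi})$ pieces of (H1)(b); and the mixed $u$-$s$ error is controlled either via the operator bound $\|R_n(e^{s\bar\psi}-1)\|\ll s\|R_n\|n^\eps$ in (H2)(i), or, in (H2)(ii), via the splitting $\bar\psi=C'-\psi_0$ provided there together with the tail estimate from (H1)(b).

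Finally, imposing $\log\lambda(u,s)=0$ with the ansatz $u=(CP(\overline{\phi+s\psi}))^{1/\beta}(1+Q(s))$, the leading balance forces $u^\beta=CP(\overline{\phi+s\psi})(1+o(1))$, and expanding $(1+Q)^\beta=1+\beta Q+O(Q^2)$ pins down the first correction $Q(s)=-C_0 C^{1/\beta}P(\overline{\phi+s\psi})^{(1-\beta)/\beta}$ from the coefficient of $u^{2\beta}$, with $C_0$ made explicit by the expansion. The hypothesis $s\ll (P(\phi+s\psi))^a$, combined with the leading-order relation $P(\phi+s\psi)\asymp P(\overline{\phi+s\psi})^{1/\beta}$ just obtained, then converts the mixed-term contribution into the stated $O(P(\overline{\phi+s\psi})^{\beta a-\eps})$ remainder. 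I expect the main obstacle to lie in Corollary~\ref{cor-deriveign} itself: since $\int\tau\,d\mu_{\bar\phi}=\infty$, the map $u\mapsto\lambda(u,0)$ fails to be $C^1$ at $u=0$, so standard analytic perturbation theory does not apply and the expansion must be built by combining the spectral-gap input of (P2) with Tauberian analyses of $\sum_n e^{-un}\|R_n\|$ and of $\sum_n e^{-un}\|R_n(e^{s\bar\psi}-1)\|$; case (H2)(ii), where $\bar\psi$ has heavy negative tails and lies outside $L^1(\mu_{\bar\phi})$, requires separate bookkeeping and is the most delicate point.
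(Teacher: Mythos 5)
Your proposal matches the paper's proof in both structure and key ingredients: identify $P(\overline{\phi+s\psi-u})$ with $\log\lambda(u,s)$, use liftability to read off $u_0=P(\phi+s\psi)$ as the solution of $P(\overline{\phi+s\psi-u_0})=0$, obtain the two-parameter expansion from Corollary~\ref{cor-deriveign} (the paper phrases this as integrating $r(u,s)=\frac{d}{du}P(\overline{\phi+s\psi-u})$ from $0$ to $u_0$, but that is exactly your $\log\lambda(u,s)-\log\lambda(0,s)$), and then solve by dominant balance using the hypothesis $s\ll(P(\phi+s\psi))^a$ to absorb the mixed error into the stated $O(P(\overline{\phi+s\psi})^{\beta a-\eps})$ term. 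One small caveat: the $u^\beta$ coefficient obtained by integrating the derivative is $c\Gamma(1-\beta)$ rather than $C^{-1}=c\beta\Gamma(1-\beta)$ as you wrote, but this same slip appears in the paper's own display \eqref{eq:PbarP}, so your bookkeeping is faithful to the source even if not to the arithmetic.
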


The next result gives the higher order asymptotics under the mild assumption (H1)(a).
\begin{thm}\label{thm:PbarP2} 
Assume the setting of Theorem~\ref{thm:PbarP} with (H1)(a) instead of (H1)(b). Let $C_1, C_2>0$ as 
in (H1)(a) and set $C_1^*=(C_1 \beta\Gamma(1-\beta))^{-1}$, $C_2^*=(C_2 \beta\Gamma(1-\beta))^{-1}$
Then
 $$
\left(C_2^* P(\overline{\phi+s\psi})\right)^{1/\beta}(1+Q(s)) \le P(\phi+s\psi) 
\le \left(C_1^* P(\overline{\phi+s\psi})\right)^{1/\beta}(1+Q(s)) \text{ as } s \to 0,
$$
where $Q(s)\to 0$, as $s\to 0$.
\end{thm}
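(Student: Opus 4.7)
The approach is to adapt the proof of Theorem~\ref{thm:PbarP}, replacing the sharp Karamata-type expansion of $1-\lambda(u,0)$ forced by (H1)(b) with the two-sided bounds available under (H1)(a). As in Theorem~\ref{thm:PbarP}, the key tool is the family of operators $R(u,s)$ from \eqref{eq-RUS}; by (P2), the leading eigenvalue $\lambda(u,s)$ is well-defined and jointly analytic on a neighbourhood of $(0,0)$. Liftability in (H2) identifies $P(\overline{\phi+s\psi-u}) = \log\lambda(u,s)$, so that $u(s):=P(\phi+s\psi)$ is precisely the solution of $\lambda(u(s),s)=1$; the theorem reduces to inverting this scalar equation to express $u(s)$ in terms of $p(s):=P(\overline{\phi+s\psi})=\log\lambda(0,s)$.

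The first step is to establish the two-sided squeeze
$$
C_2\,\beta\,\Gamma(1-\beta)\, u^\beta\, (1+o(1)) \;\le\; 1 - \lambda(u,0) \;\le\; C_1\,\beta\,\Gamma(1-\beta)\, u^\beta\, (1+o(1)) \quad\text{as } u\searrow 0.
$$
Under (H1)(b), Theorem~\ref{thm:PbarP} derives such a statement with a single constant $c$; under (H1)(a) the same spectral-perturbation identity expresses $1-\lambda(u,0)$ through $\sum_n (1-e^{-un})$ against the tail distribution of $\tau$, and applying Karamata's Tauberian theorem separately to the upper and lower tail bounds $C_2 n^{-\beta}\le \mu_{\bar\phi}(\tau>n)\le C_1 n^{-\beta}$ produces the squeeze above. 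This is really the only place where the hypothesis weakens.

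The second step combines the above with the $s$-perturbation in exactly the manner of the proof of Theorem~\ref{thm:PbarP}. Analyticity of $s\mapsto\lambda(0,s)$ at $0$ gives $1-\lambda(0,s) = -p(s) + o(p(s))$ as $s\searrow 0$, while the cross-difference $\lambda(0,s)-\lambda(u,s)$ coincides with $1-\lambda(u,0)$ up to an error that is $o(u^\beta)$ uniformly in $s$ small; the uniformity is guaranteed by (H2) and the norm bounds on $\partial_s R(u,\cdot)$ on $\B$ used in Theorem~\ref{thm:PbarP}. Writing $\lambda(u(s),s)=1$ as $1-\lambda(0,s) = \lambda(0,s)-\lambda(u(s),s)$ and substituting the squeeze yields
$$
C_2\,\beta\,\Gamma(1-\beta)\, u(s)^\beta (1+o(1)) \;\le\; p(s) \;\le\; C_1\,\beta\,\Gamma(1-\beta)\, u(s)^\beta (1+o(1)),
$$
and solving for $u(s)$ using the definitions $C_i^* = (C_i\,\beta\,\Gamma(1-\beta))^{-1}$ gives the stated two-sided bound on $P(\phi+s\psi)$, with $Q(s)\to 0$ absorbing all the $o(1)$ factors.

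The main obstacle, and the reason for the weaker conclusion, is that (H1)(a) does not pin down the leading constant in the expansion of $1-\lambda(u,0)$, so the argument cannot be upgraded to an equivalence: only a power-law sandwich survives. The delicate bookkeeping point is that the $o(1)$ errors coming from Karamata must be combined uniformly with those coming from the analytic $s$-perturbation, so as to yield a single error factor $1+Q(s)$ with $Q(s)\to 0$, rather than a sandwich that widens as $s\to 0$. Here the hypothesis $s\ll P(\phi+s\psi)^a$ plays the same role as in Theorem~\ref{thm:PbarP}, preventing $p(s)$ from decaying so fast that the $o(1)$ corrections dominate the power-law term in the inversion.
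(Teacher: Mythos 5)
The high-level skeleton of your proposal matches the paper's (two-sided tail bounds feed into eigenvalue asymptotics for $R(u,s)$, liftability gives $P(\phi+s\psi) = u(s)$ as the solution of $\lambda(u,s)=1$, then invert), but you take a different computational route from the paper and that route has a real gap. The paper does not manipulate differences of eigenvalues directly. Instead it sets $r(u,s)=\frac{d}{du}P(\overline{\phi+s\psi-u})$, obtains a two-sided estimate on $r(u,s)$ from Corollary~\ref{cor-deriveign2} (which goes back to Proposition~\ref{prop-deriveign2}), and integrates $r$ from $0$ to $u_0=P(\phi+s\psi)$. The reason this detour through the derivative matters is precisely where your argument breaks.

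Two concrete issues. First, $\lambda(u,s)$ is not jointly analytic near $(0,0)$: $u\mapsto R(u)$ is analytic only for $u>0$ (the $\beta\in(0,1)$ tail produces a branch-point-type singularity at $u=0$, which is the whole point of Proposition~\ref{prop-deriveign2}), and the dependence on $s$ is only $C^{\beta-\eps}$ (Lemma~\ref{lemma-rus}). Your identity $1-\lambda(0,s)=-p(s)+o(p(s))$ does not actually need analyticity, so that step survives, but the analyticity claim should be dropped.

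Second, and more seriously, the claim that $\lambda(0,s)-\lambda(u,s)$ coincides with $1-\lambda(u,0)$ up to an error $o(u^\beta)$ is not justified by the magnitude bounds available. Using the paper's decomposition $1-\lambda(u,s)=\Psi(u)+\Pi(s)-W(u,s)-V(u,s)$, the discrepancy you are bounding is $-W(u,s)-V(u,s)+V(0,s)+V(u,0)$. The available magnitude estimates are $|W(u,s)|\ll u^{\beta/2}s^{\beta/2}$ (H\"older) and $|V(u,s)|\ll(u^\beta+s^{\beta-\eps})^2$. Evaluating these at $u=u(s)$ with only the hypothesis $s\ll u^a$ for \emph{some} $a>0$ (the theorem does not assume $a\geq 1$), the $W$-term becomes $O(u^{\beta(1+a)/2})$, which is larger than $u^\beta$ when $a<1$; the $V$-term is $O(u^{2a(\beta-\eps)})$, which exceeds $u^\beta$ when $a$ is small. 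So for small $a$ your error does not merely fail to be $o(u^\beta)$, it dominates the main term, and the squeeze falls apart. The paper avoids this because it never uses the raw magnitude bound on $W$ and $V$: it bounds $\frac{d}{du}W(u,s)$ and $\frac{d}{du}V(u,s)$ by $s^\eps u^{\beta-\eps-1}(u^\beta+s^{\beta-\eps})$, integrates from $0$ to $u_0$, and the free parameter $\eps>0$ can then be taken small enough (relative to $a$) to make the integrated error $o(u_0^\beta)$ for \emph{any} $a>0$. That flexibility in $\eps$ is what the hypothesis $s\ll (P(\phi+s\psi))^a$ is actually buying, and it is not accessible from the magnitude bounds alone. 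To repair your argument you would essentially have to reintroduce the derivative-and-integrate step, at which point it reduces to the paper's proof.
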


Note that under the two different forms of behaviour given in (H2), one can derive further information on the asymptotics of $s\mapsto P(\overline{\phi+s\psi})$, see Remark~\ref{rmk-H2} below.

The proofs of Theorems~\ref{thm:PbarP} and \ref{thm:PbarP2} are given in 
Section~\ref{ssec:main_thm_pfs}, following the proofs of some more technical results in 
Section~\ref{ssec:tech props proofs}.

\section{Family of eigenvalues associated with $R(u,s)$ and asymptotics of the induced pressure}
\label{sec:ev}

In this section we first give results on the asymptotics of $P(\overline{\phi+s\psi-u})$, 
as $u, s \to 0$, via the asymptotic behaviour of family of eigenvalues associated with 
$R(u,s)$ (defined in~\eqref{eq-RUS}): see Corollary~\ref{cor-deriveign}.
We first justify that this family of eigenvalues associated with $R(u,s)$ is well defined (and continuous) 
in a neighbourhood of $(0,0)$.
We start with the following continuity properties of $R(u)$ and $R(u,s)$.

The first result below for $R(u)$  is standard (see, for instance,~\cite[Lemma 3.1]{Gouezel04}). 
The second is an analogous version for $R(u,s)$ obtained under (H2).

\begin{lemma}\label{lemma-P1}
Assume (H1)(a) and recall that (P1) holds, and take $\delta_0$ as in \eqref{eq-RUS}.
Then there is a constant $C>0$ such that for all $u\in (0, \delta_0)$,  
$\|R(u)-R(0)\|\le C  u^\beta$.
Moreover, the same estimates  are inherited by the
families $\lambda(u)$ and $v(u)$, where defined.
\end{lemma}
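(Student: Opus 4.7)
The plan is to write $R(u)-R(0)=\sum_{n\ge 1} R_n(e^{-un}-1)$ using the identity $R(u)=\sum_{n\ge 1}R_n e^{-un}$, and then estimate the operator-norm of this series by splitting at $N=\lfloor 1/u\rfloor$. On the head $n\le N$ one has $|e^{-un}-1|\le un$, while on the tail $n>N$ one has the trivial bound $|e^{-un}-1|\le 1$. This reduces everything to two bounds: $u\sum_{n\le N} n\|R_n\|$ for the head and $\sum_{n>N}\|R_n\|$ for the tail.

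The tail is immediate from (P1), which gives $\sum_{n>N}\|R_n\|=O(N^{-\beta})=O(u^\beta)$. For the head, I would use Abel summation: setting $T_n:=\sum_{j>n}\|R_j\|$, so that $\|R_n\|=T_{n-1}-T_n$, one gets
\[
\sum_{n=1}^{N} n\|R_n\| \;=\; T_0 + \sum_{n=1}^{N-1} T_n - N T_N,
\]
and since $T_n=O(n^{-\beta})$ with $\beta\in(0,1)$, both sums on the right are $O(N^{1-\beta})$. Multiplying by $u$ gives $u\sum_{n\le N}n\|R_n\|=O(u\cdot u^{-(1-\beta)})=O(u^\beta)$. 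Adding the two contributions yields $\|R(u)-R(0)\|\le Cu^\beta$, as required.

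For the estimates on $\lambda(u)$ and $v(u)$, I would appeal to the standard analytic-perturbation machinery (Kato): by (P2) the eigenvalue $1$ of $R(0)$ is simple and isolated, so the associated spectral projection $\Pi(u)=\frac{1}{2\pi i}\oint (zI-R(u))^{-1}\,dz$ (integrated over a small circle separating $1$ from the rest of $\mathrm{spec}(R(0))$) depends Lipschitz-continuously on $R(u)$ in operator norm, and thus yields
\[
|\lambda(u)-1| \;\ll\; \|R(u)-R(0)\| \;=\; O(u^\beta),
\qquad
\|v(u)-v(0)\|_{\mathcal B} \;\ll\; \|R(u)-R(0)\| \;=\; O(u^\beta),
\]
for $u\in(0,\delta_0)$ after possibly shrinking $\delta_0$. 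No part of the argument is really an obstacle: the only mildly delicate step is the Abel-summation bound, which must use that $\beta<1$ so that $\sum_{n\le N} T_n$ diverges at the sharp rate $N^{1-\beta}$; once that is in place, the head and tail contributions balance at $N\asymp 1/u$ to produce exactly the exponent $u^\beta$.
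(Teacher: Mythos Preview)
Your argument is correct and is precisely the standard one: the paper does not give a proof of this lemma but simply states that it ``is standard (see, for instance,~\cite[Lemma~3.1]{Gouezel04})'', and your splitting at $N\asymp 1/u$ combined with Abel summation on the head and the tail bound from (P1) is exactly the computation behind that reference. The transfer of the $O(u^\beta)$ modulus of continuity to $\lambda(u)$ and $v(u)$ via the Riesz spectral projection and (P2) is likewise the intended mechanism (the paper invokes Kato~\cite{Kato} for this elsewhere), so there is nothing to add.
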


\begin{lemma}\label{lemma-rus} 
Assume (H1)(a) and recall that (P1) holds. Suppose that (H2) holds for some $\delta\le \delta_0$. 
Then there exists $C>0$ such that
for all $u>0$, for all $s\in (0,\delta)$ and for any $\eps<\beta$,
\[
\|R(u,s)-R(u,0)\|\le C\,s^{\beta-\eps}.
\]
\end{lemma}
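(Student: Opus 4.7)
The starting point is the series expansion
\[
R(u,s) - R(u,0) \;=\; R_{\mu_{\bar\phi}}\bigl(e^{-u\tau}(e^{s\bar\psi}-1)\,\cdot\,\bigr) \;=\; \sum_{n\ge 1} e^{-un}\, R_n\bigl((e^{s\bar\psi}-1)\,\cdot\,\bigr).
\]
Since $e^{-un}\le 1$ we may drop these factors for an upper bound, making the final estimate automatically uniform in $u>0$. The plan is then to handle the two alternatives of (H2) separately. Writing $T_n := \sum_{j\ge n}\|R_j\|$, condition (P1) gives $T_n \ll n^{-\beta}$, and a standard summation by parts (using $\|R_n\| = T_n - T_{n+1}$) will be the workhorse in both cases.

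In case (H2)(i) the hypothesis $\|R_n(e^{s\bar\psi}-1)\|\ll s\,\|R_n\|\,n^\eps$ plugs in directly, so that $\|R(u,s)-R(u,0)\|\ll s\sum_{n\ge 1} n^\eps \|R_n\|$. Summation by parts converts this to $\ll s\sum_n n^{\eps-1}T_n \ll s\sum_n n^{\eps-1-\beta}$, which is finite provided $\eps<\beta$. This already delivers the much stronger bound $O(s)$, and in particular $O(s^{\beta-\eps})$.

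In case (H2)(ii) I would decompose
\[
e^{s\bar\psi} - 1 \;=\; (e^{sC'} - 1) \;+\; e^{sC'}\,(e^{-s\psi_0}-1).
\]
The constant piece contributes $(e^{sC'}-1)R(u)$, which is $O(s)$ in norm since $R(u)$ is uniformly bounded near $u=0$ by Lemma~\ref{lemma-P1}. For the second piece, the bound $0\le \psi_0\le C\tau^\gamma$ together with the piecewise H\"older regularity of $\psi_0$ yields, using that $\B$ is a Banach algebra, the multiplier-norm estimate
\[
\bigl\|1_{\{\tau=n\}}\,(e^{-s\psi_0}-1)\bigr\|_\B \;\ll\; \min(sn^\gamma,\,1).
\]
Hence $\|R(u,s) - R(u,0)\| \ll \sum_n \|R_n\|\min(sn^\gamma,1)$, and I split this sum at $N\asymp s^{-1/\gamma}$: the tail $\sum_{n>N}\|R_n\|= T_{N+1}\ll N^{-\beta} = s^{\beta/\gamma}$ by (P1), while summation by parts on the head yields $s\sum_{n\le N}n^\gamma\|R_n\| \ll s\,N^{\gamma-\beta} = s^{\beta/\gamma}$ (with a logarithmic correction only in the marginal subcase $\gamma=\beta$). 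Since $\gamma\le 1$ we have $s^{\beta/\gamma}\le s^\beta$, and any log factor is absorbed into $s^{\beta-\eps}$ for any $\eps>0$.

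The main obstacle I anticipate is the multiplier-norm estimate in (H2)(ii): one needs to verify that the H\"older seminorm of $e^{-s\psi_0}-1$ restricted to $\{\tau=n\}$ carries no hidden $n$-dependence that would spoil the $\min(sn^\gamma,1)$ bound. This boils down to using the piecewise H\"older assumption on $\psi_0$ cylinder by cylinder, together with the mean value bound $|e^{-s\psi_0(x)}-e^{-s\psi_0(y)}|\le s|\psi_0(x)-\psi_0(y)|$ to transfer the saving from the sup norm to the H\"older part. Once this is in place, the rest is a routine partial summation.
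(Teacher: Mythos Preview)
Your proof is correct and follows the same overall strategy as the paper: bound $\|R_n(e^{s\bar\psi}-1)\|$ in terms of $\|R_n\|$ and then sum using Abel/partial summation together with (P1). In case (H2)(i) your argument is identical to the paper's. In case (H2)(ii) there is a minor technical difference: the paper uses the interpolation inequality $1-e^{-x}\le x^{\beta-\eps}$ to obtain the single-power bound $\|R_n(e^{s\bar\psi}-1)\|\ll (s+s^{\beta-\eps}n^{\gamma(\beta-\eps)})\|R_n\|$ and then sums directly (the exponent $\gamma(\beta-\eps)<\beta$ makes the sum converge), whereas you keep the sharper bound $\min(sn^\gamma,1)$ and split the sum at $N\asymp s^{-1/\gamma}$. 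Your route yields the slightly stronger estimate $O(s^{\beta/\gamma})$ (with a harmless log in the borderline case $\gamma=\beta$), while the paper's avoids the split at the cost of the $\eps$-loss already present in the statement. Your concern about the H\"older seminorm of $e^{-s\psi_0}-1$ on $\{\tau=n\}$ is well-founded but harmless: the mean-value bound gives $|e^{-s\psi_0(x)}-e^{-s\psi_0(y)}|\le s|\psi_0(x)-\psi_0(y)|$, so the piecewise H\"older seminorm is $\le s|\psi_0|_\theta\ll s\le \min(sn^\gamma,1)$ for $n\ge1$, with no hidden growth in $n$.
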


\begin{proof} Recall that the norm on $\B$ is $\| v \|_\B = |v|_\theta + |v|_\infty$,
where $|v|_\theta = \sup_{a \in \mathcal{A}} \sup_{x \neq y \in a} |v(x)-v(y)| / d_\theta(x,y)$.
If (H2)(i) holds,
\begin{align*}\label{eq-Rus0}
\|R(u,s)-R(u,0)\|=\left\|\sum_{n\geq 1} R_n(e^{s\bar\psi}-1)e^{-un}\right\|
\le sC\sum_{n\geq 1} \|R_n\|n^{\beta'}
\end{align*} 
for some $C>0$ and any $\beta'<\beta$. Next, let $S_n=\sum_{j\ge n}\|R_j\|$ and note that by (P1), 
$S_n\le C n^{-\beta}$ for some $C>0$.
Since $\sum_{j\ge n}\|R_j\|=C'<\infty$, we compute that 
\begin{align*}
\sum_{n\geq 1} \|R_n\|n^{\beta'}=\sum_{n\geq 1} (S_n-S_{n-1})n^{\beta'}=&\sum_{n\geq 1}S_n(n^{\beta'}-(n-1)^{\beta'})+\sum_{j\ge n}\|R_j\|\\
&\le C\sum_{n\geq 1}n^{-\beta}n^{\beta'-1}+C'<\infty.
\end{align*}
Hence in this case the conclusion follows with a better than stated estimate. 

If (H2)(ii) holds, we recall that $\psi_0$ is a non-negative piecewise H{\"o}lder function with 
$\psi_0 \ll \tau^\gamma$, $\gamma\in [\beta,1]$.
Note that $|s\bar\psi|$ is bounded on $\{\tau=n\}$ for $n$ small
and $|e^{s\bar\psi}-1|\ll s^{\beta-\eps} n^{\gamma(\beta-\eps)}$ for $n$ large.
Using that $\psi_0$ is  piecewise H{\"o}lder, for all $x,y\in \{\tau=n\}$ with $n$ large, 
$|e^{s\bar\psi(x)}-e^{s\bar\psi(y)}|\ll s^{\beta-\eps} n^{\gamma(\beta-\eps)} d_\theta(x,y)$.
Recall $s\in (0,\delta)$ for small enough $\delta>0$. Putting the above together,  for any $\eps<\beta$ and some $C>0$,
\[
 \|R_n(e^{s\bar\psi}-1)\|\le C (s+s^{\beta-\eps} n^{\gamma(\beta-\eps)})\|R_n\|.
\]
The above inequality and a repeat  of the argument used in the case when (H2)(i) holds leads to
\[
\|R(u,s)-R(u,0)\|\le C s^{\beta-\eps}  \sum_{n\geq 1}n^{-\beta}n^{\gamma(\beta-\eps)-1},
\] 
and the conclusion follows since $\gamma(\beta-\eps)<\beta$.
\end{proof}

Recall that under (H2)(a), (P2) holds and so $\lambda(0)=1$ is a simple isolated eigenvalue in the spectrum of $R(0)$, and that 
$u\mapsto R(u)$ is analytic in $u$ for $u\in (0, \delta_0)$. 
This together with Lemma~\ref{lemma-rus} implies that $(u,s)\mapsto R(u,s)$ is analytic 
in $u$, $u>0$, and $C^{\beta-\eps}$ in $s$, $s\in(0, \delta_0)$.
Thus, there exists a family of simple eigenvalues $\lambda(u,s)$,  $s\in [0,\delta_0)$, 
analytic in $u\in (0, \delta_0)$ and  $C^{\beta-\eps}$ in $s$ with $\lambda(0,0)=\lambda(0)=1$.  
By the RPF Theorem, $\log \lambda(u, s) =P(\overline{\phi+s\psi-u}).$ In the setting of the following 
lemma we obtain $\log\lambda(P(\phi+s\psi), s)= P(\overline{\phi+s\psi-P(\phi+s\psi)})=0$ for all small $s$. 

\begin{lemma}
Assume (H1)(a) and recall that (P1) holds.  Assume (H2) and suppose that $P(\phi+s\psi)>0$ for $s > 0$. Then
 $\phi+s\psi$ is recurrent for all $s \in [0,\delta_0)$ and positive recurrent for all $s \in (0, \delta_0)$. 
\label{lem:rec}
\end{lemma}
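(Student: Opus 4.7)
The plan is to treat the cases $s=0$ and $s\in(0,\delta_0)$ separately. At $s=0$ the potential $\phi$ is null recurrent by the standing hypothesis of Section~\ref{ssec:basic}, hence in particular recurrent, and there is nothing more to prove.

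For $s\in(0,\delta_0)$, since $\psi$ is bounded from above one has $P(\phi+s\psi)\le s\sup\psi$, so after possibly shrinking $\delta_0$ we may assume $u_s:=P(\phi+s\psi)\in(0,\delta_0)$ and work inside the analytic framework of Section~\ref{sec:ev}. By (H2), $(Y,F)$ is $(\phi+s\psi)$-liftable, i.e.\ $P(\overline{\phi+s\psi-u_s})=0$. The induced potential $\bar\chi_s:=\bar\phi+s\bar\psi-u_s\tau$ lies in $SV$ and $(Y,F)$ is BIP, so Sarig's RPF theorem recalled in Section~\ref{sec:thermo} supplies a positive continuous eigenfunction $h_s\in\B$, a conservative $\bar\chi_s$-conformal measure $m_s$ on $Y$, and the finite RPF equilibrium measure $\mu_{\bar\chi_s}=h_s\,dm_s$. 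Transporting $m_s$ forward through the inducing scheme (over the tower $\{(y,k): y\in Y,\,0\le k<\tau(y)\}$) yields a conservative $(\phi+s\psi-u_s)$-conformal measure on $X$ that comes from the induced system, establishing recurrence of $\phi+s\psi$.

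For positive recurrence we need to exhibit a finite $f$-invariant measure absolutely continuous w.r.t.\ this conformal measure. The pullback formula \eqref{eq:pullback} applied to $\mu_{\bar\chi_s}$ gives an $f$-invariant measure $\mu_{\phi+s\psi}$ with total mass $\int_Y\tau\,d\mu_{\bar\chi_s}$, so the task reduces to showing $\tau\in L^1(\mu_{\bar\chi_s})$. The cleanest route uses the eigenvalue family of Section~\ref{sec:ev}: the series $R(u,s)=\sum_{n\ge1}R_n e^{s\bar\psi}e^{-un}$ converges in operator norm on $\B$ for every $u>0$ by (P1) and (H2), so $u\mapsto R(u,s)$ is analytic on $(0,\delta_0)$; combined with Lemma~\ref{lemma-rus}, (P2) and standard perturbation theory, this shows that the simple isolated eigenvalue $\lambda(u,s)$ persists analytically in $u\in(0,\delta_0)$, with $\lambda(u_s,s)=1$. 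Kato's formula for the derivative of a simple isolated eigenvalue then gives, after normalising $h_s,m_s$ so that $\int h_s\,dm_s=1$,
\[
\left.\frac{\partial}{\partial u}\lambda(u,s)\right|_{u=u_s}=-\int_Y\tau\,d\mu_{\bar\chi_s}.
\]
The left-hand side is finite by analyticity, hence so is the integral, and $\mu_{\phi+s\psi}$ is finite; this is positive recurrence.

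The main obstacle is making the perturbation-theoretic identification of $\partial_u\lambda(u,s)$ with $-\int\tau\,d\mu_{\bar\chi_s}$ rigorous, in particular verifying that the simple-isolated structure supplied by (P2) at $(0,0)$ really does propagate along the curve $u\in(0,\delta_0)$ at fixed $s$, and that the pairing between the $\B$-eigenfunction and its dual eigenmeasure produces exactly the stated derivative. A more hands-on route avoids perturbation theory altogether by exploiting the Gibbs property of $\mu_{\bar\chi_s}$ on the BIP system $(Y,F)$: any cylinder contained in $\{\tau=n\}$ carries $\mu_{\bar\chi_s}$-mass $\asymp e^{\bar\phi+s\bar\psi-u_s n}$, and summing over such cylinders the factor $e^{-u_s n}$ with $u_s>0$ dominates the polynomial growth controlled by (P1)/(H2), forcing $\mu_{\bar\chi_s}(\{\tau\ge n\})$ to decay exponentially in $n$ and giving $\tau\in L^1(\mu_{\bar\chi_s})$ directly.
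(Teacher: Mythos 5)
Your Route~2 (Gibbs property plus exponential decay) is essentially the paper's proof: the paper applies the Gibbs property to the RPF measures $\mu_{u,s}$ and $\mu_{0,s}$ for the induced potentials $\overline{\phi+s\psi-u}$ and $\overline{\phi+s\psi}$, obtaining $\mu_{u,s}(\tau=n)\asymp e^{-nu}\mu_{0,s}(\tau=n)$, and then sums $\sum_n n e^{-nu}\mu_{0,s}(\tau=n)<\infty$ for $u=P(\phi+s\psi)>0$ to conclude $\tau\in L^1(\mu_{u,s})$. Your Route~1 via Kato's derivative formula for a simple isolated eigenvalue is a genuinely different argument, and you are right to flag its soft spots: proving that the simple-isolated structure persists along $u\in(0,\delta_0)$ at fixed $s$, and that the $\B$--$\B^*$ pairing produces exactly $-\int\tau\,d\mu_{\bar\chi_s}$, requires the very analyticity of $u\mapsto R(u,s)$ on an open set containing $u_s$, which ultimately comes back to the same exponential-weight estimate; so Route~2 is more self-contained. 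One point you skip over too quickly: your sentence ``transporting $m_s$ forward through the inducing scheme yields a conservative $(\phi+s\psi-u_s)$-conformal measure on $X$'' is only immediate when $\tau$ is a first return time. When $\tau$ is a general return time (the case the paper must also cover), one needs condition~\eqref{eq-reind}, i.e.\ integrability of the reinducing time $\rho$ against $\mu_{u,s}$; the paper verifies this precisely by the estimate $\int_{Y_0}\rho\,d\mu_{u,s}\le\int_{Y_0}\tau\,d\mu_{u,s}<\infty$. You do establish this integrability later (Route~2), but the logical order in your write-up asserts conservativity of the projected conformal measure before you have justified the projection; you should reverse the order and state explicitly that $\rho\le\tau$ so the same bound covers both the projection of the conformal measure (recurrence) and the finiteness of the pulled-back invariant measure (positive recurrence).
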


\begin{proof}
By (H2), $P(\overline{\phi+s\psi-P(\phi+s\psi)})=0$, so the recurrence of $\phi+s\psi$ follows if 
we can project the (conservative) $\overline{\phi+s\psi-P(\phi+s\psi)}$-conformal measure.  
If $F$ is a first return map then one can project this measure in the natural way (for work on 
projecting conformal measures see for example the appendix of \cite{IT13}). 

If $F$ is not a first return map, then it suffices to show that \eqref{eq-reind} holds 
for this conformal measure, i.e., the reinducing time $\rho$ is integrable.  By the RPF Theorem, we have 
an RPF measure for $\overline{\phi+s\psi-u}$, which we will denote by $\mu_{u, s}$.
Note that the density of $\mu_{u, s}$ is uniformly bounded on $Y_0$ (from the BIP property), so it suffices to show 
$\int_{Y_0}\rho~d\mu_{u, s}<\infty$.  

Let $\{ Y_i \}_{i \geq 1}$ be the Markov partition of the induced map.
The Gibbs property, used for $\mu_{u,s}$ and $\mu_{0,s}$ respectively, gives
\begin{eqnarray*}
\int_{Y_0}\rho~d\mu_{u, s} &\leq & \int_{Y_0}\tau~d\mu_{u, s}=\sum_n n \mu_{u,s}(\tau=n) 
\asymp \sum_n n \sum_{\tau(Y_i)=n} e^{S_n(\phi+s\psi)-nu} \\
&=& \sum_n ne^{-nu} \sum_{\tau(Y_i)=n} e^{S_n(\phi+s\psi)} \asymp \sum_nne^{-nu}\mu_{0, s}(\tau=n) < \infty.
\end{eqnarray*}
So setting $u=P(\phi+s\psi)$, which is in $(0, \delta_0)$ for $s$ sufficiently small, we can indeed project 
our $\overline{\phi+s\psi-P(\phi+s\psi)}$-conformal measure to a 
conservative $(\phi+s\psi-P(\phi+s\psi))$-conformal measure 
\end{proof}

The first result below gives the asymptotic behaviour of $\lambda(u,s)$.
To state it we need the following

\textbf{Notation:}  Let $c$, $b(n)$ and  $H(n)$ be as given in (H1)(a). 
Let $H_1(x)=c([x]^{-\beta}-x^{-\beta})+b([x])+ H([x])$, where by $[.]$
stands for the ceiling function.
With the convention $0^{-\beta}=0$, the function $H_1(x)$ is well defined in $[0,1)$ and we set 
$c_{H}=\int_0^\infty H_1(x)\,dx$.  

\begin{prop}\label{prop-deriveign}
Assume (H1)(b) and recall that (P1)  and (P2) hold. Suppose that  (H2) holds.   
Set $\Pi(s)=\int_Y(e^{s\bar\psi}-1)\, d\mu_{\bar\phi}$. 
Then the following holds as $u,s\to 0$.
\begin{equation*}\label{eq:lambda}
1-\lambda(u,s)=c \Gamma(1-\beta) u^{\beta} + c_H u+E(u)+\Pi(s)+ D(u,s),
\end{equation*}
where  $E(u)=O(u^{2\beta})$, $D(u,s)=o(u^\beta+\int_Y(e^{s\bar\psi}-1)\, d\mu_{\bar\phi})$ and for 
arbitrarily small $\eps>0$,
\[
\frac{d}{du} E(u)=O(u^{2\beta-1}), \quad \frac{d}{du}D(u,s)\ll s^\eps\, u^{(\beta-\eps-1)}( u^\beta+s^{\beta-\eps}).
\]
\end{prop}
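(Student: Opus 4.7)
The plan is to apply first-order analytic perturbation theory (Kato) to the family $R(u,s)$, reducing $1-\lambda(u,s)$ to explicit integrals against $\mu_{\bar\phi}$, and then to evaluate these integrals via the refined tail expansion (H1)(b).

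First, by (P2) together with Lemmas~\ref{lemma-P1}--\ref{lemma-rus}, $R(0,0)=R_{\mu_{\bar\phi}}$ has $1$ as a simple isolated eigenvalue with right eigenfunction $\mathbf{1}$ and left eigenfunctional $v\mapsto\int v\,d\mu_{\bar\phi}$. Since $(u,s)\mapsto R(u,s)$ is analytic in $u\in(0,\delta_0)$ and H\"older of order $\beta-\eps$ in $s$, standard perturbation theory gives
$$
\lambda(u,s)-1 \;=\; \int[R(u,s)-R(0,0)]\mathbf{1}\,d\mu_{\bar\phi} + \Phi(u,s),
$$
where the quadratic remainder $\Phi(u,s)$ is controlled by $\|R(u,s)-R(0,0)\|_\B^2\ll u^{2\beta}+s^{2(\beta-\eps)}$ and analogously for its $u$-derivative. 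Using the duality $\int R_{\mu_{\bar\phi}}w\,d\mu_{\bar\phi}=\int w\,d\mu_{\bar\phi}$ and splitting $e^{-u\tau}e^{s\bar\psi}-1$, the first-order term equals $-\int(1-e^{-u\tau})d\mu_{\bar\phi}+\Pi(s)-\int(1-e^{-u\tau})(e^{s\bar\psi}-1)d\mu_{\bar\phi}$ (up to the sign convention adopted for $\Pi$).

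The core computation is the expansion of $\int(1-e^{-u\tau})\,d\mu_{\bar\phi}$. The layer-cake identity $\int(1-e^{-u\tau})\,d\mu_{\bar\phi}=u\int_0^\infty e^{-ut}\mu_{\bar\phi}(\tau>t)\,dt$ together with the decomposition $\mu_{\bar\phi}(\tau>t)=ct^{-\beta}+H_1(t)$ (matching (H1)(b) and the ceiling-function definition of $H_1$) yields $c\Gamma(1-\beta)u^\beta$ from the Gamma integral of the first summand, then $c_H u$ from $u\int_0^\infty H_1(t)\,dt$, leaving $E(u)=-u\int_0^\infty(1-e^{-ut})H_1(t)\,dt$. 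Splitting this last integral at $t=1/u$ and using $b(n)=O(n^{-2\beta})$, $[t]^{-\beta}-t^{-\beta}=O(t^{-\beta-1})$ and $H(n)=O(n^{-\xi})$ with $\xi>2$ gives $E(u)=O(u^{2\beta})$; in the borderline regime $\beta\le\tfrac12$ the bounded-variation hypothesis on $nb(n)$ combined with summation by parts recovers the same bound. Differentiating the identity as $\frac{d}{du}\int(1-e^{-u\tau})d\mu_{\bar\phi}=\int\tau e^{-u\tau}d\mu_{\bar\phi}$ and repeating the splitting produces $\frac{d}{du}E(u)=O(u^{2\beta-1})$.

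The remaining pieces---the mixed integral $\int(1-e^{-u\tau})(e^{s\bar\psi}-1)\,d\mu_{\bar\phi}$ and the Kato remainder $\Phi(u,s)$---are absorbed into $D(u,s)$. Under (H2)(i), the hypothesis $\|R_n(e^{s\bar\psi}-1)\|\ll s\|R_n\|n^\eps$ combined with (P1) and splitting $Y$ at $\{\tau\le 1/u\}$ (where $1-e^{-u\tau}\le u\tau$) versus $\{\tau>1/u\}$ (where $1-e^{-u\tau}\le 1$) yields $o(u^\beta+\Pi(s))$. Under (H2)(ii), the bound $|e^{s\bar\psi}-1|\ll s^{\beta-\eps}\tau^{\gamma(\beta-\eps)}$ on large $\tau$ gives the same conclusion as in the proof of Lemma~\ref{lemma-rus}. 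For the $u$-derivative one differentiates the mixed integral and combines it with the operator identity $\partial_u R(u,s)-\partial_u R(u,0)=-\sum_n nR_n(e^{s\bar\psi}-1)e^{-un}$, whose norm is bounded using (H2) and the tail decay from (P1); this is the main obstacle, since the joint analytic-in-$u$/H\"older-in-$s$ structure forces us to differentiate $\lambda$ indirectly through operator identities rather than via implicit differentiation of the eigenvalue equation, and one must carefully pair the tail estimates from (P1) with the $s$-regularity from (H2) uniformly in both parameters to land on the claimed $\frac{d}{du}D(u,s)\ll s^\eps u^{\beta-\eps-1}(u^\beta+s^{\beta-\eps})$.
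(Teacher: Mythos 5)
Your proposal follows essentially the same route as the paper's proof: reduce $1-\lambda(u,s)$ to the scalar first-order term $\int_Y(1-e^{-u\tau}e^{s\bar\psi})\,d\mu_{\bar\phi}$ plus a quadratic remainder, expand $\Psi(u)=\int_Y(1-e^{-u\tau})\,d\mu_{\bar\phi}$ via the layer-cake identity and the refined tail decomposition from (H1)(b), and absorb the cross term $W(u,s)=\int_Y(1-e^{-u\tau})(e^{s\bar\psi}-1)\,d\mu_{\bar\phi}$ and the remainder into $D(u,s)$, controlling $u$-derivatives by the operator identity $\partial_u(R(u,s)-R(u,0))=-\sum_n nR_n(e^{s\bar\psi}-1)e^{-un}$ paired with (P1).

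The one place where you gesture at, but do not quite supply, the ingredient the paper uses, is the quadratic remainder. You invoke generic first-order Kato perturbation and describe the remainder $\Phi(u,s)$ abstractly as a term with $\|\Phi\|\ll\|R(u,s)-R(0,0)\|^2$; but then you need to differentiate $\Phi$ in $u$, and you correctly flag that implicit differentiation of the eigenvalue equation is not available here because $R(u,s)$ is only H\"older in $s$. What resolves this is the explicit Gou\"ezel/Aaronson--Denker representation the paper uses: after normalising $\int_Y v(u,s)\,d\mu_{\bar\phi}=1$, the remainder is exactly $V(u,s)=\int_Y (R(u,s)-R(0))(v(u,s)-v(0))\,d\mu_{\bar\phi}$, which can be differentiated in $u$ term by term using $\|\partial_u R(u,s)\|$ and $\|\partial_u v(u,s)\|$. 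Without committing to this (or an equivalent explicit formula), the derivative bound $\frac{d}{du}D(u,s)\ll s^\eps u^{\beta-\eps-1}(u^\beta+s^{\beta-\eps})$ is asserted rather than proved. A second, minor deviation: for the scalar cross term the paper reaches for a H\"older inequality to obtain $|W(u,s)|\ll u^{\beta/2}s^{\beta/2}$, whereas you split the domain at $\{\tau\le 1/u\}$; both work, but the H\"older route is shorter and avoids case analysis.

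Everything else — the expansion $\Psi(u)=c\Gamma(1-\beta)u^\beta+c_Hu+E(u)$ with $E(u)=-u\int_0^\infty(1-e^{-ut})H_1(t)\,dt$, the splitting of $E$ at $t=1/u$, the use of the bounded-variation hypothesis on $nb(n)$ in the regime $\beta\le\tfrac12$, and the two cases of (H2) — is in line with the paper's argument (which outsources the $\Psi$-computation to~\cite{T15}).
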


An immediate consequence of the above result is:

\begin{cor}\label{cor-deriveign}
Assume the setting of Proposition~\ref{prop-deriveign}. Moreover, assume that there exists  $a>0$ such that $s=O(u^a)$ as $u\to 0$.
Then for arbitrarily small $\eps>0$, 
\begin{equation*}
\frac{d}{d u}P(\overline{\phi+s\psi-u})=-c \beta\Gamma(1-\beta) u^{\beta-1} -c_H+O(u^{2\beta-1})+O(s^\eps\, u^{2\beta-\eps-1})+O(s^{\beta-\eps/a}u^{\beta-1}).
\end{equation*}
\end{cor}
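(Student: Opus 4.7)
The plan is to differentiate the expansion from Proposition~\ref{prop-deriveign} directly in $u$, then identify $P(\overline{\phi+s\psi-u})$ with $\log\lambda(u,s)$ as justified immediately before the statement of Proposition~\ref{prop-deriveign} via the RPF theorem. Since $\Pi(s)$ does not depend on $u$, differentiating the identity
$$1-\lambda(u,s) = c\Gamma(1-\beta)u^\beta + c_H u + E(u) + \Pi(s) + D(u,s)$$
termwise yields
$$\partial_u\lambda(u,s) = -c\beta\Gamma(1-\beta)u^{\beta-1} - c_H - E'(u) - \partial_u D(u,s),$$
and the bounds $E'(u) = O(u^{2\beta-1})$ and $\partial_u D(u,s) \ll s^\eps u^{\beta-\eps-1}(u^\beta + s^{\beta-\eps})$ from Proposition~\ref{prop-deriveign} split the last error into the two contributions $O(s^\eps u^{2\beta-\eps-1})$ and $O(s^\beta u^{\beta-\eps-1})$.

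Next I would use the assumption $s = O(u^a)$ to convert the second of these into the form claimed in the statement: write
$$s^\beta u^{\beta-\eps-1} = s^{\beta-\eps/a}\bigl(s^{\eps/a}u^{-\eps}\bigr)u^{\beta-1},$$
and note that $s^{\eps/a} = O(u^\eps)$, so the bracketed factor is bounded and the piece becomes $O(s^{\beta-\eps/a}u^{\beta-1})$. To pass from $\partial_u\lambda$ to $\partial_u\log\lambda = \partial_u\lambda / \lambda$, I would expand $\lambda(u,s)^{-1} = 1 + O(1-\lambda(u,s))$; in the (H2)(i) case the pointwise inequality $|e^{s\bar\psi}-1|\le s|\bar\psi|e^{s|\bar\psi|}$ together with $\bar\psi \in L^1(\mu_{\bar\phi})$ gives $\Pi(s) = O(s)$, while in the (H2)(ii) case the pointwise bound used in the proof of Lemma~\ref{lemma-rus} gives $\Pi(s) = O(s^{\beta-\eps'})$ for arbitrarily small $\eps'>0$. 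In either case the expansion shows $1-\lambda(u,s) \to 0$ as $u,s\to 0$, and multiplying $\partial_u\lambda$ by $1 + O(1-\lambda)$ produces only corrections already captured by the stated errors: the singular factor $-c\beta\Gamma(1-\beta)u^{\beta-1}$ against $O(u^\beta)$ is absorbed into $O(u^{2\beta-1})$, and against $O(\Pi(s))$ is absorbed into $O(s^{\beta-\eps/a}u^{\beta-1})$ after applying $s = O(u^a)$ once more.

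The main obstacle I expect is just the meticulous bookkeeping of error terms with two parameters $u$ and $s$: verifying that the $\eps$ loss introduced in the passage $s^\beta \mapsto s^{\beta-\eps/a}$ is consistent across all pieces, and that the multiplicative factor $1/\lambda$ never promotes a subdominant term past the leading $u^{\beta-1}$ singularity. Apart from this, the derivation is a direct algebraic consequence of Proposition~\ref{prop-deriveign}, which is precisely why the result is presented as a corollary.
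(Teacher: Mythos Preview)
Your proposal is correct and follows essentially the same route as the paper's proof: differentiate the expansion of $1-\lambda(u,s)$ from Proposition~\ref{prop-deriveign}, use $s=O(u^a)$ to rewrite $s^\beta u^{\beta-\eps-1}$ as $s^{\beta-\eps/a}u^{\beta-1}$, and identify $P(\overline{\phi+s\psi-u})$ with $\log\lambda(u,s)$. If anything, you are more careful than the paper in handling the passage from $\partial_u\lambda$ to $\partial_u\log\lambda$: the paper simply notes $\lambda(u,s)=1+g(u,s)$ with $g\to 0$ and declares the conclusion, whereas you explicitly verify that the cross-terms from the factor $1/\lambda$ are absorbed into the stated error bounds.
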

\begin{proof}By Proposition~\ref{prop-deriveign}, $\lambda(u,s)=1+g(u,s)$, where $g(u,s)\to 0$ as $u,s\to 0$ and for arbitrarily small $\eps>0$,
\[
\frac{d}{d u}\lambda(u,s)=-c \beta\Gamma(1-\beta) u^{\beta-1} + c_H+O(u^{2\beta-1})+ O(s^\eps\, u^{2\beta-\eps-1})+O( s^{\beta}u^{\beta-\eps-1}).
\]
Clearly, $O(s^\eps\, u^{2\beta-\eps-1})=o(u^{\beta-1})$, so this gives an error term. For the term $O( s^{\beta}u^{\beta-\eps-1})$, using that
$s=O(u^a)$ for some $a>0$ and choosing $\eps<a\beta$, we have $s^{\beta}u^{\beta-\eps-1}\ll s^{\beta-\eps/ a}u^{\beta-1}$. 
The conclusion follows since $P({\overline{\phi+s\psi-u}}) = \log \lambda(u,s)$
by Lemma~\ref{lem:rec}.
\end{proof}

The next two results give the expansion and derivative in $u$ of the eigenvalue $\lambda(u,s)$
and pressure $P(\overline{\phi+s\psi-u})$  under (H1)(a) (a much weaker assumption that (H1)(b)).

\begin{prop}\label{prop-deriveign2}
 Assume (H1)(a) and recall that (P1)and (P2) hold. Suppose that (H2) holds.  
 Set $\Pi(s)=\int_Y(e^{s\bar\psi}-1)\, d\mu_{\bar\phi}$. Then as $u,s\to 0$.
\begin{equation*}
C_2 \Gamma(1-\beta) u^{\beta}(1+B(u))\le 1-\lambda(u,s)- \Pi(s)+ D(u,s)\le C_1 \Gamma(1-\beta) u^{\beta}(1+B(u)),
\end{equation*}
where  $C_1, C_2$ are as in (H1)(a), $B(u)\to 0$ as $u\to 0$ and $D(u,s)=o(u^\beta+\int_Y(e^{s\bar\psi}-1)\, d\mu_{\bar\phi})$. Moreover,
\begin{equation*}
-C_2\beta \Gamma(1-\beta) u^{\beta-1} (1+E(u))\le \frac{d}{du}\lambda(u,s)+\frac{d}{du}D(u,s)\le -C_1 \beta\Gamma(1-\beta) u^{\beta-1}(1+E(u)),
\end{equation*}
where $E(u)\to 0$ as $u\to 0$ and
$\frac{d}{du}D(u,s)\ll s^\eps\, u^{(\beta-\eps-1)}( u^\beta+s^{\beta-\eps})$, for arbitrarily small $\eps>0$.
\end{prop}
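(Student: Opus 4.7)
The proof parallels that of Proposition~\ref{prop-deriveign}, replacing the sharp tail asymptotic (H1)(b) by the two-sided envelope (H1)(a); consequently the exact leading constant $c\Gamma(1-\beta)$ becomes the interval $[C_2\Gamma(1-\beta),\,C_1\Gamma(1-\beta)]$, and the subleading $c_H u$ term is absorbed into $B(u)=o(1)$. My starting point is the eigenvalue relation $R(u,s)v(u,s)=\lambda(u,s)v(u,s)$. Integrating against $\mu_{\bar\phi}$ and using that $R_{\mu_{\bar\phi}}$ preserves $\mu_{\bar\phi}$, i.e.\ $\int R_{\mu_{\bar\phi}} f\,d\mu_{\bar\phi}=\int f\,d\mu_{\bar\phi}$, after normalising so $\int v(u,s)\,d\mu_{\bar\phi}=1$, yields
\[
1-\lambda(u,s)=\int(1-e^{-u\tau+s\bar\psi})\,d\mu_{\bar\phi}+\int(1-e^{-u\tau+s\bar\psi})(v(u,s)-1)\,d\mu_{\bar\phi}.
\]
By Lemmas~\ref{lemma-P1} and~\ref{lemma-rus} together with (P2), $\|v(u,s)-1\|_\B\to 0$, so the second integral is $o(u^\beta+\Pi(s))$ and is absorbed into $D(u,s)$.

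Next, expanding
\[
1-e^{-u\tau+s\bar\psi}=(1-e^{-u\tau})-(e^{s\bar\psi}-1)-(1-e^{-u\tau})(e^{s\bar\psi}-1),
\]
the leading piece is treated by the Tauberian identity $\int(1-e^{-u\tau})\,d\mu_{\bar\phi}=u\int_0^\infty e^{-ut}\mu_{\bar\phi}(\tau>t)\,dt$. Plugging in the envelope $C_2 t^{-\beta}\le\mu_{\bar\phi}(\tau>t)\le C_1 t^{-\beta}$ and $\int_0^\infty e^{-ut}t^{-\beta}\,dt=\Gamma(1-\beta)u^{\beta-1}$ yields
\[
C_2\Gamma(1-\beta)u^\beta(1+B(u))\le\int(1-e^{-u\tau})\,d\mu_{\bar\phi}\le C_1\Gamma(1-\beta)u^\beta(1+B(u)),
\]
where $(1+B(u))$ absorbs the small-$t$ contribution (which is $O(u)=o(u^\beta)$). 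The second piece equals $-\Pi(s)$ exactly. The cross term $\int(1-e^{-u\tau})(e^{s\bar\psi}-1)\,d\mu_{\bar\phi}$ is $o(u^\beta+\Pi(s))$ under either branch of (H2): split $Y=\{\tau\le 1/u\}\cup\{\tau>1/u\}$, use $|1-e^{-u\tau}|\le\min(u\tau,1)$, and estimate $|e^{s\bar\psi}-1|$ by $s|\bar\psi|$ under (H2)(i) (which is integrable) or by $s\tau^\gamma$ under (H2)(ii); (H1)(a) then delivers the required decay. Combining these establishes the first chain of inequalities.

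For the derivative in $u$, the analyticity of $u\mapsto R(u,s)$ from (P1) makes $\lambda(u,s)$ analytic in $u>0$. Differentiating the identity above in $u$ produces the main term $-\int\tau\,e^{-u\tau+s\bar\psi}\,d\mu_{\bar\phi}$, which satisfies the corresponding envelope via $\int\tau e^{-u\tau}\,d\mu_{\bar\phi}=\tfrac{d}{du}\int(1-e^{-u\tau})\,d\mu_{\bar\phi}$; differentiating the two-sided Tauberian expression term-by-term yields the stated $C_i\beta\Gamma(1-\beta)u^{\beta-1}$ bounds. The error bound $\tfrac{d}{du}D(u,s)\ll s^\eps u^{\beta-\eps-1}(u^\beta+s^{\beta-\eps})$ follows by differentiating $R(u,s)-R(u,0)=\sum_n R_n(e^{s\bar\psi}-1)e^{-un}$ term-by-term in $u$ and re-running the estimate in the proof of Lemma~\ref{lemma-rus}: the extra factor $n$ in each summand produces a $u^{-1}$ loss, balanced by the $n^\eps$ or $n^{\gamma(\beta-\eps)}$ gain from bounding $(e^{s\bar\psi}-1)$. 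The principal obstacle is the uniform control of all $o$-type remainders as $(u,s)\to(0,0)$ simultaneously: since (H1)(a) supplies no precise leading constant, every error estimate must be justified uniformly in $s$, and the subtle interaction of the $u$- and $s$-asymptotics in the derivative bound requires separate careful treatment of the two branches of (H2).
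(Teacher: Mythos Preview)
Your proposal is correct and follows essentially the same route as the paper: the same eigenvalue identity $1-\lambda(u,s)=\Psi(u)-\Pi(s)-W(u,s)-V(u,s)$, the same Tauberian integration-by-parts for $\Psi(u)$, and the same perturbative bounds on $V$, $W$ and their $u$-derivatives carried over from the proof of Proposition~\ref{prop-deriveign}. One minor point: what you call the ``small-$t$ contribution'' in the $B(u)$ term is really the integer-correction $\Delta(x)=[x]^{-\beta}-x^{-\beta}$ across all $x$ (the paper isolates this as $L(u)=u\int_0^\infty e^{-ux}\Delta(x)\,dx$ and shows $L(u)=O(u)$), not just the contribution from $t<1$; your conclusion $O(u)=o(u^\beta)$ is nonetheless correct.
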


\begin{cor}\label{cor-deriveign2}
Assume the setting of Proposition~\ref{prop-deriveign2}. Moreover, assume that there exists $a>0$ 
such that $s=O(u^a)$ as $u\to 0$. 
Then for arbitrarily small $\eps>0$, as $u\to 0$,
\begin{align*}
 -C_2\beta \Gamma(1-\beta) u^{\beta-1} (1+E(u))&\le \frac{d}{du}P(\overline{\phi+s\psi-u})+\frac{d}{du}D(u,s)\\
&\le -C_2\beta \Gamma(1-\beta) u^{\beta-1} (1+E(u)),
\end{align*}
where the functions $E(u)$ and $\frac{d}{du}D(u,s)$ are as in the conclusion of  Proposition~\ref{prop-deriveign2}. 
\end{cor}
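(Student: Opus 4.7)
The plan is to translate the two-sided bound on $\tfrac{d}{du}\lambda(u,s)$ from Proposition~\ref{prop-deriveign2} into the corresponding bound on $\tfrac{d}{du}P(\overline{\phi+s\psi-u})$, exploiting the identification $P(\overline{\phi+s\psi-u})=\log\lambda(u,s)$ valid in a one-sided neighbourhood of $(0,0)$ by Lemma~\ref{lem:rec} together with the RPF Theorem. Differentiating gives
\begin{equation*}
\frac{d}{du}P(\overline{\phi+s\psi-u}) \;=\; \frac{1}{\lambda(u,s)}\,\frac{d}{du}\lambda(u,s),
\end{equation*}
so once we know $\lambda(u,s)\to 1$ as $(u,s)\to (0,0)$, the estimates on $\tfrac{d}{du}\lambda$ are inherited by $\tfrac{d}{du}P$ modulo a $1+o(1)$ multiplicative factor.

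First I would confirm $\lambda(u,s)=1+o(1)$ by invoking the sandwich in Proposition~\ref{prop-deriveign2}: $1-\lambda(u,s)=\Pi(s)-D(u,s)+O(u^\beta)$. The quantity $\Pi(s)=\int(e^{s\bar\psi}-1)\,d\mu_{\bar\phi}$ tends to $0$ with $s$ (by dominated convergence under (H2)(i), or by the tail estimate (H1)(a) combined with $\psi_0\ll\tau^\gamma$ under (H2)(ii)), and $D(u,s)=o(u^\beta+\Pi(s))$, yielding $1/\lambda(u,s)=1+o(1)$ as $u,s\to 0$.

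Next I would multiply the sandwich
\begin{equation*}
-C_2\beta\Gamma(1-\beta)u^{\beta-1}(1+E(u))\le \tfrac{d}{du}\lambda(u,s)+\tfrac{d}{du}D(u,s)\le -C_1\beta\Gamma(1-\beta)u^{\beta-1}(1+E(u))
\end{equation*}
through by $1/\lambda(u,s)=1+o(1)$. The $o(1)$ multiplicative perturbation can be absorbed into a redefined $\widetilde E(u)\to 0$, and the bound $\tfrac{d}{du}D(u,s)\ll s^\eps u^{\beta-\eps-1}(u^\beta+s^{\beta-\eps})$ is preserved under multiplication by $1+o(1)$. The one subtlety — and the main bookkeeping obstacle — is that the $1+o(1)$ factor coming from $1/\lambda(u,s)$ depends on both $u$ and $s$, whereas the conclusion requires the multiplicative error to be expressed as a function $1+E(u)$ of $u$ alone. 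This is precisely where the hypothesis $s=O(u^a)$ is used: the $s$-dependent contribution to $1-\lambda(u,s)$ is then of order $O(u^{a\eta})$ for a suitable $\eta>0$ (depending on which branch of (H2) applies), so the $s$-dependence of the multiplicative perturbation is majorised by an explicit function of $u$ vanishing at $0$, and can legitimately be folded into $\widetilde E(u)$.

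The argument is structurally identical to the proof of Corollary~\ref{cor-deriveign} from Proposition~\ref{prop-deriveign}; no new ideas are required beyond the passage from $\lambda$ to $\log\lambda$ and the absorption of $s$-error into a $u$-only vanishing factor via $s=O(u^a)$.
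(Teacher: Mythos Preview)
Your proposal is correct and matches the paper's approach exactly: the paper's own proof is a one-line reference to the argument of Corollary~\ref{cor-deriveign} (with Proposition~\ref{prop-deriveign2} in place of Proposition~\ref{prop-deriveign}), and you have simply unpacked that reference --- passing from $\lambda$ to $\log\lambda$ via $\frac{d}{du}P=\lambda^{-1}\frac{d}{du}\lambda$, using $\lambda(u,s)\to 1$, and invoking $s=O(u^a)$ to absorb the $s$-dependent $o(1)$ into the $u$-only error. Your explicit remark that the $E(u)$ in the conclusion is a possibly modified function (still vanishing at $0$) rather than literally the one from Proposition~\ref{prop-deriveign2} is a fair reading of the statement.
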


\begin{proof}
The conclusion follows by the argument used in Corollary~\ref{cor-deriveign} using 
Proposition~\ref{prop-deriveign2} instead of Proposition~\ref{prop-deriveign}.~\end{proof}

\subsection{Proof of Propositions~\ref{prop-deriveign} and~\ref{prop-deriveign2}}
\label{ssec:tech props proofs}

\begin{pfof}{Proposition~\ref{prop-deriveign}} The proof below is a version of the argument
used in~\cite[Proof of Proposition 2.6]{T15} simplified by the fact that we only need to deal with
real perturbations ($e^{-u\tau}$ as opposed to $e^{-(u+i\theta)\tau}$, $\theta\in [-\pi,\pi)$).
However, we need to spell out the argument due to the second perturbation $e^{s\bar\psi}$.

For $u \in (0,\delta)$ write $R(u,s)v(u,s)=\lambda(u,s)v(u,s)$. Normalise such that $\int_Y v(u,s)\, d\mu_{\bar\phi}=1$. 
Integrating both sides and using the formalism in~\cite{Gouezel10} 
(a simplification of~\cite{AaronsonDenker01}), write 
\begin{align}\label{eq-ev-Gou}
\nonumber 1-\lambda(u,s)& =1-\int_Y\lambda(u,s)v(u,s)d\mu_{\bar\phi}=1-\int_Y R(u,s)(v(u,s))d\mu_{\bar\phi}\\
&=\int_Y(1-e^{-u\tau}e^{s\bar\psi})\, d\mu_{\bar\phi}-V(u,s),
\end{align}
where $V(u,s)=\int_Y (R(u,s)-R(0))(v(u,s)-v(0))\,d\mu_{\bar\phi}$. 
By Lemmas~\ref{lemma-P1} and~\ref{lemma-rus}, $\|R(u,s)-R(0,0)\|\ll u^\beta+s^{\beta-\eps}$, for any $\eps<\beta$. 
The same holds for  $\|v(u,s)-v(0,0)\|$. Since $\B\subset L^\infty(\mu_{\bar\phi})$, 
\begin{equation}
\label{eq-vus}
|V(u,s)|\ll  (u^\beta+s^{\beta-\eps})^2.
\end{equation}

Next,  we estimate  $|\frac{d}{du} V(u,s)|$. First, note that $\|\frac{d}{du} R(u,s)\|=\| \sum_{n\ge 1}n R_n(e^{s\bar\psi}-1)e^{-un}\|$. If (H2(i) holds, then
$\|R_n(e^{s\bar\psi}-1)\| \ll s n^{\eps}\|R_n\|$ for any $\eps>0$ and thus,
\[
\left\|\frac{d}{du} R(u,s)\right\| \ll \sum_{n\ge 1}n^{1+\eps}\|R_n\|e^{-un}.
\]
Let $S_n=\sum_{j\ge n}\|R_j\|$ and note that by (P1), $S_n\ll  n^{-\beta}$.
Since $\sum_{j\ge n}\|R_j\|<\infty$,
\begin{align*}
\sum_{n\geq 1} &\|R_n\|n^{1+\eps}e^{-un}=\sum_{n\geq 1} (S_n-S_{n-1})n^{1+\eps}e^{-un}=\sum_{n\geq 1}S_n(n^{1+\eps}-(n-1)^{1+\eps})e^{-un}\\
&+\sum_{n\ge 1}(\sum_{j\ge n}\|R_j\|)e^{-un}
\ll \sum_{n\geq 1}n^{-(\beta-\eps)}e^{-un}\ll\int_0^\infty x^{-(\beta-\eps)}e^{-ux}\, dx\ll u^{(\beta-\eps-1)}.
\end{align*}
Altogether, if (H2)(i) holds then $\|\frac{d}{du} R(u,s)\| \ll s\, u^{(\beta-\eps-1)}$ and the same holds  for $\|\frac{d}{du} v(u,s)\|$.  We already know that $\|R(u,s)-R(0,0)\|\ll u^\beta+s^{\beta}$
and that the same holds for  $\|v(u,s)-v(0,0)\|$. Thus, for arbitrarily small $\eps>0$,
$|\frac{d}{du} V(u,s)|\ll  s\, u^{(\beta-\eps-1)}( u^\beta+s^{\beta-\eps})$.

If (H2)(ii) holds then for arbitrarily small $\eps>0$, $\|R_n(e^{s\bar\psi}-1)\|\ll  s\|R_n\|+ s^{\eps/\gamma} n^{\eps}\|R_n\|$, for $\gamma\in[\beta,1]$ and
proceeding in the case of (H2)(i), we obtain that   $\|\frac{d}{du} R(u,s)\| \ll s^\eps\, u^{(\beta-\eps-1)}$. 
Thus, taking the worst estimate in the (H2)(i) and (H2)(ii), for arbitrarily small $\eps>0$,
\begin{equation*}
\label{eq-der-op}
\left|\frac{d}{du} V(u,s)\right|\ll  s^\eps\, u^{(\beta-\eps-1)}( u^\beta+s^{\beta-\eps}).
\end{equation*}

In the rest of the proof, we deal with the pure scalar part and write
\begin{align*}
Q(u,s) &:=\int_Y(1-e^{-u\tau}e^{s\bar\psi})\, d\mu_{\bar\phi}\\
&=\int_Y(1-e^{-u\tau})\, d\mu_{\bar\phi}+\int_Y(1-e^{s\bar\psi})\, d\mu_{\bar\phi} 
- \int_Y(1-e^{-u\tau})(1-e^{s\bar\psi})\, d\mu_{\bar\phi}\\
&=\Psi(u)+\Pi(s)-W(u,s).
\end{align*}
It is easy to see that using H{\"o}lder inequality in either case of (H2), we 
have $|W(u,s)|\ll u^{\beta/2}s^{\beta/2}$. This estimate together with~\eqref{eq-ev-Gou} and~\eqref{eq-vus} 
gives the estimate for $D(u,s)$ in the statement of the proposition.
Next,
\begin{align*}
\frac{d}{du}Q(u,s)=\frac{d}{du}\Psi(u) - \frac{d}{du} W(u,s).
\end{align*}
By the argument used in~\cite[Proof of Proposition 2.6]{T15} (setting $\theta=0$ there and replacing 
$\frac{d}{d\theta}$ with $\frac{d}{du}$),
\[
\Psi(u)=c\Gamma(1-\beta) u^\beta + c_H u +E(u),
\]
where $|\frac{d}{du} E(u)|\ll u^{2\beta-1}$. To complete the proof we note that

\begin{align*}
\Big|\frac{d}{du} W(u,s)\Big|&\ll \Big|\int_Y\tau e^{-u\tau}(1-e^{s\bar\psi})\, d\mu_{\bar\phi}\Big|\ll s \int_Y R_{\mu_{\bar\phi}}(\tau e^{-u\tau}(1-e^{s\bar\psi}))\, d\mu_{\bar\phi}\\
&\ll \sum_n n \|R_n\|(1-e^{s\bar\psi}) e^{-un}.
\end{align*}

By the argument used in obtaining~\eqref{eq-vus}, we deduce that $|\frac{d}{du} W(u,s)|\ll s^\eps\, u^{(\beta-\eps-1)}( u^\beta+s^{\beta-\eps})$. 
The claimed estimate on $|\frac{d}{du} W(u,s)|$
follows from this together with~\eqref{eq-vus}, concluding the proof.~\end{pfof}

Using the same notation and some estimates obtained in the proof of Proposition~\ref{prop-deriveign}, we can complete the following.

\begin{pfof}{Proposition~\ref{prop-deriveign2}}
With the same notation used in the proof of Proposition~\ref{prop-deriveign}, write
\begin{align*}
\nonumber 1-\lambda(u,s)=\Psi(u)+\Pi(s)-W(u,s)-V(u,s),
\end{align*}
For $W(u,s)$ and $V(u,s)$ we have the estimates  from the proof of Proposition~\ref{prop-deriveign}, namely
\[
|W(u,s)|\ll u^{\beta/2}s^{\beta/2},\quad |V(u,s)|\ll  (u^\beta+s^{\beta-\eps})^2
\]
and
\[
 \left|\frac{d}{du} V(u,s)\right|,\, \left|\frac{d}{du} W(u,s)\right|
 \ll s^\eps\, u^{(\beta-\eps-1)}( u^\beta+s^{\beta-\eps}).
\]
To estimate  $\Psi(u), \frac{d}{du}\Psi(u)$ under (H1)(a), we need to write down the complete argument.
Define the distribution function $G(x)=\mu(\tau\le x)$ and note that under (H1)(a), $C_2 x^{-\beta} +C_2\Delta(x)\le 1-G(x)\le C_1 x^{-\beta} +C_1\Delta(x)$,
where $\Delta(x)=[x]^{-\beta}-x^{-\beta}=O(x^{-(\beta+1)})$.

Integration by parts
gives 
\begin{align*}
\Psi(u) &=\int_0^\infty (1-e^{-ux })\, dG(x) =-\int_0^\infty (1-e^{-ux })\, d(1-G(x))
=u\int_0^\infty e^{-ux }(1-G(x))\, dx.
\end{align*}
So,
\[
 C_2(K(u)+L(u))\le \Psi(u)\le C_1(K(u)+L(u)),
\]
where $K(u)=u\int_0^\infty e^{-ux }x^{-\beta}\, dx$ and $L(u)=u\int_0^\infty e^{-ux }\Delta(x)\, dx$. 
Here, we recall that $K(u)=\Gamma(1-\beta) u^\beta$.

First, let $C_\Delta=\int_0^\infty \Delta(x)\, dx$ and note that 
\begin{align*}
L(u)=uC_\Delta +u\int_0^\infty (e^{-ux }-1)\Delta(x)\, dx.
\end{align*}
But, $|\int_0^\infty (e^{-ux }-1)\Delta(x)\, dx|\ll u\int_0^{1/u} x\Delta(x)\, dx+\int_{1/u}^\infty \Delta(x)\, dx\ll u^\beta$. Hence,
$\int_0^\infty e^{-ux }\Delta(x)\, dx=A(u)$, where $A(u)=C_\Delta(1+o(1))$. Hence, $L(u)=uC_\Delta(1+o(1))$.
Altogether,
\[
 C_2\Gamma(1-\beta) u^\beta(1+B(u))\le \Psi(u)\le C_1\Gamma(1-\beta) u^\beta(1+B(u)),
\]
where $B(u)\to 0$ as $u\to 0$. Putting the above together, we obtain the asymptotics (within bounds) 
of $\lambda(u,s)$ as $u,s\to 0$.

Next, recall that  $A(u)=\int_0^\infty e^{-ux }\Delta(x)\, dx=C_\Delta(1+o(1))$ and thus
\begin{align*}
\frac{d}{du} L(u)=A(u)+u\int_0^\infty e^{-ux }x\Delta(x)\, dx=A(u)+M(u),
\end{align*}
where $|M(u)|\ll u\int_0^\infty e^{-ux } x^{-\beta}dx\ll u^\beta$. Moreover,
\begin{align*}
\frac{d}{du}K(u)&=\int_0^\infty e^{-ux }x^{-\beta}\, dx-u\int_0^\infty e^{-ux }x^{1-\beta}\, dx=\beta\int_0^\infty e^{-ux }x^{-\beta}\, dx\\
&=\beta\Gamma(1-\beta) u^{\beta-1}.
\end{align*}
Thus,
\[
 C_2\beta\Gamma(1-\beta) u^{\beta-1}(1+E(u))\le \frac{d}{du}\Psi(u)\le C_1\beta\Gamma(1-\beta) u^{\beta-1}(1+E(u)),
\] 
where $E(u)\to 0$ as $u\to 0$. The conclusion follows by putting the above together and using that
$-\frac{d}{du}\lambda(u,s)=\frac{d}{du}\Big(\Psi(u)-W(u,s)-V(u,s)\Big)$.
\end{pfof}

\subsection{Proof of Theorems~\ref{thm:PbarP} and \ref{thm:PbarP2}}
\label{ssec:main_thm_pfs}

\begin{pfof}{Theorem~\ref{thm:PbarP}}
Set $r(u,s) = \frac{d}{du} P(\overline{\phi + s \psi - u})$.
By Corollary~\ref{cor-deriveign} (with $c_H$ as defined there), the following holds as $u\to 0$ and $s=O(u^a)$ 
for some $a>0$ and for arbitrarily small $\eps>0$:
\begin{equation*}
r(u,s)=-c \beta\Gamma(1-\beta) u^{\beta-1} - c_H+O(u^{2\beta-1})+O(s^\eps\, u^{2\beta-\eps-1})
+O(s^{\beta-\eps/a}u^{\beta-1}).
\end{equation*}
For any small $u_0 > 0$, integration gives
\begin{align*}
P(\overline{\phi + s\psi-u_0}) -  P(\overline{\phi+s\psi})  
 =& \int_0^{u_0} r(u,s) \, du \\[1mm]
 =  -\ c \Gamma(1-\beta) u_0^{\beta} & - c_H u_0 +  O(u_0^{2\beta})+
 O(s^\eps\, u_0^{2\beta-\eps})+O(s^{\beta-\eps/a}u_0^{\beta}).
\end{align*}
By liftability, for $u_0 = u_0(s) = P(\phi+s\psi)$, 
we obtain $P(\overline{\phi+s\psi-u_0}) = 0$, so the left hand side  of the above expression becomes
$-P(\overline{\phi + s\psi)}$. By assumption, $u_0(s)>0$, for $s>0$. 
The continuity property of the pressure function gives $u_0(s)\to 0$ as $s\to 0$.
Thus,
\begin{equation}\label{eq:PbarP}
P(\overline{\phi+s\psi})=c \beta\Gamma(1-\beta) u_0^{\beta} + 
c_H u_0 +O(u_0^{2\beta})+O(s^\eps\, u_0^{2\beta-\eps})+O(s^{\beta-\eps/a}u_0^{\beta}),
\end{equation}
as $s\to 0$. Here $c>0$ comes from (H1)(b) and $c_H$ is real non-zero constant (see Proposition~\ref{prop-deriveign} 
for the exact form).
By assumption, there exists $a>0$ such that $s=O((u_0(s))^a)$.  
Hence, the above equation applies to $r(u_0,s)$. 
Recall that $C=(c \beta\Gamma(1-\beta))^{-1}$ and that $u_0(s) = P(\phi+s\psi)$. Hence,
\begin{eqnarray*}
P(\phi+s\psi) &=&
\left( CP(\overline{\phi+s\psi}) \right)^{1/\beta}\left(
1 - \frac{c_H}{\beta} C^{1/\beta} P(\overline{\phi+s\psi})^{(1-\beta)/\beta} \right.\\
&& \left. \qquad  
+\ O( P(\overline{\phi+s\psi})+ O(s^\eps\, P(\overline{\phi+s\psi})^{(\beta-\eps)/\beta})+O(s^{\beta-\eps/a}) \right),
\end{eqnarray*}
as $s \to 0$.  The conclusion follows since $s=O((u_0(s))^a)$.
\end{pfof}

\begin{pfof}{Theorem~\ref{thm:PbarP2}}
With the same notation as in the proof of Theorem~\ref{thm:PbarP}, under the assumption $s=O(u^a)$ 
for some $a>0$, Corollary~\ref{cor-deriveign2} gives that
\begin{align*}
 -C_2\beta \Gamma(1-\beta) u^{\beta-1} (1+E(u))&\le r(u,s)+\frac{d}{du}D(u,s)\\
&\le -C_2\beta \Gamma(1-\beta) u^{\beta-1} (1+E(u)).
\end{align*}
where $E(u)\to 0$ as $u\to 0$ and $\frac{d}{du}D(u,s)=O(s^\eps\, u^{2\beta-\eps-1})+O(s^{\beta-\eps/a}u^{\beta-1})$ 
for arbitrarily small $\eps>0$. The conclusion follows from this together with the argument in 
the proof of Theorem~\ref{thm:PbarP}.
\end{pfof}

\begin{rmk}
\label{rmk-H2}
From the statement of Proposition~\ref{prop-deriveign} we can see that the asymptotics of $s\mapsto P(\overline{\phi+s\psi})$ are the same as those for $\Pi(s)=\int_Y(e^{s\bar\psi}-1)\, d\mu_{\bar\phi}$, which can give us even more information on the form of $s\mapsto P({\phi+s\psi})$ when considering the proofs above.

 Clearly, under (H2), $\Pi(s)\to 0$, as $s\to 0$. Using standard arguments in probability theory(see, for instance,~\cite{Feller66}), one has
\begin{itemize}
\item[(a)] If (H2)(i) holds then $\Pi(s)=s(1+o(1))$.
\item[(b)] Suppose that (H2)(ii) holds. Then
\begin{itemize}
\item[(i)] if (H1)(b) holds then $\Pi(s)\le c \beta\Gamma(1-\beta) e^{sC}s^{-\gamma\beta}$.
This follows from the proof of Proposition~\ref{prop-deriveign} for estimating $\Psi(u)$ there.
\item[(ii)] if (H1)(a) holds then 
\begin{equation*}
C_2 \Gamma(1-\beta) e^{Cs}s^{\gamma\beta}(1+B(s))\le  \Pi(s)\le C_1 \Gamma(1-\beta) e^{Cs}s^{\gamma\beta}(1+B(s)),
\end{equation*}
where $B(s)\to 0$ as $s\to 0$. The above inequality follows by the argument used at the end of 
the proof of Proposition~\ref{prop-deriveign2} for estimating $\Psi(u)$ there .
\end{itemize}
\end{itemize}
\end{rmk}

\section{Limit properties of $\mu_{\phi+s\psi}$ as $s\to 0$}
\label{sec-LP}

We recall that in the set-up of Section~\ref{sec:abstsu} for fixed $s>0$, the potential $\phi+s\psi$ is 
positive recurrent. As clarified below, by allowing $s\to 0$, we move to the null  recurrent scenario.
Throughout this section, we continue to assume the set-up of Section~\ref{sec:abstsu}  and for simplicity, 
we further assume the following restriction on $f, F=f^\tau$ and $\bar\psi$:

\begin{itemize}
\item[\bf (H3)]  
\begin{itemize}
\item[\bf (i)] $\tau:Y\to\N$ is the first return time of $f$ to $Y$.
\item[\bf (ii)]  $\lim_{n\to\infty}\frac{\mu_{\overline{\phi + s\psi}}|_{\{\tau = n\}}}{e^{s\psi_n}\mu_{\bar\phi}|_{\{\tau = n\}}}=C'' > 0$ 
uniformly in $s$.
\item[\bf (iii)] (H2)(ii) holds for some $\gamma\in (0,1]$.
\end{itemize}
\end{itemize}
Assumption (H3)(ii)  (on $F$ and $\psi$) is quite strong but it is satisfied by maps in the family described in~\eqref{eq:nMPM} below.
In our examples in Section~\ref{sec:ex}, we start with an assumption on $\psi$ and verify the assumption below on $\bar\psi$.
Throughout this section (also later on in Section~\ref{sec-correl}) we will use a less restrictive form of (H1)(b), namely
\begin{itemize}
\item[\bf (H1)(b')] $\mu_{\bar\phi}(y\in Y:\tau(y)>n)=c n^{-\beta}(1+o(1))$ for 
some\footnote{Here we could allow that $\mu_{\bar\phi}(y\in Y:\tau(y)>n)=\ell(n) n^{-\beta}$, 
for a slowly varying function $\ell$, but for simplicity of notation we omit this.} $\beta\in(0,1)$.
\end{itemize}

We recall from Section~\ref{sec:abstsu}  that $F$ is a Markov map satisfying BIP and as such (P1) and (P2) hold
in the Banach space of bounded piecewise H{\"o}lder functions $\B\subset L^\infty(\mu_{\bar\phi})$ 
(see~\cite{AaronsonDenker01}). 
Throughout we set $\E_s(\tau) := \E_{\mu_{\overline{\phi + s\psi}}}(\tau)$. 
The next result below shows how $\E_s(\tau) \to \infty$ as $s\to 0$.
\begin{lemma}\label{lemma-tauexpec}
Assume (H3) and (H1)(b').
Then the following holds as as $s\to 0$,
\[
\E_s(\tau) \geq C_{\gamma,c} s^{(\beta-1)/\gamma}(1+o(1)),
\]
where $C_{\gamma,c}$ depends on $\gamma$ and the constant $c$ in (H1)(b').
\end{lemma}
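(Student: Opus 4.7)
The plan is to bound $\E_s(\tau) = \sum_{n \ge 1} n\, \mu_{\overline{\phi+s\psi}}(\tau = n)$ from below, first by transferring mass from the perturbed measure to $\mu_{\bar\phi}$ via (H3)(ii), controlling $\psi_n$ through (H3)(iii), and then evaluating the resulting scalar sum using the tail asymptotic (H1)(b') followed by a single change of variables in a gamma-type integral.

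First I would apply (H3)(ii), which is uniform in $s$, to pick $N_0$ such that for every $n \ge N_0$ and every sufficiently small $s$,
\[
\mu_{\overline{\phi+s\psi}}(\tau = n) \;\ge\; \tfrac{1}{2} C''\, e^{s\psi_n}\, \mu_{\bar\phi}(\tau = n).
\]
Since (H3)(iii) is just (H2)(ii), we have $\bar\psi = C' - \psi_0$ with $0 \le \psi_0 \le C\tau^\gamma$, so on $\{\tau = n\}$ the bound $\bar\psi \ge C' - Cn^\gamma$ transfers to $\psi_n$ (the piecewise H\"older control built into (H2)(ii) ensures that $\psi_n$ tracks $\bar\psi$ on this level set up to a bounded factor). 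Combining these two inputs yields
\[
\E_s(\tau) \;\ge\; \tfrac{1}{2} C'' e^{sC'} \sum_{n \ge N_0} n\, e^{-sCn^\gamma}\, \mu_{\bar\phi}(\tau = n).
\]

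The heart of the argument is to evaluate this sum as $s \to 0$ using (H1)(b'), which gives $T(n) := \mu_{\bar\phi}(\tau > n) = cn^{-\beta}(1 + o(1))$. I would use summation by parts to rewrite the sum in terms of $T(n)$ and then approximate by the integral $c\beta \int_{N_0}^{\infty} x^{-\beta} e^{-sCx^\gamma}\, dx$; the contribution of the $o(1)$ factor is of strictly lower order and is absorbed in a final $1 + o(1)$. The substitution $y = sCx^\gamma$ then gives
\[
\int_{N_0}^{\infty} x^{-\beta} e^{-sCx^\gamma}\, dx \;=\; \frac{(sC)^{(\beta-1)/\gamma}}{\gamma} \int_{sCN_0^\gamma}^{\infty} y^{(1-\beta)/\gamma - 1}\, e^{-y}\, dy,
\]
and since $(1-\beta)/\gamma > 0$ the remaining integral converges to $\Gamma((1-\beta)/\gamma)$ as $s \to 0$. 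Collecting constants (noting $e^{sC'} \to 1$) yields $\E_s(\tau) \ge C_{\gamma,c}\, s^{(\beta-1)/\gamma}(1 + o(1))$ with $C_{\gamma,c}$ proportional to $c\beta\, \gamma^{-1}\, \Gamma((1-\beta)/\gamma)$.

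The main technical point is handling the $o(1)$ error in (H1)(b') carefully when turning tails into pointwise masses: naive differencing would produce an error of larger order than the main term once multiplied by $n$ inside the sum, so summation by parts against $T(n)$ is essential to keep the error genuinely subordinate. Everything else reduces to the direct gamma-integral calculation above, with the sum/integral comparison and the cutoff at $N_0$ each contributing only factors of $1 + o(1)$ since $s^{(\beta-1)/\gamma} \to \infty$.
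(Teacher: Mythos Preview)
Your proof is correct and follows essentially the same route as the paper: pass from $\mu_{\overline{\phi+s\psi}}$ to $e^{s\psi_n}\mu_{\bar\phi}$ via (H3)(ii), bound $\bar\psi$ below via (H3)(iii), and reduce to the gamma-type integral $\int x^{-\beta}e^{-sCx^\gamma}\,dx$ using (H1)(b'). The only organisational difference is that the paper starts from the tail representation $\E_s(\tau)=\sum_{k\ge 0}\mu_{\overline{\phi+s\psi}}(\tau>k)$ and uses an $s$-dependent cutoff $n=[s^{-\eta}]$ with $\eta\in(0,1/\gamma)$; this keeps all weights positive after the single summation by parts, so the $o(1)$ from (H1)(b') is controlled by $\sup_{k\ge n}|\epsilon(k)|\to 0$ directly. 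Your version, working with $\sum_n n\,e^{-sCn^\gamma}\mu_{\bar\phi}(\tau=n)$ and a fixed $N_0$, lands after Abel summation on $\sum_n(a_{n+1}-a_n)T(n)$ with $a_n=n e^{-sCn^\gamma}$, where the increments change sign near $n\sim s^{-1/\gamma}$; the error control still goes through (split at any fixed $M$, use $|r(n)|\le\delta n^{-\beta}$ for $n\ge M$ together with $\sum_n|a_{n+1}-a_n|n^{-\beta}=O(s^{(\beta-1)/\gamma})$, and note the finite piece below $M$ is $O(1)=o(s^{(\beta-1)/\gamma})$), but it is worth making this explicit rather than leaving it as ``strictly lower order''.
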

As an immediate consequence of the above result and formula~\eqref{eq:pullback} (with $\mu_{\overline{\phi + s\psi}}$  instead of  $\mu_{\bar\phi}$), we have that  $\lim_{s\to 0} \mu_{\phi + s\psi}(X)=\infty$.
Since we also assume that $\tau$ is a first return, the limit measure $\lim_{s\to 0} \mu_{\phi + s\psi}$ 
is infinite and sigma-finite.

It seems natural to expect that $\mu_{\phi+s\psi}$ converges in some sense to $\mu_{\phi}$, as $s\to 0$. 
Since we are in an infinite measure setting with a nice first return inducing scheme, we approach  this
question via the `induced' measures. The next result gives estimates on how $\mu_{\overline{\phi+s\psi}}$ converges 
$\mu_{\bar\phi}$ as $s\to 0$ in a sense that is stronger than the weak$^*$ topology. 

\begin{lemma}\label{lemma-contmeas} 
Assume (H3). There exists $C>0$ such that for all continuous function  $v$ supported on $Y$ and for any $\eps<\beta$,
\[
|\mu_{\overline{\phi+s\psi}}(v)-\mu_{\bar\phi}(v)|\le C \sup_{y\in Y}|v(y)| s^{\beta-\eps}.
\]
\end{lemma}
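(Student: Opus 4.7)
The plan is to decompose the difference according to the level sets of the first-return time and then control the pieces coming from small and large values of $\tau$ separately. Since $\tau$ is the first return by (H3)(i) and both measures are probability measures on $Y$, the identity
\[
\mu_{\overline{\phi+s\psi}}(v) - \mu_{\bar\phi}(v) = \sum_{n \geq 1} \int_{\{\tau = n\}} v \, d\bigl(\mu_{\overline{\phi+s\psi}} - \mu_{\bar\phi}\bigr)
\]
reduces the estimate to a bound on the total variation of $\mu_{\overline{\phi+s\psi}} - \mu_{\bar\phi}$ restricted to each level set. Using (H3)(ii), I will identify the Radon--Nikodym derivative on $\{\tau=n\}$ as $C''(s)\,e^{s\psi_n}(1+\epsilon_n(s))$, where $\epsilon_n(s)\to 0$ as $n\to\infty$ uniformly in $s$ and $C''(s)\to 1$ as $s\to 0$ (forced by the coincidence of the two measures at $s=0$). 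Under (H3)(iii), $|\psi_n - C'|\le Cn^{\gamma}$, so the key quantity $|e^{s\psi_n}-1|$ is of order $sn^{\gamma}$ when $sn^\gamma\le 1$ and at most $O(1)$ otherwise.

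Next, I would split the sum at the threshold $N=\lfloor s^{-1/\gamma}\rfloor$. For the head $n\le N$, combining the Taylor bound $|e^{s\psi_n}-1|\ll sn^{\gamma}$ with (H1)(b'), which yields $\mu_{\bar\phi}(\{\tau=n\})\ll n^{-\beta-1}$, controls that piece by $|v|_\infty\, s\sum_{n\le N} n^{\gamma-\beta-1}$. This is of order $|v|_\infty s$ when $\gamma<\beta$, of order $|v|_\infty s\log(1/s)$ at $\gamma=\beta$, and of order $|v|_\infty s^{\beta/\gamma}$ when $\gamma>\beta$. For the tail $n>N$, I use the crude bound $|e^{s\psi_n}-1|\le C$ together with the tail estimate $\mu_{\bar\phi}(\tau>N)\ll N^{-\beta}$ and the analogous estimate for $\mu_{\overline{\phi+s\psi}}$ (which follows from (H3)(ii) and $e^{s\psi_n}\le e^{sC'}$), yielding a contribution of order $|v|_\infty N^{-\beta}=|v|_\infty s^{\beta/\gamma}$. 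Since $\gamma\in(0,1]$, one has $\beta/\gamma\ge\beta$, so $s^{\beta/\gamma}\le s^{\beta}$, and all bounds collapse into $|v|_\infty s^{\beta-\eps}$ once the logarithm in the borderline case is absorbed into $s^{-\eps}$.

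The main obstacle I anticipate is turning the purely asymptotic statement (H3)(ii) into an effective, summable estimate valid for every $n\ge 1$. The uniformity in $s$ built into (H3)(ii), together with the polynomial tail (H1)(b'), should suffice to ensure that the error terms $\epsilon_n(s)$ contribute no more than the main term and can be absorbed into the constant $C$. A secondary point is that the prefactor $C''(s)$ must approach $1$ at a rate at least $O(s^{\beta-\eps})$; this is morally a consequence of the continuity of $s\mapsto\mu_{\overline{\phi+s\psi}}$ together with $C''(0)=1$, but if needed one can extract the quantitative rate from the perturbation estimates in Lemmas~\ref{lemma-P1} and~\ref{lemma-rus} applied to the normalised eigenvectors $v(0,s)$ of $R(0,s)$.
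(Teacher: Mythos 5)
Your proposal follows a genuinely different route from the paper: a direct, measure-level computation along the level sets $\{\tau=n\}$, rather than the operator-theoretic argument via perturbation of the dual transfer operators. However, there is a real gap. Hypothesis (H3)(ii) only controls the ratio of $\mu_{\overline{\phi+s\psi}}$ to $e^{s\psi_n}\mu_{\bar\phi}$ on $\{\tau=n\}$ asymptotically as $n\to\infty$, with uniformity in $s$; it says nothing about the size of the density ratio for finite $n$ as a function of $s$. In your notation, the error terms $\epsilon_n(s)$ vanish as $n\to\infty$ uniformly in $s$, but for small or moderate $n$ they are completely uncontrolled, and the head contribution $\sum_{n\le N}|\epsilon_n(s)|\,\mu_{\bar\phi}(\tau=n)$ need not go to zero at any rate with $s$ — indeed, $\mu_{\bar\phi}(\tau\le N)$ is close to $1$, so a priori this term is $O(1)$. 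The normalization $\sum_n\mu_{\overline{\phi+s\psi}}(\tau=n)=1$ does not close this gap either, since it only constrains a particular weighted sum of the $\epsilon_n(s)$. Similarly, $|C''(s)-1|\ll s^{\beta-\eps}$ is not forced by (H3)(ii); you correctly flag this, but your proposed repair — invoking Lemmas~\ref{lemma-P1} and~\ref{lemma-rus} together with perturbation theory on the eigendata of $R(0,s)$ — is exactly the mechanism of the paper's proof. Once one has operator-norm continuity of the eigenvector (hence of the invariant density) at rate $s^{\beta-\eps}$, one controls the whole density ratio uniformly in $n$, at which point the splitting at $N=s^{-1/\gamma}$ becomes superfluous. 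In other words, the repair absorbs the entire argument and your direct route is not an independent alternative.

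Two smaller technical points. First, the bound $\mu_{\bar\phi}(\tau=n)\ll n^{-\beta-1}$ does not follow from (H1)(b') (which only gives $\mu_{\bar\phi}(\tau>n)\sim cn^{-\beta}$); you would need (P1') or an Abel summation device to avoid relying on a pointwise tail bound. Second, for the record, the paper's proof works with $\|R(s)-R(0)\|_{\mathcal B}\ll s^{\beta-\eps}$, isolates the simple eigenvalue via (P2), applies Kato-type perturbation theory to the dual eigenmeasures to get $|\mu_{\overline{\phi+s\psi}}(v)-\mu_{\bar\phi}(v)|\ll\|v\|_{\mathcal B}\,s^{\beta-\eps}$ for $v\in\mathcal B$, and then extends to continuous $v$ by density of $\mathcal B$ in $C_0(Y)$. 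An advantage of your approach, were the gap filled independently, is that it would give the estimate directly in terms of $|v|_\infty$ without the density step; but as it stands the key quantitative input still has to come from the spectral perturbation argument.
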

The above statement together with~\eqref{eq:pullback} (with $\mu_{\overline{\phi + s\psi}}$  
instead of  $\mu_{\bar\phi}$)
implies that $|\mu_{\phi+s\psi}(v)-\mu_\phi(v)|\le C \sup_{y\in Y}|v(y)| s^{\beta-\eps}$
for any continuous function $v$ supported on $\cup_{i=1}^N(f^{-i}Y)$ for some fixed $N$.
But it doesn't give a result on the weak${}^*$ convergence of $\mu_{\phi+s\psi}$ 
for the entire space $C_0(X)$.

\subsection{Proof of Lemmas~\ref{lemma-tauexpec} and~\ref{lemma-contmeas}}

\begin{pfof}{Lemma~\ref{lemma-tauexpec}} Note that for $n \geq 1$, 
\begin{align*}
\E_s(\tau)&\geq \sum_{k=n}^\infty \mu_{\overline{\phi + s\psi}}(\tau>k)=\sum_{k=n}^\infty \sum_{j\ge k}\mu_{\overline{\phi + s\psi}}(\tau=j)
\end{align*}
Using the uniform convergence
in (H3)(ii), we can take $\eta \in (0,1/\gamma)$ and $n = [s^{-\eta}]$,
\[
\sum_{k=n}^\infty \sum_{j\ge k}\mu_{\overline{\phi + s\psi}}(\tau=j)
=C''\sum_{k=n}^\infty \sum_{j\ge k}e^{s\psi_j}\mu_{\bar\phi }(\tau=j)(1+o(1)) \quad \text{ as } s \to 0.
\]
By (H3)(iii) there exists $C', C>0$ such that
\begin{align*}
\sum_{j\ge k}e^{s\psi_j}\mu_{\bar\phi }(\tau=j)&\ge \sum_{j\ge k}e^{s(C'-Cj^\gamma)}\left(\mu_{\bar\phi }(\tau\ge j)-\mu_{\bar\phi }(\tau> j-1)\right)\\
&=e^{s(C'-Ck^\gamma)}\mu_{\bar\phi }(\tau\ge k)- e^{sC'} 
\sum_{j\ge k+1}\left(e^{-sC(j-1)^\gamma}-e^{-sCj^\gamma}\right)\mu_{\bar\phi }(\tau\ge j).
\end{align*}
Recall $\gamma\in (0,1]$. Then as $k\to\infty$ and $s\to 0$,
\begin{align*}
\sum_{j\ge k+1}\big(e^{-sC(j-1)^\gamma}&-e^{-sCj^\gamma}\big)\mu_{\bar\phi }(\tau\ge j)=\sum_{j\ge k+1}e^{-sCj^\gamma}\mu_{\bar\phi }(\tau\ge j)\left(e^{-sC((j-1)^\gamma-j^\gamma)}-1\right)\\
&\ll s\sum_{j\ge k+1}e^{-sCj^\gamma}\mu_{\bar\phi }(\tau\ge j)j^{-(1-\gamma)}\\
&\ll s\mu_{\bar\phi }(\tau> k)k^{-(1-\gamma)} \sum_{j\ge k+1}e^{-sCj^\gamma}\\
&\ll s\mu_{\bar\phi }(\tau> k)k^{-(1-\gamma)}e^{-sCk^\gamma}=o\left(e^{-sCk^\gamma}\mu_{\bar\phi }(\tau\ge k)\right).
\end{align*}
Hence, the following holds as $s\to 0$:
\[
\sum_{k=n}^\infty \sum_{j\ge k}\mu_{\overline{\phi + s\psi}}(\tau=j)
\sim e^{sC'}\sum_{k=n}^\infty e^{-sCk^\gamma}\mu_{\bar\phi }(\tau\ge k)(1+o(1)).
\]
For the right hand side, we compute that as $s\to 0$,
\begin{align*}
\sum_{k=n}^\infty e^{-sCk^\gamma}\mu_{\bar\phi }(\tau\ge k)&\sim \sum_{j \geq n} j^{-\beta} e^{-sC j^\gamma} 
  \sim \int_n^\infty x^{-\beta} e^{-sCx^\gamma} \ dx \\
  &= \gamma^{-1} (sC)^{\frac{\beta-1}{\gamma}} \int_{sCn^\gamma}^\infty u^{\frac{1-\beta}{\gamma} - 1} e^{-u} \ du\\
  &= \gamma^{-1} (sC)^{\frac{\beta-1}{\gamma}} 
  \left( \int_0^\infty - \int_0^{sCn^\gamma} \right) u^{\frac{1-\beta}{\gamma} - 1} e^{-u} \ du\\
  &\sim \gamma^{-1} (sC)^{\frac{\beta-1}{\gamma}}
  \left( \Gamma\left(\frac{1-\beta}{\gamma}\right) - \frac{\gamma}{1-\beta} (sCn^\gamma)^{\frac{1-\beta}{\gamma}} \right)\\
  &= s^{\frac{\beta-1}{\gamma}} 
\gamma^{-1} C^{\frac{\beta-1)}{\gamma}} \Gamma\left( \frac{1-\beta}{\gamma}\right) - 
\frac{n^{1-\beta}}{1-\beta}.
 \end{align*}
 The conclusion follows by putting all the above together and recalling that $n = [s^{-\delta}] = o(s^{-1/\gamma})$.
\end{pfof}

Before the proof of Lemma~\ref{lemma-contmeas}, we recall that the transfer operator $R$ associated with $F$ 
is defined w.r.t.\ the $F$-equilibrium measure $\mu_{\bar\phi}$.
Note that in the present set-up, the density $h=\frac{d\mu_{\bar\phi}}{dm_{\bar\phi}}$, where $m_{\bar\phi}$ is 
the $\bar\phi$-conformal measure,
is bounded and bounded away from zero. Moreover, we have the following pointwise formula for $R$:
\[
Rv(y)=\sum_{y\in F^{-1}x} e^{\bar\phi(y)} v(y)\frac{h(y)}{h(x)}.
\]
We recall that using the above formula one verifies that (P2) holds in the space $\B$ of bounded 
piecewise H{\"o}lder functions
by showing that a) the Lasota-Yorke inequality holds (see~\cite{AaronsonDenker01});
b) the space $\B$ is compactly embedded in $L^\infty(\mu_{\bar\phi})$ (see~\cite[Chapter 4]{Aaronson}).
Thus, given that $R^*$ is the dual of $R$ defined on $\B^*$ (the topological dual of $\B$), for any $\mu^*\in\B^*$
we have $(R^*)^n\mu^*\to \mu_{\bar\phi}$.

\begin{pfof}{Lemma~\ref{lemma-contmeas}} 
Let $R(s)v=R(e^{s\bar\psi}v)$. Under (P1) and (H3)(iii), it follows from the proof of Lemma~\ref{lemma-rus} that
there exists $\delta>0$ such that for all $s\in (0,\delta)$,
\[
\|R(s)-R(0)\|\ll s^{\beta-\eps}.
\]
Thus, there exists some  $\delta'\in (0,\delta)$ and a family of eigenvalues $\lambda(s)$
well defined on $(0,\delta')$. Moreover, the family $\lambda(s)$ is $C^{\beta-\eps}$ with $\lambda(0)=1$.
Since $\lambda(0)$ is isolated in the spectrum of $R$,
it follows that there exists $r>0$ such that for all  $s\in (0,\delta')$, the $r$-neighbourhood of $\lambda(s)$ is
disjoint from the rest of the spectrum of $R(s)$. 
As a consequence, given that $R(s)^*$ acting on $\B^*$ is the dual operator of $R(s)$,
we have that $R^*(s) \mu_{\overline{\phi+s\psi}}=\lambda(s) \mu_{\overline{\phi+s\psi}}$
and $\lambda(s)^{-n}(R(s)^*\mu^*)^n\to \mu_{\overline{\phi+s\psi}}$, for any $\mu^*\in\B^*$.

By an argument similar to the one used in the proof of Lemma~\ref{lemma-rus}, 
there exists $\delta>0$ such that for all $s\in (0,\delta)$,
\[
\|R(s)^*-R(0)^*\|\ll s^{\beta-\eps}.
\]
It follows from standard perturbation theory (see~\cite{Kato}) that the same continuity property  holds for the family
of eigenmeasures $\mu_{\overline{\phi+s\psi}}$. Hence, for any $v\in\B$,
$|\mu_{\overline{\phi+s\psi}}(v)-\mu_{\bar\phi}(v)|\ll |v|_\infty s^{\beta-\eps}$.
Because $\B$ is dense in $C_0(Y)$, the conclusion follows.~\end{pfof}

\section{Statistical properties under (H3)}
\label{sec-correl}

Throughout this section, we assume the assumptions of Section~\ref{ssec:basic} along with (H3) and (H1)(b') in Section~\ref{sec-LP}. Sometimes we will need a few stronger assumptions as specified below.
We recall from Section~\ref{sec-LP} (see Lemma~\ref{lemma-tauexpec} and the explanation after the statement) 
that under these assumptions $\lim_{s\to 0}\mu_{\phi+s\psi}(X)=\infty$
and thus the limit of the sequence of potentials $\phi+s\psi$, as $s\to 0$ is null recurrent.

\subsection{Correlation coefficients as $s\to 0$}
We are interested in the asymptotics of the correlation function
\[
\rho_{n,s}(v,w)=\int_X v w\circ f^n\, d\mu_{\phi+s\psi},
\]
as $n\to\infty$ and $s\to 0$. In what follows we show that the main results in~\cite{Gouezel11, MT12} apply 
in this null recurrent potential setting.

For $\theta\in (-\pi,\pi]$ we define the complex version of $R(u)$ by
\[
R(\theta)v:= R_{\mu_{\bar\phi}}(e^{i\theta\tau}v).
\]
Let $R_n=R(1_{\{\tau=n\}})$ as in Section~\ref{sec:abstsu}. 
To recall all important results (including~\cite{Gouezel11}), we shall also
need the following stronger version of (P1), which holds  (in the Banach space $\mathcal{B}$)  under the assumptions of Section~\ref{ssec:basic} and  (H1)(b') formulated in Section~\ref{sec-LP}. 
\begin{itemize} 
\item[\bf (P1')] $\|R_n\|\ll \mu_{\bar\phi}(\tau=n)\ll n^{-(\beta+1)}$ with  $\beta$ as in (H1)(b').
\end{itemize}
For the verification of bf (P1') we refer to~\cite{Sarig02}.

 It is known that a certain aperiodicity assumption is required to obtain the exact asymptotics for the 
 correlation function (see~\cite{Sarig02, Gouezel04, Gouezel11, MT12}).
In the present set-up we shall need the following  stronger version of (P2), which holds (in the Banach space $\mathcal{B}$) under the assumptions of Section~\ref{ssec:basic}.
\begin{itemize}
\item[\bf (P2')] 
\begin{itemize}
\item[(i)] The eigenvalue $1$ is simple and isolated in the spectrum of $R(0)$.
\item[(ii)] For $e^{i\theta}\ne 1$, the spectrum of $R(\theta)$ does
not contain $1$.
\end{itemize}
\end{itemize}
The main result of this section states that the correlation coefficients
of $\mu_{\phi+s\psi}$ behave as in the null recurrent case, not only when $s = 0$, but also as
$s \to 0$ sufficiently quickly compared to the rate at which $n\to\infty$.

\begin{thm}\label{thm-mix}
 Assume (H1)(b') and (H3) and recall that $(P1')$ and  (P2') hold. 
Let $v, w$ be functions supported on $Y$  with $v\in\B$ and $w$ bounded. Then
\begin{equation}
\label{eq-mix}
\lim_{\stackrel{s\to 0}{n\to\infty}}  n^{1-\beta}\int_Y v w\circ f^n \, d\mu_{\phi+s\psi}=\int_Y v \, d\mu_{\phi}\int_Y w \, d\mu_{\phi},
\end{equation}
where the simultaneous limit is taken such that $s=o(n^{-(1-\beta)/(\beta-\eps)})$, for $\eps>0$ arbitrarily small.
\end{thm}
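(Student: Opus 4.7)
The strategy is to combine the operator renewal machinery of~\cite{Gouezel11, MT12} for the null recurrent base case $s=0$ with the uniform-in-$s$ continuity estimates for the perturbed induced transfer operators provided by Lemmas~\ref{lemma-rus} and~\ref{lemma-contmeas}, and then to choose the relative rate of $s\to 0$ versus $n\to\infty$ so as to make the perturbative error negligible.

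First I would exploit the first return assumption (H3)(i): it gives Kac's identity $\mu_{\phi+s\psi}|_Y = \mu_{\overline{\phi+s\psi}}$, and the identification, for $x \in Y$, $f^n(x)\in Y$ iff $n$ is an ergodic sum $\tau_\ell(x) := \sum_{j=0}^{\ell-1}\tau(F^j x)$ for some $\ell \ge 1$. For $v,w$ supported on $Y$ this produces the operator renewal expression
\[
\int_Y v\cdot w\circ f^n\, d\mu_{\phi+s\psi} \;=\; \int_Y T_n(s)\,v\cdot w\, d\mu_{\overline{\phi+s\psi}},
\]
where $T_n(s) := \sum_{\ell\ge 1}\sum_{n_1+\cdots+n_\ell = n}\tilde R_{n_1}(s)\cdots \tilde R_{n_\ell}(s)$ and $\tilde R_n(s)v := R_{\mu_{\overline{\phi+s\psi}}}(1_{\{\tau=n\}}v)$ is the component at return time $n$ of the $\mu_{\overline{\phi+s\psi}}$-transfer operator of $F$.

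At $s=0$ the assumptions (H1)(b'), (P1'), (P2') are precisely those of~\cite[Theorem~1.1]{Gouezel11} (see also~\cite{MT12}), so the operator renewal theorem applies on $\B$ and yields the null recurrent asymptotic $n^{1-\beta}\int_Y T_n(0)v\cdot w\, d\mu_{\bar\phi} \to \int_Y v\, d\mu_\phi \int_Y w\, d\mu_\phi$ (using $\mu_\phi|_Y = \mu_{\bar\phi}$ from the first return property, with the renewal constant absorbed into the normalisation of $\mu_\phi$). To push this to $s>0$ I would feed the bound $\|\tilde R(e^{i\theta},s)-\tilde R(e^{i\theta},0)\|_\B \ll s^{\beta-\eps}$, a uniform-in-$\theta$ variant of Lemma~\ref{lemma-rus} obtained from (H3)(iii), into the Fourier inversion
\[
T_n(s) \;=\; \frac{1}{2\pi}\int_{-\pi}^{\pi} e^{-in\theta}\bigl(I-\tilde R(e^{i\theta},s)\bigr)^{-1}\tilde R(e^{i\theta},s)\, d\theta
\]
used in~\cite{Gouezel11, MT12}. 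Splitting the $\theta$-integral into a boundary contribution near $\theta=0$ (governed by $1-\lambda(\theta,s)\sim c\Gamma(1-\beta)(-i\theta)^\beta$ coming from (H1)(b')) and a bulk part (of size exponentially small in $n$ by (P2')(ii)) would lead to the quantitative comparison
\[
n^{1-\beta}\bigl\|T_n(s)v - T_n(0)v\bigr\|_{L^1(\mu_{\bar\phi})}\;\ll\; s^{\beta-\eps}\,\|v\|_\B.
\]

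Finally, replacing $\mu_{\overline{\phi+s\psi}}$ by $\mu_{\bar\phi}$ at cost $O(s^{\beta-\eps})$ via Lemma~\ref{lemma-contmeas} yields
\[
n^{1-\beta}\int_Y v\cdot w\circ f^n\, d\mu_{\phi+s\psi} \;=\; \int_Y v\, d\mu_\phi\int_Y w\, d\mu_\phi \;+\; O\bigl(s^{\beta-\eps}\, n^{1-\beta}\bigr),
\]
and the rate assumption $s = o(n^{-(1-\beta)/(\beta-\eps)})$ is precisely what forces $s^{\beta-\eps}n^{1-\beta} \to 0$. The main obstacle is the uniform-in-$s$ Fourier--Tauberian step: one must track the $s$-dependence of the near-one eigenvalue $\lambda(e^{i\theta},s)$ of $\tilde R(e^{i\theta},s)$ around the whole circle and in particular verify that its behaviour near $\theta=0$ is not disturbed at an order larger than the permitted $s^{\beta-\eps}$, so that the regular variation producing the $n^{\beta-1}$ rate survives the perturbation uniformly.
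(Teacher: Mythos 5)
Your proposal assembles the right ingredients (the first-return identity, the $s=0$ operator renewal asymptotic~\eqref{eq-Tn}, Lemma~\ref{lemma-contmeas}, and the rate computation $s^{\beta-\eps}n^{1-\beta}\to 0$), but it takes a considerably more elaborate route than the paper's proof, and the extra machinery you build is avoidable. You set up a perturbed renewal operator $T_n(s)$ associated with $R_{\mu_{\overline{\phi+s\psi}}}$ and then propose to prove a uniform-in-$s$ Fourier--Tauberian estimate $n^{1-\beta}\|T_n(s)v-T_n(0)v\|_{L^1(\mu_{\bar\phi})}\ll s^{\beta-\eps}\|v\|_\B$, which you yourself flag as the ``main obstacle.'' That step is indeed delicate --- but it is also unnecessary. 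The paper's observation is simpler: since $v$ and $w$ are both supported on $Y$ and $\tau$ is a first return, the integrand $v\cdot w\circ f^n$ is one fixed bounded function supported on $Y$, independent of $s$; the \emph{only} $s$-dependence in $\int_Y v\,w\circ f^n\,d\mu_{\phi+s\psi}=\int_Y v\,w\circ f^n\,d\mu_{\overline{\phi+s\psi}}$ is in the measure. One therefore splits
\[
\int_Y v\,w\circ f^n\,d\mu_{\overline{\phi+s\psi}}
=\int_Y v\,w\circ f^n\,d\mu_{\bar\phi}+K_{v,w}(s),\qquad
K_{v,w}(s):=\int_Y v\,w\circ f^n\,\bigl(d\mu_{\overline{\phi+s\psi}}-d\mu_{\bar\phi}\bigr),
\]
applies \eqref{eq-Tn} only to the first term (which is the already-known $s=0$ renewal asymptotic), and bounds $|K_{v,w}(s)|\ll\|v\|_\B\,|w|_\infty\,s^{\beta-\eps}$ directly by Lemma~\ref{lemma-contmeas}. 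There is no need to track the $\theta$-dependence of the perturbed near-unit eigenvalue or to prove any uniformity of the renewal expansion in $s$. In short: you correctly identified Lemma~\ref{lemma-contmeas} as the tool controlling the $s$-dependence, but you used it only to swap measures at the end of a perturbed renewal argument, when in fact swapping the measure at the very start eliminates the need for the perturbed renewal argument altogether. If you do want to pursue the uniform-in-$s$ renewal route (which might be of interest if $v$, $w$ were \emph{not} supported on $Y$, cf.\ Remark~\ref{rmk:7.2}), you would still need to justify the Fourier--Tauberian step carefully; as stated it is an announcement rather than a proof.
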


\begin{rmk}\label{rmk:7.2}
Using arguments as in~\cite{BT16}, one can deal with larger classes of 
functions and the case when $\tau$ is a general return time, but this goes beyond the scope of this paper.
\end{rmk}

We recall that the asymptotics of the correlation function for infinite measure preserving systems can be obtained 
via the use of operator renewal theory as in~\cite{Gouezel11, MT12}
(introduced in the context of dynamical system in~\cite{Sarig02} to be obtain lower bounds for the correlation 
decay for finite measure preserving systems).
Put $T_nv=1_Y L_\phi^n(1_Yv)$ and recall that the relationship $T_n=\sum_{j=1}^n T_{n-j}R_j$
generalises the notion of scalar renewal sequences (see~\cite{Feller66, BGT} and references therein).
Consider the following conditions:
\begin{itemize}
\item[(a)] $\beta\in (1/2, 1)$ and (P1);
\item[(b)] $\beta\in (0,1)$ and (P1').
\end{itemize}

Using (a), Theorem~\ref{thm-mix},~\cite[Theorem 1.1]{MT12}
establishes that for all $v\in\B$,
\begin{equation}
\label{eq-Tn}
\lim_{n\to\infty} n^{1-\beta}T_n v=C_\beta\int_Y v\, d\mu_{\bar\phi},
\end{equation}
uniformly on $Y$, where $C_\beta$ is a constant that depends on $\beta$ and the value of 
the density $h=\frac{d\mu_{\bar\phi}}{d m_{\bar\phi}}$ at some point $y\in Y$, where $m_{\bar\phi}$ is the 
$\bar\phi$-conformal measure for the induced map $F$.
Equally importantly,~\cite[Theorem 1.1]{Gouezel11}  shows that~\eqref{eq-Tn} holds under (b).

In what follows we use~\eqref{eq-Tn} to obtain the asymptotics of $\int_Y v w\circ f^n \, d\mu_{\phi+s\psi}$. 
Since we want to let $s\to 0$ we will also use  Lemma~\ref{lemma-contmeas}.

\begin{pfof}{Theorem~\ref{thm-mix}} Since $v, w$ are supported on $Y$, we write
\begin{align*}
\int_Y v w\circ f^n \, d\mu_{\phi+s\psi}=\int_Y v w\circ f^n \, d\mu_{\overline{\phi+s\psi}}
=\int_Y v w\circ f^n \,d\mu_{\bar\phi} +K_{v,w}(s),
\end{align*}
where $K_{v,w}(s)=\int_Y v w\circ f^n \, (d\mu_{\overline{\phi+s\psi}}-d\mu_{\bar\phi})$.
Recall $v\in\B$ and $w$ is a bounded function supported on $Y$. Using Lemma~\ref{lemma-contmeas},
\begin{align*}
|K_{v,w}(s)|\ll\|v\|_{\B}\sup_{y\in Y}|w(y)|\Big|\int_Y  (d\mu_{\overline{\phi+s\psi}}-d\mu_{\bar\phi})\Big|\ll \|v\|_{\B}\sup_{y\in Y}|w(y)| s^{\beta-\eps}.
\end{align*}
 By~\eqref{eq-Tn},
\[
\lim_{n\to\infty}n^{1-\beta}\int_Y v w\circ f^n \,d\mu_{\bar\phi}\to C_\beta \int_Y v\, \,d\mu_{\bar\phi}\int_Y w\, \,d\mu_{\bar\phi}.
\]
By assumption, $s=o(n^{-(1-\beta)/(\beta-\eps)})$, so $s^{\beta-\eps}=o(n^{-(1-\beta)})$ and the conclusion follows.
\end{pfof}

\subsection{Arcsine law as $s\to 0$}

In this section we briefly spell out some consequences of
Lemma~\ref{lemma-contmeas} for limit laws in this null recurrent limiting
potential setting.

We will be interested in an arcsine law (studied for dynamical systems
with infinite measure by Thaler, in for instance,~\cite{Thaler00}, see
also references therein) w.r.t.\ the limit measure $\lim_{s\to 0}
\mu_{\overline{\phi+s\psi}}$.

For $n\ge1$ and $x\in \bigcup_{j=0}^n f^{-j}Y$, let
\[
Z_n(x)=\max\{0\leq j\leq n:f^jx\in Y\}
\]
denote the time of the last visit of the orbit of $x$ to
$Y$ during the time interval $[0,n]$.

Let $\zeta_{\beta}$ denote a random variable distributed according to
the $\mathcal{B}(1-\beta,\beta)$ distribution:
\[
\P(\zeta_\beta\leq t)=
\frac{1}{\pi}\sin\beta\pi  \int_0^t\frac{1}{u^{1-\beta}}\frac{1}{(1-u)^{\beta}}\,du,
\quad t\in [0,1].
\]

\begin{prop}\label{prop-arcsine}
Assume the set-up of Theorem~\ref{thm-mix} with either (a) or (b).
Let $\nu_s$ be an absolutely continuous probability measure on $Y$ with
density $g_s\in\mathcal{B}$. Moreover, suppose that there exists
$g\in\B$ such that as $s\to 0$, $\|g_s-g\|\ll s^{\beta-\eps}$, for any $\eps<\beta$.

Then
\[
\lim_{\stackrel{s\to 0}{n\to\infty}}\nu_s\left\{\frac1n Z_n\leq t\right\}=\P(\zeta_\beta\leq t),
\]
where the simultaneous limit is taken such that $s=o(n^{-(1-\beta)/(\beta-\eps)})$, for $\eps>0$ arbitrarily small.
\end{prop}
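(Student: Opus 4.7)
The strategy mirrors the proof of Theorem~\ref{thm-mix}: handle the ``frozen'' null recurrent limit via the operator renewal theorem, and control the perturbation to the family $\nu_s$ via the assumed $\B$-bound $\|g_s - g\| \ll s^{\beta-\eps}$. The rate condition on $s$ versus $n$ plays the same balancing role as in Theorem~\ref{thm-mix}.

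First I would establish the arcsine law for the fixed reference measure $\nu$ with density $g\in\B$. Since $\tau$ is a first return time by (H3)(i), decomposing by the time of last visit and dualising against $L_{\mu_\phi}^k$,
\begin{align*}
\nu\{Z_n/n\le t\} = \sum_{k=0}^{\lfloor tn\rfloor} \int_Y 1_{\{\tau>n-k\}}(y)\, T_k g(y)\, d\mu_{\bar\phi}(y),
\end{align*}
where $T_k = 1_Y L_{\mu_\phi}^k 1_Y$. By the operator renewal asymptotic~\eqref{eq-Tn}, $T_k g(y)\sim C_\beta k^{\beta-1}\int_Y g\, d\mu_{\bar\phi}$ uniformly in $y\in Y$, while (H1)(b') gives $\mu_{\bar\phi}(\tau>n-k)\sim c(n-k)^{-\beta}$. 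Substituting these into the sum and recognising a Riemann sum in $k/n$ yields
\begin{align*}
\nu\{Z_n/n\le t\} \to C_\beta\, c \int_0^t u^{\beta-1}(1-u)^{-\beta}\, du,
\end{align*}
and evaluating at $t=1$ forces $C_\beta c = \pi^{-1}\sin(\beta\pi)$, producing exactly $\P(\zeta_\beta\le t)$. The small-$k$ and small-$(n-k)$ contributions (where the asymptotics are not yet accurate) are negligible by the uniform bound on $T_k g$ and standard tail estimates.

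Next I would transfer to $\nu_s$. Since $\B\hookrightarrow L^\infty(\mu_{\bar\phi})$ and $\mu_{\bar\phi}(Y)<\infty$, for every measurable $A\subset Y$,
\begin{align*}
|\nu_s(A) - \nu(A)| \le \int_Y|g_s - g|\, d\mu_{\bar\phi} \ll \|g_s - g\|_\B \ll s^{\beta-\eps}.
\end{align*}
Taking $A=\{Z_n/n\le t\}$ and combining with the $s=0$ limit above gives
\begin{align*}
\bigl|\nu_s\{Z_n/n\le t\} - \P(\zeta_\beta\le t)\bigr| \ll s^{\beta-\eps} + o_n(1).
\end{align*}
The hypothesis $s=o(n^{-(1-\beta)/(\beta-\eps)})$ guarantees $s^{\beta-\eps}=o(n^{-(1-\beta)})$, hence the right-hand side vanishes in the joint limit.

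The main obstacle is the first step: establishing cleanly the arcsine law for $\nu$ under the abstract set-up (a) or (b). If a Thaler-type result (e.g.~\cite{Thaler00}) applies verbatim it can be invoked directly; otherwise the Dynkin--Lamperti renewal calculation based on~\eqref{eq-Tn} sketched above has to be carried out in full, and is the essential technical content of the proof. Once that step is in place, the passage from $\nu$ to $\nu_s$ is a straightforward perturbation argument as above.
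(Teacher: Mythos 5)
Your proposal matches the paper's strategy: the paper simply invokes \cite[Corollary~2]{Thaler00} (with the error-rate variants \cite[Corollary~9.10]{MT12}, \cite[Corollary~3.5]{T15}) for the frozen arcsine law, then transfers to $\nu_s$ using $\|g_s-g\|\ll s^{\beta-\eps}$ together with Lemma~\ref{lemma-contmeas}, which is precisely what you do (you additionally sketch the Dynkin--Lamperti renewal calculation behind Thaler's result rather than citing it). One small discrepancy worth noting: the paper's appeal to Lemma~\ref{lemma-contmeas} indicates that $g_s$ is meant as a density with respect to $\mu_{\overline{\phi+s\psi}}$, so the transfer estimate requires controlling both $\int|g_s-g|\,d\mu_{\overline{\phi+s\psi}}$ and $\bigl|\int g\,(d\mu_{\overline{\phi+s\psi}}-d\mu_{\bar\phi})\bigr|$; each is $O(s^{\beta-\eps})$, so the conclusion is unchanged.
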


\begin{proof} The proof goes word for word as~\cite[Corollary
2]{Thaler00} (see also~\cite[Corollary 9.10]{MT12},\cite[Corollary
3.5]{T15} for error rates)
with the additional use of $\|g_s-g\|\ll s^{\beta-\eps}$ and
Lemma~\ref{lemma-contmeas}.~\end{proof}

Versions of the Darling-Kac law, where the involved distributional
convergence is with respect to any limit measure $\lim_{s\to 0} \nu_s$ as in the
statement of Proposition~\ref{prop-arcsine}, can be obtained using the
classical results in~\cite{Aaronson} and the present
Lemma~\ref{lemma-contmeas}.

\section{Examples}\label{sec:ex}

We first note that in all of our examples, \eqref{eq-reind} holds since either we are dealing with first return maps, or our reinducing time is a first return map on a finite measure set.

\subsection{Checking the liftability condition}

When the induced system has the BIP property, many of the conditions we assume in the abstract 
set-up hold automatically. Indeed, under the full branches condition, 
any sufficiently smooth ($SV$) induced potential has an RPF measure which satisfies the Gibbs property.
The next lemma, a version of \cite[Lemma 4.1]{IomTod10}, goes a little way to ensuring the 
liftability condition required in (H2). We write $\uu$ for the potential; this can be $\phi$ or $\phi + s\psi$
as the application requires.

\begin{lemma}\label{lem:induced p nonpos}
Suppose that $(Y, F)$ has the BIP property, and $\bar\uu\in SV$. 
If $P(\uu)<\infty$ then $P(\overline{\uu-P(\uu)})\le 0$.
\end{lemma}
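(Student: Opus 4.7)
The plan is to combine Abramov's formula with the variational principle for the induced Markov system. Since $(Y,F)$ has the BIP property and $\bar\uu\in SV$, the variational principle for the Gurevich pressure (Sarig \cite{Sarig01}) gives
\[
P(\overline{\uu-P(\uu)}) = \sup_{\bar\nu}\left\{ h_{\bar\nu}(F) + \int_Y(\bar\uu - P(\uu)\,\tau)\,d\bar\nu\right\},
\]
where the supremum runs over $F$-invariant probability measures $\bar\nu$ satisfying the standard integrability condition on the negative part of the potential. It therefore suffices to show that for every such $\bar\nu$ the bracketed quantity is non-positive.

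First I would dispense with the easy case $\int_Y\tau\,d\bar\nu<\infty$. Here the pull-back via \eqref{eq:pullback} gives a finite $f$-invariant measure whose normalization $\nu$ is an $f$-invariant probability. Abramov's formula for entropies and integrals then yields
\[
h_{\bar\nu}(F) + \int\bar\uu\,d\bar\nu = \left(\int\tau\,d\bar\nu\right)\left( h_\nu(f) + \int\uu\,d\nu\right) \le \left(\int\tau\,d\bar\nu\right)P(\uu),
\]
the inequality coming directly from the variational definition \eqref{eq:Pvar} of $P(\uu)$, which is finite by hypothesis. Rearranging gives exactly $h_{\bar\nu}(F)+\int\overline{\uu-P(\uu)}\,d\bar\nu\le 0$.

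The remaining case $\int\tau\,d\bar\nu=\infty$ is the delicate one, and is where I expect the main obstacle to sit. If $P(\uu)>0$ then $-P(\uu)\int\tau\,d\bar\nu=-\infty$ while $\int\bar\uu\,d\bar\nu<+\infty$ (by the integrability side-condition in the variational principle) and $h_{\bar\nu}(F)$ is non-negative but finite on atomic cylinders, so the sum is $-\infty$ and we are done. When $P(\uu)\le 0$ the trivial estimate fails and one has to work. My plan here is a truncation argument: for each $N$ consider the conditional measure $\bar\nu_N := \bar\nu(\,\cdot\,\mid\{\tau\le N\})$ on the induced system built from the first-return-to-$\{\tau\le N\}$ map, for which the return time is integrable. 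Applying the finite case to $\bar\nu_N$ and using that entropy and integrals against $\bar\uu-P(\uu)\tau$ behave monotonically under this exhaustion (via the Abramov identity and Kac's lemma, together with $\bar\uu\in SV$ to control variations), one passes to the limit $N\to\infty$ to conclude $h_{\bar\nu}(F)+\int(\bar\uu-P(\uu)\tau)\,d\bar\nu\le 0$. Taking the supremum over $\bar\nu$ closes the argument.
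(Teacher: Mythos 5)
The paper states this lemma without proof, citing it only as ``a version of \cite[Lemma 4.1]{IomTod10}'', so there is no in-paper argument to compare against. Your main idea, combining the variational principle for the Gurevich pressure with Abramov's formula, is the natural route, and your treatment of the case $\int_Y\tau\,d\bar\nu<\infty$ is correct: pulling back $\bar\nu$ via \eqref{eq:pullback} and normalising gives an $f$-invariant probability $\nu$ with $\int\uu\,d\nu>-\infty$, and the Abramov--Kac identities plus the definition \eqref{eq:Pvar} of $P(\uu)$ yield $h_{\bar\nu}(F)+\int\overline{\uu-P(\uu)}\,d\bar\nu\le 0$.

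However, your handling of the remaining case $\int_Y\tau\,d\bar\nu=\infty$ does not hold up. When $P(\uu)>0$, the admissibility constraint in the variational principle only says $\int\bigl(\bar\uu-P(\uu)\tau\bigr)^-\,d\bar\nu<\infty$, i.e.\ the integral of the composite potential is $>-\infty$; it gives no separate control over $\int\bar\uu\,d\bar\nu$, and in particular it does not force $\int\bar\uu\,d\bar\nu<+\infty$, so the ``$-\infty$ plus something finite'' conclusion is unjustified. When $P(\uu)\le 0$, the truncation you propose produces the first-return map $F_N$ of $F$ to $\{\tau\le N\}$ with conditional measure $\bar\nu_N$; but the $f$-inducing time for $F_N$ is $\tau_N=\sum_{j=0}^{\rho_N-1}\tau\circ F^j$ with $\int\tau_N\,d\bar\nu_N=\int\tau\,d\bar\nu/\bar\nu(\{\tau\le N\})=\infty$, so the truncation reintroduces exactly the obstruction it was supposed to remove, and the asserted monotonic passage to the limit is not justified.

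The clean way to close the gap is to avoid the case $\int\tau=\infty$ altogether by invoking Sarig's compact approximation of the Gurevich pressure \cite{Sarig99}: for a topologically mixing countable Markov shift and a potential with summable variations, $P_G$ equals the supremum of the classical topological pressure over compact ($=$ finite-state) $F$-invariant subsystems. On such a subsystem $\tau$ is bounded, so every invariant probability supported there has $\int\tau<\infty$, and your first argument applies verbatim; taking suprema gives the pressure on each compact subsystem to be $\le 0$ and hence $P(\overline{\uu-P(\uu)})\le 0$. This is essentially the mechanism behind \cite[Lemma 4.1]{IomTod10}.
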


We next give a setting where we can prove also 
$P(\overline{\uu-P(\uu)})\ge 0$ which will be relevant for our examples, 
and gives us a criterion to check liftability. Note that if we know we have an equilibrium state 
which lifts to $(Y, F)$ the following holds immediately.

\begin{lemma} \label{lem:lift_shadow}
Suppose that $(Y, F)$ has the BIP property and $P(\uu)\in (0, \infty)$, 
$\bar\uu\in SV$ and $P(\bar\uu)<\infty$.  
Suppose further that for each $\eps>0$ there is an ergodic $\mu\in \M$ which lifts 
to $(Y, F)$ with $h_\mu(f)+\int\uu~d\mu> P(\uu)-\eps$.  Then 
$P(\overline{\uu-P(\uu)})=0$.
\end{lemma}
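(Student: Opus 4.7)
The plan is to combine the upper bound $P(\overline{\uu-P(\uu)}) \le 0$ from Lemma~\ref{lem:induced p nonpos} with a matching lower bound obtained by lifting the near-optimal measures to $(Y,F)$ and invoking Abramov's formula. The positivity of $P(\uu)$ together with $P(\bar\uu)<\infty$ will be used in a single critical spot to make the limiting $\eps\to 0$ argument work, by providing a uniform bound on the expected inducing time.

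First I would fix a small $\eps>0$ and select $\mu=\mu_\eps\in\M$ satisfying the hypothesis, writing $\bar\mu=\bar\mu_\eps$ for the corresponding $F$-invariant probability on $Y$ from which $\mu$ is pulled back. Set $Q:=\int_Y \tau\, d\bar\mu<\infty$. Abramov's formula, as recalled in the discussion following the definition of liftability, gives
\[
Q\left(h_\mu(f)+\int\uu\, d\mu\right) = h_{\bar\mu}(F)+\int\bar\uu\, d\bar\mu.
\]
Combined with the hypothesis $h_\mu(f)+\int\uu\, d\mu>P(\uu)-\eps$, this yields $h_{\bar\mu}(F)+\int\bar\uu\, d\bar\mu > Q(P(\uu)-\eps)$. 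The left-hand side is at most $P(\bar\uu)<\infty$ by the variational principle, so for $\eps<P(\uu)/2$ we deduce the uniform bound
\[
Q \le \frac{P(\bar\uu)}{P(\uu)-\eps} \le \frac{2P(\bar\uu)}{P(\uu)} =: M.
\]
This is the crucial step; it is exactly here that the hypotheses $P(\uu)>0$ and $P(\bar\uu)<\infty$ enter together.

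Finally, using $\overline{\uu-P(\uu)}=\bar\uu-P(\uu)\tau$ and Abramov's formula once more,
\[
h_{\bar\mu}(F)+\int\overline{\uu-P(\uu)}\, d\bar\mu = Q\left(h_\mu(f)+\int\uu\, d\mu - P(\uu)\right) > -\eps Q \ge -\eps M.
\]
Taking the supremum over admissible $\bar\mu$ yields $P(\overline{\uu-P(\uu)})>-\eps M$, and letting $\eps\to 0$ gives $P(\overline{\uu-P(\uu)})\ge 0$; combined with Lemma~\ref{lem:induced p nonpos}, this produces the claimed equality. The main obstacle I anticipate is precisely the uniform bound on $Q_\eps$: without positivity of $P(\uu)$ the error $\eps Q_\eps$ would be uncontrolled and the argument would collapse, which explains why that positivity hypothesis is built into the statement.
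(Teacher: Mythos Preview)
Your proof is correct, and it takes a genuinely different route from the paper's.

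The paper argues indirectly: it first observes that $p\mapsto P(\overline{\uu-p})$ is continuous, strictly decreasing (since $\tau\ge 1$), equal to $P(\bar\uu)>0$ at $p=0$, and tends to $-\infty$; hence there is a unique $p>0$ with $P(\overline{\uu-p})=0$. It then takes the RPF (Gibbs) measure $\bar\mu'$ for $\overline{\uu-p}$, uses the Gibbs property to show $\int\tau\,d\bar\mu'<\infty$, projects to obtain an $f$-invariant probability $\mu'$ with $h_{\mu'}+\int\uu\,d\mu'=p$, whence $p\le P(\uu)$. Finally, if $p<P(\uu)$, a liftable measure with free energy exceeding $p$ (guaranteed by the hypothesis with $\eps=P(\uu)-p$) lifts to contradict $P(\overline{\uu-p})=0$.

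Your approach is more elementary and direct: you bypass the intermediate-value construction and the RPF/Gibbs machinery entirely, and instead bound $P(\overline{\uu-P(\uu)})$ from below using the lifted near-optimal measures themselves. The clever step is the uniform bound $Q_\eps\le 2P(\bar\uu)/P(\uu)$, which turns the multiplicative Abramov error $-\eps Q_\eps$ into something that vanishes with $\eps$; this is exactly where the hypotheses $P(\uu)>0$ and $P(\bar\uu)<\infty$ are consumed. What the paper's argument buys, by contrast, is the explicit construction of an equilibrium state $\mu'$ for $\uu$ as the projection of the RPF measure, a byproduct your argument does not provide. One small cosmetic point: the phrase ``taking the supremum over admissible $\bar\mu$'' is unnecessary, since the single $\bar\mu_\eps$ already witnesses $P(\overline{\uu-P(\uu)})>-\eps M$ via the variational inequality.
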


\begin{proof}
We aim to show that $P(\overline{\uu-P(\uu)})= 0$.   Let $\bar\mu$ be the RPF measure for $\overline{\uu}$.  
First we claim $P(\bar\uu)> 0$.
Indeed, since $P(\uu)> 0$, there is a measure $\nu$ with $h(\nu)+\int\uu~d\nu>0$, which, 
by Abramov's formula, induces to a measure $\bar\nu$ with $h(\bar\nu)+\int\bar\uu~d\bar\mu>0$. This proves the claim.

Since $\tau\ge 1$, we must have
$P(\overline{\uu-p})\le P(\overline{\uu})-p<\infty$ for all $p>0$ and $p\mapsto P(\overline{\uu-p})$ is a 
continuous decreasing function tending to $-\infty$ as $p\to \infty$. 
Hence, also using the claim and Lemma~\ref{lem:induced p nonpos}, there exists $p>0$ such that $$P(\overline{\uu-p})=0.$$
So the lemma follows if we can show that $p= P(\uu)$.  Let $\bar\mu'$ be the RPF measure 
for the potential  $\overline{\uu-p}$. 
Let $\{ Y_i \}_{i \geq 1}$ be the Markov partition of the induced map.
The Gibbs property applied to $\bar\mu'$ and $\bar\mu$ respectively gives
\begin{eqnarray*}
\int\tau~d\bar\mu' &\asymp& \sum_nn \bar\mu'(\tau=n)
\asymp \sum_n \sum_{\tau(Y_i)=n} e^{S_n\uu-np} \\
&=& \sum_n n e^{-np} \sum_{\tau(Y_i)=n} e^{S_n\uu}
\asymp \sum_n n e^{-np}  \bar\mu(\tau=n) <\infty.
\end{eqnarray*}
This projects to a measure $\mu'$ with
$$h_{\mu'}+\int \uu-p~d\mu=\frac1{\int\tau~d\bar\mu'}\left(h_{\bar\mu'}+\int \overline{\uu-p}~d\bar\mu'
\right)=0.$$
(Note that this also shows that $\mu'$ is an equilibrium measure.)
So $p\le P(\uu)$.  If $p< P(\uu)$ then there is a measure $\nu$ with $h_\nu+\int \phi+s\psi-p~d\nu>0$.  
So $\nu$ induces to a measure $\bar\nu$ which by Abramov's formula has $h_{\bar\nu}+\int \overline{\uu-p}~d\bar\nu>0$, a contradiction.  
Hence $p= P(\uu)$, proving the lemma. 
\end{proof}

\subsection{Checking $P(\phi + s\psi) = o(s)$}

In Theorems~\ref{thm:PbarP} and \ref{thm:PbarP2} we require that
$s\ll (P(\phi+s\psi))^a$ for some $a>0$.  The following lemma guarantees this in many settings.

\begin{lemma}
Suppose that $f:X \to X$ has a BIP induced map $F = f^\tau:Y \to Y$, such that for $\phi$ 
as in Section~\ref{sec:psi}, $\mu_{\bar\phi}(\tau=n) \lesssim n^{-(1+\beta)}$
with $\beta \in (0,1)$. 
Suppose that $\phi$ is a potential with $P(\phi+s\psi)>0$ for $s>0$, $\bar\psi\in SV$
with $\int \bar\psi \, d\mu_{\bar\phi} > 0$, and there is $C>0$ and $N\in \N$ 
such that $\bar\psi|_{\{ \tau = n \}} \leq C \log n$ for $n\ge N$.  
Then for any $\eps>0$, there is $C_0>0$ such that for all $s \ge 0$, 
$P(\phi+s\psi)\ge  C_0s^{1/(\beta-\eps)}$.
\end{lemma}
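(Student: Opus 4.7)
The plan is to combine a variational lower bound for the induced pressure $P(\overline{\phi+s\psi})$ with a perturbation estimate on the leading eigenvalue $\lambda(u,s)$ of $R(u,s)$, then to read off the desired bound from the equation $\lambda(P(\phi+s\psi),s)=1$.

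Since $P(\phi)=0$ and $(Y,F)$ is $\phi$-liftable, $P(\bar\phi)=0$ and $\mu_{\bar\phi}$ is the equilibrium state for $\bar\phi$, so $h_{\mu_{\bar\phi}}(F)+\int\bar\phi\,d\mu_{\bar\phi}=0$. Applying the variational principle for $F$ to $\mu_{\bar\phi}$ gives
\[
P(\overline{\phi+s\psi})\;\ge\;h_{\mu_{\bar\phi}}(F)+\int(\bar\phi+s\bar\psi)\,d\mu_{\bar\phi}\;=\;s\,c_\psi,\qquad c_\psi:=\int\bar\psi\,d\mu_{\bar\phi}>0,
\]
where the integral makes sense because $\bar\psi^+\lesssim\log\tau\in L^1(\mu_{\bar\phi})$ (using $\mu_{\bar\phi}(\tau=n)\lesssim n^{-(1+\beta)}$); if $c_\psi=+\infty$ then $P(\overline{\phi+s\psi})=\infty$ and the lemma is trivial. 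Identifying $P(\overline{\phi+s\psi})=\log\lambda(0,s)$ as in Section~\ref{sec:ev} yields $\lambda(0,s)\ge 1+\tfrac12 c_\psi s$ for all sufficiently small $s>0$.

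The crux is a one-sided operator estimate. Writing $R(u,s)-R(0,s)=\sum_{n\ge 1}R_n(e^{-un}-1)e^{s\bar\psi}$, using the pointwise bound $e^{s\bar\psi}\le n^{sC}$ on $\{\tau=n\}$ for $n\ge N$, and $\|R_n\|\lesssim n^{-(1+\beta)}$, I would estimate
\[
\|R(u,s)-R(0,s)\|\;\lesssim\;\sum_{n\ge 1}(1-e^{-un})\,n^{sC-1-\beta}\;\lesssim\;u^{\beta-sC},
\]
by splitting the sum at $n\sim 1/u$ and using $1-e^{-un}\le\min(un,1)$. For any fixed $\eps>0$ and all $s\le\eps/C$ this is $O(u^{\beta-\eps})$, and standard perturbation theory for the simple isolated eigenvalue $\lambda(0,s)$ (whose uniform spectral gap for small $s$ follows from $\|R(0,s)-R(0,0)\|=o(1)$, proved as in Lemma~\ref{lemma-rus}) transfers the bound to $|\lambda(u,s)-\lambda(0,s)|\lesssim u^{\beta-\eps}$.

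Setting $u^{*}:=P(\phi+s\psi)>0$, positive recurrence (Lemma~\ref{lem:rec}) yields $P(\overline{\phi+s\psi-u^{*}})=0$, i.e.\ $\lambda(u^{*},s)=1$, so
\[
\tfrac12 c_\psi\,s\;\le\;\lambda(0,s)-1\;=\;\lambda(0,s)-\lambda(u^{*},s)\;\lesssim\;(u^{*})^{\beta-\eps},
\]
which rearranges to $u^{*}\ge C_0\,s^{1/(\beta-\eps)}$ with $C_0$ depending on $c_\psi$ and the implicit constant. The main obstacle is the operator estimate: because (H1)(b) and (H2) are not among the hypotheses, Lemma~\ref{lemma-rus} cannot be invoked verbatim, and the factor $e^{s\bar\psi}$ must be absorbed into the tail sum in a way that preserves the $u^{\beta-\eps}$ decay uniformly in small $s$; the $\eps$-loss in the exponent comes precisely from the $sC$-correction to $\beta$ in the summation.
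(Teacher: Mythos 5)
Your proposal is correct in outline but takes a genuinely different route from the paper. The paper's proof never touches the spectral perturbation machinery: it fixes $\delta\in(0,1)$ and works with the RPF measure $\mu_s^\delta$ of the induced potential $\overline{\phi+s\psi-\delta p(s)}$, where $p(s)=P(\phi+s\psi)$, uses the Gibbs property together with the hypotheses $\mu_{\bar\phi}(\tau=n)\lesssim n^{-(1+\beta)}$ and $\bar\psi|_{\{\tau=n\}}\le C\log n$ to estimate $\int\tau\,d\mu_s^\delta\lesssim p(s)^{\beta-1-sC}$, and then invokes the variational principle together with Abramov's formula to obtain
\[
(1-\delta)p(s)\;\ge\;\frac{P(\overline{\phi+s\psi-\delta p(s)})}{\int\tau\,d\mu_s^\delta},
\]
where the numerator is bounded below by $Ks/2$ for small $s$ (via $\int\bar\psi\,d\mu_{\bar\phi}>0$ and analyticity of $(s,z)\mapsto P(\overline{\phi+s\psi-z})$). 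Rearranging gives $p(s)^{\beta-sC}\gg s$, hence $p(s)\gg s^{1/(\beta-\eps)}$. Your argument instead works on the eigenvalue $\lambda(u,s)$ of $R(u,s)$, proving $\lambda(0,s)-1\gtrsim s$ from the variational lower bound and $|\lambda(u,s)-\lambda(0,s)|\lesssim u^{\beta-\eps}$ from a tail-weighted operator estimate, and reads off the conclusion from $\lambda(p(s),s)=1$. The two proofs deploy the same two quantitative inputs (a linear-in-$s$ lower bound on $P(\overline{\phi+s\psi})$, and a tail bound converting the $e^{-u\tau}$ damping into a $u^{\beta-sC}$ estimate), but realised in parallel frameworks: Gibbs property plus Abramov in the paper, spectral perturbation in yours. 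Your variational derivation of $P(\overline{\phi+s\psi})\ge s\int\bar\psi\,d\mu_{\bar\phi}$ is cleaner than the paper's appeal to analyticity and convexity; on the other hand the paper's route is self-contained under exactly the lemma's hypotheses, whereas yours leans on the operator estimates and the identity $\lambda(p(s),s)=1$ that are established elsewhere in the paper under (H1), (H2) and liftability — assumptions not listed in this lemma, as you correctly flag. That last point is easy to patch: you only need $\lambda(p(s),s)\le 1$, which follows from Lemma~\ref{lem:induced p nonpos} (just BIP and $SV$), so full liftability is not required; but the uniform spectral gap for $R(0,s)$ and the bound $\|R_n(e^{s\bar\psi}\cdot)\|\lesssim n^{sC}\|R_n\|$ on the H\"older norm (not just sup norm) still need to be justified from $\bar\psi\in SV$ and the one-sided log bound alone, which is where the honest extra work lies.
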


\begin{proof}
We label the domains of $F$ by $Y_i$ and for each $i$, let $x_i$ be some point in $Y_i$.
Write $p(s) = P(\phi+s\psi)$.
For $\delta \in [0,1)$, let $\mu_s^\delta$ be the RPF measure of $\overline{\phi+s\psi-\delta p(s)}$.
This yields 
\begin{align*}
 \int_Y \tau \, d\mu_s^\delta &\asymp \sum_{n\ge 1}n \sum_{\tau(Y_i)=n}  
 e^{\overline{\phi+s\psi-\delta p(s)}(x_i)}
  \lesssim  \sum_{n=1}^\infty n\mu_{\bar\phi}(\tau=n)
  \sum_{\tau(Y_i)=n} e^{\overline{s\psi-\delta p(s)}(x_i)} \\
 &\lesssim  \sum_{n=1}^\infty \frac{n}{n^{1+\beta}} n^{sC} e^{-n\delta p(s)} 
  \asymp \int_1^\infty x^{-\beta+sC} e^{-x\delta p(s)} \, dx \\
 &= (\delta p(s))^{\beta-1-sC } \int_{\delta p(s)}^\infty y^{-\beta+sC } e^{-y} \, dy 
   \sim \delta^{\beta-1-sC}\, \Gamma(1-\beta+sC)\, p(s)^{\beta-1-sC}
\end{align*}
as $s\to 0$. 
In particular, $\int \tau \, d\mu^\delta_s < \infty$, so by \cite[Theorem 2]{Sarig01a},
$\mu^\delta_s$ is an equilibrium state. Moreover, $\mu^\delta_s$ projects 
to an $f$-invariant measure $\nu^\delta_s$.
By the definition of pressure and Abramov's formula,
\begin{eqnarray*}
(1-\delta) p(s) &\geq& 
\left( h(\nu_s^\delta) + \int \phi + s\psi\, d\nu_s^\delta\right) - \delta p(s)  =
h(\nu_s^\delta) + \int \phi + s\psi-\delta p(s) \, d\nu_s^\delta \\
&=& \frac{1}{\int \tau \, d\mu_s^\delta} 
\left(h(\mu_s^\delta) + \int \overline{\phi+s\psi-\delta p(s)} \, d\mu_s^\delta \right)
= \frac{P(\overline{\phi+s\psi - \delta p(s)})}{\int \tau \, d\mu_s^\delta}.
\end{eqnarray*}
We assumed that $\int\bar\psi~d\mu_{\bar\phi}=K > 0$, so by smoothness and convexity of the 
pressure function,  $P(\overline{\phi+s\psi}) = Ks+ h.o.t.$.
By the analyticity of 
$(s, z) \mapsto P(\overline{\phi+s\psi-z})$ on $(0, \infty)\times(0, \infty)$ 
(cf.\ \cite[Theorem 6.5]{Sarig01a}), we can take $\delta > 0$ sufficiently small so that
$P(\overline{\phi+s\psi-\delta p(s)}) \geq Ks/2$ for all small $s \geq 0$.
Combining the above and making $p(s)$ subject of the inequality, we get
$$
p(s) \gg s^{1/(\beta-sC)} \geq s^{1/(\beta-\eps)} \qquad \text{ as } s \to 0.
$$
\end{proof}

\begin{rmk}
In order to apply this to the examples later in this section, we can verify the assumption
$p(s) := P(\phi+s\psi) > 0$ for $s > 0$ using a continuity argument.
First assure that $K := \int \bar\psi \, d\mu_s > 0$; in the case where $\psi$ satisfies (H2)(ii),
this boils down to taking $C'$ sufficiently large.
Then truncate the induced system to states with $\tau(Y_i) \leq N$, then the corresponding
pressure $P_N(\overline{\phi+s\psi}) \geq Ks/2$ for a sufficiently large $N$.
At the same time, for the RPF measure $\mu_{s,N}$, we have
$$
\int_Y \tau\, d\mu_{s,N} \leq \sum_{n\leq N} \sum_{\tau(Y_i)=n} e^{\overline{s\psi-\delta p(s)}(x_i)} 
\lesssim \sum_{n \leq N} \frac{n^\gamma}{n^{1+\beta}} < \infty,
$$
whence by Abramov's formula $p(s) \geq p_{s,N} \geq Ks/(2\int \tau \, d\mu_{s,N}) > 0$.
\end{rmk}

\subsection{AFN maps}
\label{subsec:afn}

The main class of maps where the previous results apply are AFN maps, i.e., non-uniformly
expanding interval maps with finitely many branches, finitely many neutral fixed points,
and satisfying Adler's distortion property ($f''/{f'}^2$ bounded).
The two-parameter family of such maps is the (non-Markov) Pomeau-Manneville 
maps $f:[0,1] \to [0,1]$ defined as
\begin{equation}\label{eq:nMPM}
 f(x)=f_{\alpha, b}(x) = \begin{cases}
    x(1 + 2^\alpha x^\alpha) & \text{ if } x \in [0,\frac12];\\
    b(2x-1) & \text{ if } x \in (0,1],
        \end{cases}
 \qquad \alpha > 1, \ b \in (0,1].
\end{equation}
The right branch of $f$ is not expanding if $b \in (0, \frac12]$, but 
the first return induced map $F$ to $Y = (\frac12,1]$ is uniformly expanding
for all $b \in (0,1]$, so we can allow all $b \in (0,1]$.
Still $F$ need not be Markov, but as shown in \cite[Section 9]{BT16}, 
one can construct a reinduced 
Gibbs-Markov map to $Y$ that satisfies the hypotheses in the previous section as we will see.

For the standard Pomeau-Manneville map (i.e., $b=1$) with potential 
$\phi_t = -t\log f'$, the shape of the pressure
was computed by Lopes \cite[Theorem 3]{L93}:
$$
P(\phi_t) \asymp \begin{cases}
h_\mu(1-t) + B(1-t)^{1/\alpha} + \text{ h.o.t.} & \text{ if } t < 1 \text{ and } \alpha \in (\frac12,1);\\
 C(1-t)^\alpha + \text{ h.o.t.} & \text{ if } t < 1 \text{ and } \alpha > 1;\\
0 & \text{ if } t \ge 1,
\end{cases}
$$
where $h_\mu$ is the entropy of the non-Dirac equilibrium measure
 (i.e., the acip) and $B, C > 0$ are constants.
 Note that the transition case $\alpha=1$ corresponds to the transition
 from a finite acip (for $\alpha < 1$) to an infinite acim (for $\alpha \ge 1$).
The next proposition makes the case $\alpha > 1$ more precise.  Note that Lopes' result in the $\alpha>1$ case follows from the proposition when one notices that $t\mapsto  P(\overline{\phi_t})$ is of the form $K(1-t) +O((1-t)^2)$ where $K=\int\bar\phi~d\mu_{\bar\phi}$.

\begin{prop}\label{prop:psilogtau}
Let $f=f_{\alpha, b}$ be as in \eqref{eq:nMPM}
and take $\phi = -\log f'$. Then $\bar \phi \in SV$.  
\begin{itemize}
 \item [(a)]
Suppose that $\psi(x) = -x^\eta$ for $x$ in a neighbourhood of $0$ with $\eta = (1-\gamma)\alpha$
and $\gamma\in (0, \alpha/(1+\alpha))$. Moreover, assume that $P(\phi+s\psi)>0$ for $s > 0$.  Then
$$
P(\phi+s\psi) = (CP(\overline{\phi+s\psi}))^{\alpha}(1+Q(s))
$$
as $s \to 0$ for $C$ and $Q$ as in Theorem~\ref{thm:PbarP} with $\alpha = 1/\beta$. 
\item[(b)]
If $\phi_t =- t \log f'$, then 
$$
P(\phi_t) = (C P(\overline{\phi_t}))^{\alpha}(1+Q(1-t))
$$ 
as $t\nearrow 1$ for $C$ and $Q$ as in Theorem~\ref{thm:PbarP} (i.e., for $s=1-t$). 
\item[(c)] For $b=1$, the conclusions of Theorem~\ref{thm-mix} and Proposition~\ref{prop-arcsine}
hold.
\end{itemize}
\end{prop}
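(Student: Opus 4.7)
The plan is to verify the abstract hypotheses of Theorem~\ref{thm:PbarP} (for (a) and (b)) and of Theorem~\ref{thm-mix} together with Proposition~\ref{prop-arcsine} (for (c)), for the Pomeau--Manneville map \eqref{eq:nMPM}. First I would take $Y=(1/2,1]$ and $\tau:Y\to\N$ the first return of $f$ to $Y$. For $b=1$, $F=f^\tau$ is already Gibbs--Markov with full branches; for $b<1$ it is not Markov, but by the construction in \cite[Section 9]{BT16} a reinduced Gibbs--Markov map with BIP and tails of the same order is available, and the reinducing time is integrable, so \eqref{eq-reind} holds. The geometric potential $\bar\phi=-\log|F'|$ lies in $SV$ by the Adler distortion property. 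Writing $x_k=f^{-k}(1/2)\cap(0,1/2)$, a standard asymptotic analysis of orbits near the neutral fixed point gives $x_k=(c\alpha k)^{-1/\alpha}(1+O(k^{-1/\alpha}))$; bounded distortion on $\{\tau=n\}$ then yields an expansion of $\mu_{\bar\phi}(\tau>n)$ of the form required by (H1)(b) with $\beta=1/\alpha$. Liftability at $s=0$ follows from the existence of the infinite acim together with Lemma~\ref{lem:lift_shadow}, and extends to small $s>0$ by continuity.

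For part (a), the key computation is the tail asymptotic of $\bar\psi$. Using the orbit asymptotic above, for $\tau(y)=n$ large,
\begin{equation*}
\bar\psi(y)=-\sum_{k=0}^{n-1}x_k^{\eta}+O(1)=-(c\alpha)^{-\eta/\alpha}\sum_{k=1}^{n}k^{-\eta/\alpha}+O(1)=-C_\gamma\,n^{\gamma}+C'+o(1),
\end{equation*}
with $\gamma=1-\eta/\alpha\in(0,\alpha/(1+\alpha))$ and $C_\gamma,C'>0$. Piecewise H\"older control on each $\{\tau=n\}$ comes from bounded distortion, giving (H2)(ii). The lemma from the previous subsection applied to this $\gamma$ gives $s=O(P(\phi+s\psi)^{\beta-\eps})$, so Theorem~\ref{thm:PbarP} applies with $1/\beta=\alpha$. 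For part (b), I would write $\phi_t=\phi+(1-t)\psi$ with $\psi=-\log f'$, so $\bar\psi=-\log|F'|$; bounded distortion gives $|\bar\psi|\asymp\log\tau$, so $\bar\psi\in L^1(\mu_{\bar\phi})$ and Remark~\ref{rmk-log} verifies (H2)(i). Liftability for $\phi_t$ at $t<1$ close to $1$ is known from the acim lift, and the same lemma again furnishes $(1-t)\ll P(\phi_t)^a$, completing the application of Theorem~\ref{thm:PbarP}.

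For part (c), with $b=1$ the first return $F$ is Gibbs--Markov with full branches and satisfies the Lasota--Yorke and compactness inputs for (P1') and (P2')(i) as in \cite{Sarig02}. Non-arithmeticity of $\tau$, hence (P2')(ii), holds because the sequence of non-empty return levels contains successive integers. Part (a) supplies (H3)(iii); (H3)(i) is the first-return property by construction; and (H3)(ii) reduces to a uniform Gibbs-type identification on $\{\tau=n\}$, immediate from the full-branched structure together with $P(\overline{\phi+s\psi})\to 0$. Theorem~\ref{thm-mix} and Proposition~\ref{prop-arcsine} then apply directly.

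I expect the hardest step to be the refined tail expansion (H1)(b): beyond the leading $cn^{-1/\alpha}$, one needs $nb(n)$ of bounded variation and a remainder $O(n^{-\xi})$ with $\xi>2$. This requires a higher-order asymptotic expansion of the inverse orbits near $0$ passed to $\mu_{\bar\phi}$-mass via distortion estimates; the bookkeeping is standard for $b=1$ but in the non-Markov case $b<1$ must also be shown to survive the reinduction of \cite{BT16}.
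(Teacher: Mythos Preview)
Your approach is essentially the same as the paper's: verify (H1)(b), $SV$, liftability, and (H2) for the induced system on $Y=(1/2,1]$, then invoke Theorem~\ref{thm:PbarP}. The paper cites \cite[Proposition~A3]{T15} and \cite[Section~9]{BT16} for (H1)(b) rather than redoing the orbit asymptotics, and for liftability it applies Lemma~\ref{lem:lift_shadow} directly to $\phi+s\psi$ with $s>0$ (where $P>0$), rather than at $s=0$ (where the hypothesis $P(\chi)>0$ of that lemma fails) and then extending by continuity; the key observation is that $\delta_0$ is the only non-liftable measure and it can be approximated by periodic-orbit measures.

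There is one genuine slip in part (b): with $\phi=-\log f'$ and your choice $\psi=-\log f'$, one gets $\phi+(1-t)\psi=-(2-t)\log f'$, not $\phi_t=-t\log f'$. The correct choice is $\psi=+\log f'$, so $\bar\psi=+\log|F'|$; then $\phi+(1-t)\psi=-t\log f'$ as required. Since $f'$ is bounded on $[0,1]$, this $\psi$ is bounded above (as Theorem~\ref{thm:PbarP} requires), and $|\bar\psi|\asymp\log\tau$ still gives $\bar\psi\in L^1(\mu_{\bar\phi})$ and (H2)(i) via Remark~\ref{rmk-log}. With this correction your argument goes through.

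For (c), your justification of (H3)(ii) is a bit terse; the paper makes it explicit by writing $\mu_{\overline{\phi+s\psi}}(\tau=n)\sim h_s(\tfrac12)\,e^{s\,S_n\psi}\,m_{\bar\phi}(\tau=n)\sim \frac{h_s(1/2)}{h_0(1/2)}\,e^{s\psi_n}\,\mu_{\bar\phi}(\tau=n)$ and then using the continuity of $s\mapsto h_s$ coming from Lemma~\ref{lemma-rus} to get uniformity in $s$.
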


\begin{rmk}
 The assumption $b=1$ in part (c) is used to ensure that a Gibbs-Markov {\em first} return map $F$ is available.
 Without it, extra arguments are needed which we omit for simplicity, see Remark~\ref{rmk:7.2}.
\end{rmk}

\begin{proof}
The maps $f$ with $b=1$ fit in the class of studied in~\cite{LiveraniSaussolVaienti99} 
with $\phi = -\log f'$ and satisfies (H1)(b) (see~\cite[Proposition A3]{T15}).
For general $b \in (0,1]$, \cite[Section 9]{BT16} gives 
$\mu_{\bar\phi}(\tau > n) \sim c n^{-\beta}$ for $\beta = 1/\alpha$
and condition (H1)(b) holds.
Here $c = c^* h(\frac12)$, where $h$ is the $F$-invariant density and $c^*$ is given in 
\cite[Section B]{T15}.
The induced map is Gibbs-Markov, and together with (H1)(b) this assures that (P1) and (P2) hold.

The induced potential $\bar\phi = -\log|F'|$ and for each $n$-cylinder $C_n$, we have
\begin{equation}\label{eq:var}
\sup_{x,y \in C_n} |\bar\phi(x)-\bar\phi(y)| \leq \int_{C_n} |(-\log |F'(\xi)|)'| \ d\xi
= \int_{C_n} \frac{|F''(\xi)|}{|F'(\xi)|^2} |F'(\xi)| \ d\xi
\leq A |F(C_n)|,
\end{equation}
where $A := \sup |F''(\xi)|/|F'(\xi)|^2 < \infty$ by Adler's condition, cf.\ \cite[Lemma 10]{Z98}.
Since $F$ is uniformly expanding, $\sup\{|C_n| : n\text{-cylinders}\}$ decreases exponentially in $n$,
so $\bar\phi\in SV$.

To prove (a), the potential $\psi(x) = -x^\eta$ for $\eta = (1-\gamma)\alpha$
(with $\gamma \in (0,1]$) and $x$ close to $0$
produces an induced potential $\bar\psi(x) \sim C-\tau(x)^\gamma$ as $x \to \frac12$.

To check (H2), first note that the H\"older regularity guarantees that $\bar\psi\in SV$ (since $F$ is uniformly expanding, 
it suffices to bound the first variation: for $x, y$  in the same $1$-cylinder $Y_i$ we have
$|\bar\psi(x)-\bar\psi(y)|\lesssim \sum_{k=0}^{\tau(x)-1}|f^k(Y_i)|^\gamma
\lesssim \sum_{n=1}^\infty n^{-\gamma(1+\frac1\alpha)}<\infty$).  To conclude that (H2) holds, 
 we need to check liftability. 
 First notice that there is only one measure in $\M$, namely $\delta_0$, which does not lift to $(Y, F)$, 
 so for a given potential $\uu:[0,1]\to [-\infty, \infty)$, 
Lemma~\ref{lem:lift_shadow} follows if $\delta_0$ is ``not isolated'': there is a sequence $(\mu_n)_n$ of ergodic measures in $\M\setminus\{ \mu\}$
such that $h_{\mu_n}+\int\uu~d\mu_n\to h_{\mu}+\int\uu~d\mu$.  
For this, one can take a sequence of Dirac masses on 
periodic cycles with this property, so liftability follows by Lemma~\ref{lem:lift_shadow}.

With these hypotheses checked, Theorem~\ref{thm:PbarP} gives the result.

For statement (b) on the potential $\phi_t$, we take $\phi = -\log|f'|$, $\psi = \log|f'|$ and $s=1-t$,
so $\bar\psi = s\log|F'| \in L^1(\mu_{\bar\phi})$ and (H2)(i) holds by Remark~\ref{rmk-log}. 
Properties (P1) and (P2) and (H2) are shown as above.
It is standard that $P(-t\log|f'|)>0$ if $t<1$.
Hence Theorem~\ref{thm:PbarP} applies with $C = (c \beta \Gamma(1-\beta))^{-1}$.

For (c), the verification of (H1)(b'), (P1') and (P2') are similar to the verification of (H1)(b),
(P1) and (P2). 
To check (H3)(ii),
take $s \geq 0$ and let $h_s := \frac{d\mu_{\overline{\phi+s\psi}}}{dm_{\overline{\phi+s\psi}}}$
be the density of the $F$-invariant equilibrium state w.r.t.\ 
the $\overline{\phi+s\psi}$-conformal
measure on $Y = [\frac12,1]$; they are H\"older because $\psi$ is.
Since $F$ has full branches, we compute as $n \to \infty$:
\begin{eqnarray*}
\mu_{\overline{\phi+s\psi}}(\tau=n) &\sim& h_s\left(\frac12\right) m_{\overline{\phi+s\psi}}(\tau=n)
\sim h_s\left(\frac12\right) e^{S_n(\phi+s\psi)} \\
&=& h_s\left(\frac12\right) e^{s \, S_n\psi} m_{\overline{\phi}}(\tau=n) \sim
\frac{h_s(\frac12)}{h_0\left(\frac12\right)}  e^{s \, S_n\psi} \mu_{\overline{\phi}}(\tau=n).
\end{eqnarray*}
The continuity given in Lemma~\ref{lemma-rus} carries over to the density, so
that $\lim_{s\to 0} \frac{h_s(\frac12)}{h_0(\frac12)} = 1$.
This verifies (H3)(ii).
Therefore Theorem~\ref{thm-mix} and Proposition~\ref{prop-arcsine}
apply.
\end{proof}

\subsection{Unimodal maps with flat critical points}\label{sec:flat}

Another source of infinite measure systems on the interval are $C^2$ Misiurewicz maps
with a flat critical point. The study of these goes back to \cite{BM89},
with further contributions by e.g.\ \cite{Th05, Z04, T17}.
The map $f:[-1,1] \to [-1,1]$ has a {\em flat} critical point $0$ 
which means all its derivatives vanish.
For simplicity we let the Misiurewicz condition (non-recurrence of the critical point)
mean here that the critical orbit is $0 \mapsto 1 \mapsto -1 \circlearrowleft$.
In addition, we assume that $f$ is smooth and symmetric (i.e., $f(x) = f(-x)$).
In this case, we can look at the first return map
$F=f^\tau:[-p,p] \to [-p,p]$ where $p > 0$ is the orientation reversing fixed point of $f$.
This is a Gibbs-Markov map, with invariant measure $\mu \ll \Leb$
and density $h = \frac{d\mu}{d\Leb}$ is smooth, bounded and bounded away from zero. 
Since all points except preimages of $0$ enter $[-p,p]$ under iteration, the only potentials 
which are not liftable are those with equilibrium measure $\delta_0$, so as in the Pomeau-Manneville case, we only need 
to check if $\delta_0$ is isolated for our potential of interest.  
It is easy to see that for $\phi=\log|f'|=-\psi$ this holds.

In \cite{T17}, it is shown that for the potential $\phi_t = -t \log|f'|$, there is a phase transition
at some $t^+>0$ and $P(\phi_t) = 0$ for $t \geq t^+$, but
a proof that $t^+ = 1$ (as one would conjecture) is not given, and the order of the phase 
transition, and the shape of the pressure for $t < 1$ are not computed. 

\begin{lemma}
If $t<1$ then $P(-t\log|f'|)>0$, i.e., $t^+=1$.
\end{lemma}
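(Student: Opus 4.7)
The plan is to produce, for each $t<1$, a finite $f$-invariant probability measure $\nu$ with $h_\nu(f)+\int\phi_t\,d\nu>0$, where $\phi_t=-t\log|f'|$, so that $P(\phi_t)>0$ by the variational principle \eqref{eq:Pvar}.

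First, I would transfer the positivity question to the induced system $F$. As observed in the paragraph preceding the statement of the lemma, for the potential $\phi=-\log|f'|$ the only measure in $\mathcal{M}$ failing to lift to $(Y,F)$ is $\delta_0$, and $\delta_0$ is not isolated among (say) Dirac masses on nondegenerate periodic orbits of $f$. Lemma~\ref{lem:lift_shadow} therefore yields $P(\overline{\phi})=0$. Since $(Y,F)$ is a BIP Gibbs--Markov shift and $\overline{\phi}\in SV$ (which follows from Adler's distortion condition just as in \eqref{eq:var}), Sarig's finite-approximation theorem \cite[Theorem~2]{Sarig99} gives $P_N(\overline{\phi})\nearrow 0$ as $N\to\infty$, where $P_N$ denotes the pressure of the subshift $\Sigma_N$ on the finite alphabet $\{Y_i:\tau(Y_i)\le N\}$. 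For $N$ large, $\Sigma_N$ is a topologically mixing SFT, using that once $N$ exceeds the largest inducing time of a ``big'' state the BIP states remain in $\Sigma_N$ and continue to provide irreducibility and aperiodicity.

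Second, I would bound $P_N(\overline{\phi_t})$ from below. Let $\bar\mu^{(N)}$ be the unique equilibrium measure of $\overline{\phi}$ on $\Sigma_N$, so that $h_{\bar\mu^{(N)}}(F)-\int\log|F'|\,d\bar\mu^{(N)}=P_N(\overline{\phi})$. Testing $\overline{\phi_t}$ against $\bar\mu^{(N)}$ in the variational principle for $\Sigma_N$,
\begin{align*}
P_N(\overline{\phi_t})
&\ge h_{\bar\mu^{(N)}}(F)-t\!\int\log|F'|\,d\bar\mu^{(N)}\\
&=P_N(\overline{\phi})+(1-t)\!\int\log|F'|\,d\bar\mu^{(N)}
\;\ge\;P_N(\overline{\phi})+(1-t)\log\lambda,
\end{align*}
where $\lambda>1$ is the uniform expansion constant of the Gibbs--Markov map $F$. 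Combined with $P_N(\overline{\phi})\to 0$, for $N=N(t)$ large enough this gives $P_N(\overline{\phi_t})\ge\tfrac{1}{2}(1-t)\log\lambda>0$.

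Finally, let $\bar\nu$ be the equilibrium measure for $\overline{\phi_t}$ on $\Sigma_N$; it is supported on the finite alphabet, so $\int\tau\,d\bar\nu\le N<\infty$, and $h_{\bar\nu}(F)+\int\overline{\phi_t}\,d\bar\nu=P_N(\overline{\phi_t})>0$. Projecting via \eqref{eq:pullback} and normalising yields an $f$-invariant probability measure $\nu$ lifting to $\bar\nu$, and Abramov's formula gives
$$h_\nu(f)+\int\phi_t\,d\nu=\frac{P_N(\overline{\phi_t})}{\int\tau\,d\bar\nu}>0,$$
whence $P(\phi_t)>0$. The main obstacle is the first step: verifying that Sarig's finite approximation really gives $P_N(\overline{\phi})\to 0$ in the flat-critical setting, which reduces to checking $\overline{\phi}\in SV$ and topological mixing of $\Sigma_N$ for $N$ large; both are standard but need to be spelled out for this class of maps.
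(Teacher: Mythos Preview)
Your argument is essentially correct and takes a genuinely different route from the paper. One citation needs fixing: Lemma~\ref{lem:lift_shadow} has the hypothesis $P(\uu)\in(0,\infty)$, so it cannot be applied to $\uu=\phi=-\log|f'|$, where $P(\phi)=0$. The conclusion $P(\overline{\phi})=0$ is still true, but you should justify it directly: for instance, Lebesgue measure on $Y=[-p,p]$ is $(-\log|F'|)$-conformal and conservative (every point outside the countable set of preimages of $-1$ returns to $Y$), so the Gurevich pressure of $-\log|F'|$ vanishes. This is in fact how the paper obtains $P(\overline{\phi})=0$. Once that is patched, your use of Sarig's finite approximation, the uniform expansion bound $\log|F'|\ge\log\lambda>0$ (which does hold for the first return map of a Misiurewicz map with hyperbolic postcritical set), and the Abramov projection are all sound.

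The paper argues differently. After establishing $P(-\log|F'|)=0$, it invokes the tail estimates of \cite[Theorem~5]{Z04} to show that $P(-t\log|F'|)<\infty$ on a neighbourhood $(1-\eps,\infty)$ and that the RPF measure $\bar\mu$ has finite positive Lyapunov exponent; analyticity of $t\mapsto P(-t\log|F'|)$ then gives a negative derivative at $t=1$, hence $P(-t\log|F'|)>0$ for $t<1$, and finally Lemma~\ref{lem:induced p nonpos} forces $P(-t\log|f'|)>0$. Your approach is more elementary in that it avoids both the tail estimates and the analyticity machinery, replacing them by a finite-state truncation where all quantities are automatically finite; the trade-off is that you need Sarig's approximation theorem and an explicit measure projection. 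Both routes are clean; yours is arguably more self-contained for this particular lemma, while the paper's fits naturally with the tail estimates already being used elsewhere in the section.
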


\begin{proof}
We first note that the potential $-\log|f'|$ is recurrent. One way is to see this is that every point in 
$[-1,1]\setminus \cup_{n\ge 0}f^{-n}(-1)$ returns to $[-p, p]$ infinitely often.  
This is a full Lebesgue measure set of points, so Lebesgue is conservative. 
Whence $P(-\log|F'|)=0$.  The RPF measure $\bar\mu$ can be shown to have finite, strictly positive, 
Lyapunov exponent $\int\log|F'|~d\bar\mu$.  This can be seen from the Gibbs property of $\bar\mu$ and 
the tail estimates of \cite[Theorem 5]{Z04}.  Also by those tail estimates, there is $\eps>0$ such 
that $P(-t\log|F'|)<\infty$ for $t\in (1-\eps, \infty)$.  Combining these two facts, we see 
that $t\mapsto P(-t\log|F'|)$ is analytic on $(1-\eps, \infty)$ with derivative 
$-\int\log|F'|~d\bar\mu<0$ at $t=1$, so strictly decreasing.  
Since by Lemma~\ref{lem:induced p nonpos}, $P(-t\log|F'|-\tau P(-t\log|f'|)\le 0$, 
this gives $P(-t\log|f'|)>0$ for $t<1$.
\end{proof}

Combining \cite[Formula (6)]{Z04} with the computation of \eqref{eq:var}, we obtain that $\bar\phi \in SV$.
The tail estimates in \cite[Theorem 5]{Z04} show regular variation, but
since they are general and don't include the higher order terms\footnote{In \cite{T17}, 
there are tail estimates (Proposition 2.6 part 3), but they are a bit imprecise
and due to the form of map, there is an extra logarithmic term which is not present in
the setting presented here.}, we present a slightly different family, and sketch
the argument for the tails. Set 
\begin{equation}\label{eq:flat}
f(x) = \begin{cases}
  1-2 e^{-b(|x|^{-\alpha} - 1)} & \text{ if } x \neq 0,\\
  1 & \text{ if } x = 0,
       \end{cases}
\qquad\text{ for } b, \alpha > 0,
\end{equation}
and let
$$
V_{\pm}(y) = \pm \left( 1 - \frac1b \log \frac{1-y}{2} \right)^{-\beta} \qquad \text{ for } \beta = 1/\alpha,
$$
be the two inverse branches of $f$.
Then the region $\{ \tau > n \} = (-a_n, a_n)$ where
$\pm a_n = V_{\pm} \circ V_+ \circ V_-^{n-2}(p)$.
Let $p_n = f(\pm a_n) = V_+ \circ V_-^{n-2}(p) = 1-\zeta_n r^n$, where 
$r = |f'(1)|^{-1} = |f'(-1)|^{-1}$.
Then $\zeta_{n+1} = 1-V_+(\zeta_n r^n) = \zeta_n r^{n+1} (1+ O(r^{n-1} \zeta_n^2))$,
so $(\zeta_n)_{n \geq 1}$ converge exponentially fast to some positive limit $\zeta$.
This gives that for some $c > 0$,
\begin{eqnarray*}
 \pm a_n &=& V_\pm(\zeta r^{n-1}(1 + O(e^{-cn}))) \\ 
 &=& \pm n^{-\beta} \left( \frac{b}{\log |f'(1)|}\right)^{\beta}
 \left(1 - \frac{\beta}{n} \frac{\log \zeta/2}{\log |f'(1)|} + O(n^{-2}) \right).
\end{eqnarray*}
Now to estimate $\mu(\tau > n)$ we integrate over the density $h(x) = h_0 + h_1 x + h_2 x^2 + O(x^3)$.
This shows that
$$
\mu(\tau > n) = \int_{-a_n}^{a_n} h_0 + h_1 x + h_2 x^2 + O(x^3) \, dx
= 2h_0 a_n + \frac{2}{3} h_2 a_n^3 + O(a_n^4)
$$ 
satisfies condition (H1)(b) in Section~\ref{sec:ev} when $\beta \in (\frac23,1)$. 
(For smaller values of $\beta$, we just introduce more terms in the expansion of the density $h$.)
Therefore Theorem~\ref{thm:PbarP} applies to the family \eqref{eq:flat}
and potential $\phi = -\log|f'|$, so we obtain:

\begin{prop}\label{prop:psilogtau2}
Assume $\psi$ is as in Proposition~\ref{prop:psilogtau} (here a neighbourhood around $-1$ takes the 
role of the neighbourhood around 0) with $\alpha > 1$.  Then
$$
P(\phi+s\psi) = (CP(\overline{\phi+s\psi}))^{\alpha}(1+Q(s))
$$
as $s \to 0$ for $C$ and $Q$ as in Theorem~\ref{thm:PbarP}. 

In particular if $\phi_t = -t \log|f'|$, then there is $C$ such that
$P(\phi_t) = (C P(\overline{\phi_t}))^{\alpha}(1+Q(1-t))$ as $t\nearrow 1$ for $C$ and $Q$ as in 
Theorem~\ref{thm:PbarP} (i.e., for $s=1-t$). 
\end{prop}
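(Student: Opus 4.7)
The plan is to verify the hypotheses of Theorem~\ref{thm:PbarP} in this setting and then invoke it with $\beta = 1/\alpha$. Most of the verification has already been carried out in the discussion preceding the statement of the proposition: the estimate \eqref{eq:var} together with Adler's condition gives $\bar\phi \in SV$; the explicit computation of $a_n$ via the inverse branches $V_\pm$ followed by integration against the smooth density $h$ yields the refined tail expansion (H1)(b) with $\beta = 1/\alpha$; and properties (P1), (P2) follow from standard Gibbs--Markov theory for the first return map $F$ on $[-p,p]$, which has the BIP property (in fact full branches).

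For the liftability part of (H2) I would argue, as in the proof of Proposition~\ref{prop:psilogtau}, that the only $f$-invariant ergodic measure that fails to lift to $(Y,F)$ is the Dirac mass $\delta_0$ at the flat critical point, since every point other than a preimage of $0$ eventually enters $[-p,p]$. Consequently Lemma~\ref{lem:lift_shadow} applies once one exhibits a sequence of periodic Dirac measures whose free-energy values approximate $P(\phi+s\psi)$; these are readily obtained from periodic points away from $0$, giving $(Y,F)$-liftability of $\phi+s\psi$ for the relevant range of $s$.

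For the part of (H2) concerning the tail behaviour of $\bar\psi$: in case (a), the potential $\psi(x) = -|1+x|^\eta$ near $-1$ with $\eta = (1-\gamma)\alpha$ produces $\bar\psi(y) = C' - \psi_0(y)$ with $0 \le \psi_0 \lesssim \tau^\gamma$ as in (H2)(ii), because the orbit of $y\in\{\tau=n\}$ spends most of its inducing time near $-1$ where the forward iterates track the sequence $p_n = 1 - \zeta r^n$ via the identification under $V_-$. Summable-variation for $\bar\psi$ follows from the uniform expansion of $F$ as in the Pomeau--Manneville case. In case (b), $\psi = \log|f'|$ produces $\bar\psi = \log|F'|$, which lies in $L^1(\mu_{\bar\phi})$ thanks to the finite Lyapunov exponent established in the preceding lemma, and Remark~\ref{rmk-log} yields (H2)(i). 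The positivity $P(\phi+s\psi)>0$ for $s>0$ in (b) is exactly the content of that lemma, while in (a) it follows by the continuity/truncation argument in the remark after the $s\ll (P(\phi+s\psi))^a$ lemma, using that $\int\bar\psi\,d\mu_{\bar\phi}>0$ for $C'$ sufficiently large. That same lemma finally provides the required bound $s \ll (P(\phi+s\psi))^a$ for some $a > 0$, allowing Theorem~\ref{thm:PbarP} to be applied.

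The main obstacle, as I see it, is not in any individual verification but in checking that the induced potential $\bar\psi$ genuinely has the $\tau^\gamma$-growth demanded by (H2)(ii) rather than only being controlled up to logarithmic factors (the flat critical point makes the geometry near $-1$ sensitive: one needs the exponential convergence $p_n \to 1$ to translate the pointwise $\eta$-H\"older behaviour of $\psi$ at $-1$ into the desired algebraic-in-$\tau$ bound on $\bar\psi$). Once this is pinned down — using that $|1+f^k(y)| \asymp r^{\tau(y)-k}$ for a positive fraction of $k < \tau(y)$ — the identification $\beta = 1/\alpha$ in (H1)(b) and the hypotheses of Theorem~\ref{thm:PbarP} are all in place and the stated asymptotic follows directly from that theorem, with the particular case $\phi_t = -t\log|f'|$ arising via the substitution $s = 1-t$ and $\psi = \log|f'|$ under (H2)(i).
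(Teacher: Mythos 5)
Your verification of the tail estimate (H1)(b), the liftability argument via Lemma~\ref{lem:lift_shadow} with the sequence of periodic Dirac masses approaching $\delta_0$, and case (b) with $\psi=\log|f'|$ all match the paper's route. However, there is a genuine gap in your treatment of case (a), and you yourself flag it as the main obstacle without resolving it.

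You claim that the potential $\psi(x)=-|1+x|^\eta$ near $-1$ with $\eta=(1-\gamma)\alpha$ yields $\bar\psi=C'-\psi_0$ with $\psi_0\lesssim\tau^\gamma$. This is false in the flat-critical geometry, and the very estimate you cite shows why: for $y\in\{\tau=n\}$ one has $|1+f^k(y)|\asymp r^{\tau(y)-k}$ with $r=|f'(-1)|^{-1}\in(0,1)$, so the orbit approaches $-1$ \emph{exponentially} fast in backward time. Then $\psi(f^k(y))\asymp -r^{\eta(\tau-k)}$, and $\sum_{k}|\psi(f^k(y))|\asymp\sum_{j\geq 1}r^{\eta j}$ is a convergent geometric series, hence $\bar\psi$ is \emph{bounded} uniformly in $\tau$ rather than of order $\tau^\gamma$. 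In the Pomeau--Manneville case the orbit near the neutral fixed point decays only polynomially (like $k^{-1/\alpha}$), which is what makes the choice $\eta=(1-\gamma)\alpha$ produce $\psi_0\sim\tau^\gamma$ there; this balance has no analogue for an exponentially attracting fixed point. The paper addresses this in the remark immediately following the proposition by replacing the polynomial $\psi$ with a slowly decaying \emph{logarithmic} potential, $\psi(x)=\eta(-\log(x+1))^{\gamma-1}$ near $-1$ (note $\gamma-1<0$). For that choice, $-\log(1+f^k(y))\asymp(\tau-k)\log(1/r)$, so $\psi(f^k(y))\asymp(\tau-k)^{\gamma-1}$ and the Birkhoff sum scales like $\tau^\gamma/\gamma$, which is exactly what (H2)(ii) requires. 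Your suggested fix (``a positive fraction of $k<\tau(y)$'') does not rescue the polynomial $\psi$, since the geometric sum is controlled by its largest term regardless of how many indices contribute.

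A secondary inaccuracy: you attribute $\bar\phi\in SV$ to Adler's condition via \eqref{eq:var}. Adler's condition $f''/(f')^2$ bounded fails at a flat critical point, where $f'$ vanishes to infinite order; the paper instead invokes Zweim\"uller's estimate (\cite[Formula (6)]{Z04}) combined with \eqref{eq:var} to establish summable variations in this setting. This does not affect the overall structure of the argument, but the justification you gave would not hold up as written.
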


\begin{rmk}
If $\psi(x) = \eta(-\log(x+1))^{\gamma-1}$ for some $\eta > 0$, $\gamma \in (0,1]$ and all $x$ close to $-1$,
then  $\bar\psi(y) \sim C - \frac{\eta}{\gamma} (-\log r)^{1-\gamma} \tau(y)^\gamma$ for $y \in [p,p]$
(where we recall that $r = |f'(-1)|^{-1} \in (0,1)$), which falls under (H2)(ii).
The proof that (H3) holds is similar to the argument for AFN maps in Section~\ref{subsec:afn}.
\end{rmk}

\subsection{Fibonacci unimodal maps}

In \cite{BT15}, a family of countably piecewise linear unimodal maps with Fibonacci combinatorics
is presented, and their thermodynamic properties are studied.
The original motivation to study Fibonacci unimodal maps is their unusual Lebesgue ergodic
behaviour and (for sufficiently large critical order) the existence of a
Cantor set that is not Lyapunov stable (as is the case with infinitely renormalisable unimodal maps)
but attracts Lebesgue-a.e.\ point \cite{BKNS96}.
The countably piecewise linear model is used to deal with the otherwise severe
problems of distortion control. It is comparable to the Gaspard-Wang \cite{GW88} countably piecewise
linear map as model for the smooth Pomeau-Manneville map.

Let $\s_0, \s_1, \s_2, \s_3, \s_4, \ldots = 1,2,3,5,8, \dots$ be the Fibonacci numbers.
A Fibonacci unimodal map $f:[0,1] \to [0,1]$ with critical point $c = \frac12$, 
has two sequences of precritical points $z_k \nearrow c$ and $\hat z_k \searrow c$ such that
$f^{\s_k}$ maps both $[z_k, c]$ and $[c, \hat z_k]$ monotonically onto
$[c, f^{\s_k}c]$ whereas $f^{\s_k}(z_{k+1}) = f^{\s_k}(\hat z_{k+1}) = z_{k-1}$ or $\hat z_{k-1}$.
Therefore we can construct a Markov induced map 
\begin{equation}\label{eq:F}
F = f^{\r} \quad \text{ where } \r(x) = \s_k \text{ if } 
x \in \hat V_{k+1} := [z_k, z_{k+1}) \cup (\hat z_{k+1}, \hat z_k],
\end{equation}
see Figure~\ref{fig1}.
The countably piecewise linear Fibonacci map is constructed to be linear on each interval
$[z_k, z_{k+1})$ and $(\hat z_{k+1}, \hat z_k]$ and so that the induced map $F$ also
has linear branches, see \cite[Section 2]{BT15}.
In fact, for any $\lambda \in (0,1)$ one can construct a countably piecewise linear Fibonacci map $f_\lambda$ 
such that $|z_k-c| = |\hat z_k-c| = \frac12 \lambda^k$ and the induced map $F_\lambda = f_\lambda^{\r}$ 
has slopes $F'_\lambda = \lambda^{-1}(1-\lambda)^{-1}$ wherever defined on $[z_1, \hat z_1]$.
Furthermore, $F_\lambda$ is two-to-one semiconjugate to the map
\begin{equation}
T_\lambda(x):= \begin{cases}\ 
\frac{1-x}{1-\lambda} & \text{ if } x\in V_1 := (\lambda,1],\\[2mm]
\ \frac{\lambda^{n-1}-x}{\lambda(1-\lambda)} & \text{ if } x\in V_n := (\lambda^n, \lambda^{n-1}], \quad n \ge 2,
\end{cases}
\label{eq:tildeStrVogt}
\end{equation}
and isomorphic to the map studied by Stratmann \& Vogt \cite{SV97} and later in \cite{BT12} 
(see Figure~\ref{fig1}).  Note that $\log F_\lambda$ has zero variations.

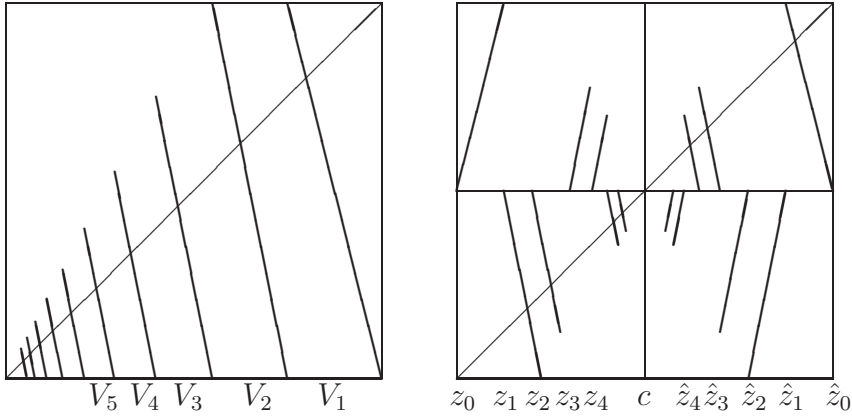
\begin{figure}[ht]
\unitlength=5mm
\begin{picture}(22,12)(-3.5,-0.5) \let\ts\textstyle
\thinlines
\put(0,0){\line(1,0){10}}\put(0,10){\line(1,0){10}}
\put(0,0){\line(0,1){10}} \put(10,0){\line(0,1){10}}
\put(0,0){\line(1,1){10}}
\thicklines
\put(10,0){\line(-1,4){2.5}} \put(8.3,-0.7){$\tiny V_1$}
\put(7.5,0){\line(-1,5){2}} \put(6.3,-0.7){$\tiny V_2$}
\put(5.5,0){\line(-1,5){1.5}}  \put(4.45,-0.7){$\tiny V_3$}
\put(4.0,0){\line(-1,5){1.1}}  \put(3.3,-0.7){$\tiny V_4$}
\put(2.9,0){\line(-1,5){0.8}}  \put(2.2,-0.7){$\tiny V_5$}
\put(2.1,0){\line(-1,5){0.58}}
\put(1.52,0){\line(-1,5){0.425}}
\put(1.095,0){\line(-1,5){0.304}}
\put(0.791,0){\line(-1,5){0.219}}
\put(0.572,0){\line(-1,5){0.1582}}
\put(0.4138,0){\line(-1,5){0.1164}}
\put(0.2974,0){\line(-1,5){0.08276}}
\thinlines
\put(12,0){\line(1,0){10}}\put(12,10){\line(1,0){10}}
\put(12,0){\line(0,1){10}} \put(22,0){\line(0,1){10}}
\put(17,0){\line(0,1){10}} \put(12,5){\line(1,0){10}}
\put(12,0){\line(1,1){10}}
\thicklines
\put(20.75,10){\line(1,-4){1.25}} \put(21.8,-0.7){$\tiny \hat z_0$}
\put(12,5){\line(1,4){1.25}} \put(11.8,-0.7){$\tiny z_0$}
\put(19.75,0){\line(1,5){1}} \put(20.55,-0.7){$\tiny \hat z_1$}
\put(13.25,5){\line(1,-5){1}} \put(12.95,-0.7){$\tiny z_1$}
\put(19,1.25){\line(1,5){0.75}} \put(19.55,-0.7){$\tiny \hat z_2$}
\put(14,5){\line(1,-5){0.75}} \put(13.8,-0.7){$\tiny z_2$}
\put(19,5){\line(-1,5){0.55}} \put(18.55,-0.7){$\tiny \hat z_3$}
\put(15,05){\line(1,5){0.55}} \put(14.6,-0.7){$\tiny z_3$}
\put(18.45,5){\line(-1,5){0.4}} \put(17.8,-0.7){$\tiny \hat z_4$}
\put(15.6,5){\line(1,5){0.4}} \put(15.35,-0.7){$\tiny z_4$}
\put(18.05,5){\line(-1,-5){0.29}}
\put(16,5){\line(1,-5){0.29}}
\put(17.76,5){\line(-1,-5){0.2125}}
\put(16.29,5){\line(1,-5){0.2125}}  \put(16.8,-0.7){$\tiny c$}
\end{picture}
\caption{The maps $T_\lambda:[0,1] \to [0,1]$ and $F_\lambda:[z_0, \hat z_0]
  \to [z_0, \hat z_0]$. }
\label{fig1}
\end{figure}
Lebesgue measure is dissipative if $\lambda > \frac12$ (see \cite[Theorem A]{BT15}).
The tail estimated in \eqref{eq:Fibotail} produces a conservative $f_\lambda$-invariant
measure that is finite if $\lambda \in (0, \frac{2}{3+\sqrt{5}})$ and infinite sigma-finite
if $\lambda \in (\frac{2}{3+\sqrt{5}}, \frac12)$.  

We summarise some of the main results of \cite{BT15} on
the shape of the pressure of potential $\phi_t = -t\log f_\lambda'$
and induced potential $\bar\phi_t = -t \log F_\lambda$:

\begin{thm}\cite[Theorem A]{BT12} and \cite[Theorem D]{BT15}\label{thm:bt2}
For $\lambda^t \leq \frac12$, 
$$
P(\bar\phi_t) = \log\left( \frac{(1-\lambda)^t}{1-\lambda^t} \right).
$$
For $\lambda \in (\frac{2}{3+\sqrt{5}}, \frac12)$ and golden mean $\GR = \frac{1+\sqrt{5}}{2}$, 
 there are constants $C_0, C_1$ such that
 \begin{equation}\label{eq:fibopressure}
 C_0 (1-t)^{\frac{\log \GR}{\log R}} 
 \leq P(\phi_t) 
 \leq  C_1 (1-t)^{\frac{\lambda \log \GR}{2t(1-2\lambda)}}
 \end{equation}
for $t \leq 1$, and 
$R = \frac{\left( 1 - \sqrt{1-4\lambda^t(1-\lambda)^t}\right)^2}{4\lambda^t (1-\lambda)^t}$.
Finally, $P(\phi_t) = 0$ for $t \geq 0$.
\end{thm}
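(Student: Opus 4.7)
My plan is to handle the three assertions in turn. For the identity $P(\bar\phi_t)=\log((1-\lambda)^t/(1-\lambda^t))$ when $\lambda^t\leq\frac12$, I would pass via the two-to-one semiconjugacy to the piecewise-linear model $T_\lambda$ (under which $-t\log|T_\lambda'|$ differs from the pulled-back induced potential by a coboundary, so the pressures agree). Since $-t\log|T_\lambda'|$ takes only the two values $t\log(1-\lambda)$ on $V_1$ and $t\log(\lambda(1-\lambda))$ on each $V_n$ with $n\geq 2$, I would look for a piecewise-constant Perron--Frobenius eigenfunction $H=\sum_{n\geq 1}h_n\mathbf{1}_{V_n}$. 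The Markov structure ($V_k$ maps onto $\bigcup_{j\geq k-1}V_j$ for $k\geq 3$, while $V_1,V_2$ are full) reduces the equation $L_{\bar\phi_t}H=\mu H$, after setting $a=(1-\lambda)^t$ and $b=\lambda^t(1-\lambda)^t$, to $ah_1+b\sum_{k=2}^{n+1}h_k=\mu h_n$ for every $n\geq 1$. Subtracting consecutive equations yields the recurrence $bh_{n+2}=\mu(h_{n+1}-h_n)$, and the ansatz $h_n=r^n$ together with the boundary relation $\mu=a+br$ forces $r=1/(1-\lambda^t)$ and $\mu=(1-\lambda)^t/(1-\lambda^t)$, exactly as stated. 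The threshold $\lambda^t\leq\frac12$ is then precisely what guarantees that $\mu$ is the Gurevich eigenvalue of $L_{\bar\phi_t}$ and that the accompanying conformal structure is conservative.

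For the two-sided bound on $P(\phi_t)$ when $t\leq 1$ in the infinite-measure range $\lambda\in(\frac{2}{3+\sqrt 5},\frac12)$, I would combine liftability with the Fibonacci tail structure. After verifying liftability via Lemma~\ref{lem:lift_shadow} (approximating any relevant measure by periodic orbits away from the critical Cantor set, as in the AFN example), Lemma~\ref{lem:rec} gives $P(\overline{\phi_t-P(\phi_t)\tau})=0$, equivalently $\mu(s):=e^{P(\bar\phi_t-s\tau)}$ satisfies $\mu(P(\phi_t))=1$. The eigenfunction analysis of the first paragraph extends to the perturbed operator $L_{\bar\phi_t-s\tau}$: the branch with return time $\s_k$ acquires a factor $e^{-s\s_k}$ with $\s_k\sim\GR^k/\sqrt 5$, and the associated characteristic equation has dominant root $R=R(s,t)$ as in the statement. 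Solving $\mu(s)=1$ asymptotically as $s\searrow 0$ reduces to balancing Fibonacci-indexed terms supported on the lacunary sequence $\{\s_k\}$ rather than all of $\N$; accordingly one only obtains a two-sided sandwich rather than a sharp asymptotic, with the stated exponents and constants $C_0,C_1$ determined by the interplay of $R$ with $\GR$. Note that Theorem~\ref{thm:PbarP} is unavailable here because $\mu_{\bar\phi}(\tau>n)$ is not regularly varying.

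For the final claim $P(\phi_t)=0$ when $t\geq 1$ (freezing), by Ruelle's inequality $h_\mu\leq\chi_\mu$ every $f$-invariant probability $\mu$ satisfies $h_\mu+\int\phi_t\,d\mu\leq(1-t)\chi_\mu\leq 0$, so by the variational principle $P(\phi_t)\leq 0$. Meanwhile, because the Fibonacci combinatorics produce arbitrarily long orbit segments that shadow the critical Cantor set, one can construct periodic orbits $p_k$ with Lyapunov exponents $\chi_{p_k}\to 0$; the Dirac measures on these give $h_\mu+\int\phi_t\,d\mu=-t\chi_{p_k}\to 0$, whence $P(\phi_t)\geq 0$ and therefore $P(\phi_t)=0$.

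The main obstacle is the second paragraph. Since the tails are concentrated on the lacunary set $\{\s_k\}$ rather than being regularly varying, neither Theorem~\ref{thm:PbarP} nor Theorem~\ref{thm:PbarP2} applies; one has to analyse the perturbed transfer operator directly on the Stratmann--Vogt model, track the root $R(s,t)$ across the Fibonacci gaps, and verify that the resulting two-sided estimates are sharp in the stated sense. Producing the explicit constants $C_0,C_1$ and justifying that the gap between the two exponents is genuine rather than an artefact of the method is where essentially all the technical work of~\cite{BT12,BT15} is concentrated.
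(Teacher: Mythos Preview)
This theorem is not proved in the present paper: it is stated as a summary of \cite[Theorem~A]{BT12} and \cite[Theorem~D]{BT15}, with no accompanying argument here. So there is no ``paper's own proof'' to compare against; the paper simply quotes these results and then uses Theorem~\ref{thm:PbarP2} and the tail estimate Theorem~\ref{thm:Fibo} to \emph{improve} on \eqref{eq:fibopressure} via Proposition~\ref{prop:fibpress}.

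That said, your sketch is a reasonable outline of the strategy in the cited references. Your eigenfunction calculation for $P(\bar\phi_t)$ via the Stratmann--Vogt model $T_\lambda$ is essentially the computation behind \cite[Theorem~A]{BT12}; note however that $F_\lambda$ has constant slope $[\lambda(1-\lambda)]^{-1}$ on all branches, so the nontrivial structure enters only through the Markov transitions of $T_\lambda$, exactly as you describe. Your freezing argument for $t\geq 1$ (Ruelle's inequality plus periodic orbits with vanishing Lyapunov exponent) is standard and correct; you have also correctly read ``$t\geq 0$'' in the statement as the evident typo for ``$t\geq 1$''. For the two-sided bound \eqref{eq:fibopressure} you correctly identify the obstruction (lacunary return times, no regular variation) and the mechanism (analysing the perturbed characteristic equation over the Fibonacci gaps); this is indeed where \cite{BT15} does the real work, and you are right that the present paper's machinery does not reproduce that argument but rather supersedes the conclusion via the reinduced scheme of Theorem~\ref{thm:Fibo}.
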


The induced system has no phase transition at $t = 1$; instead $t \mapsto P(\bar\phi_t)$ 
is real analytic near $t = 1$ with a strictly negative slope.  We will first focus on 
the thermodynamics of $T_\lambda$ before moving on to the full Fibonacci case.

\subsubsection{Stratmann-Vogt example in isolation}

Here we consider the Stratmann-Vogt map $T=T_\lambda$ 
forgetting that it came from a unimodal map 
(so, for example, the Fibonacci numbers do not feature here).  As shown in \cite{BT12}, 
for $\lambda=1/2$ the equilibrium measure $\mu_t$ for $\phi = -\log|T'|$ is null recurrent. 
The idea here is to induce to a first return map to $V_1$.

As in  \cite{BT12}, from a topological point of view, the map $T$ has a natural Markov coding with 
transition matrix
$$
A = 
\left( \begin{array}{ccccccc}
1 & 1 & 1 & 1 & 1 & \hdots   & \hdots \\
1 & 1 & 1 & 1 & 1 & \hdots   & \hdots \\
0 & 1 & 1 & 1 & 1 & 1 & \hdots \\
0 & 0 & 1 & 1 & 1 & 1 & \hdots \\
\vdots & \vdots & 0  & 1 & 1 & 1 & \hdots \\
\vdots & \vdots & \vdots  & \vdots   & \vdots   & \vdots   & \ddots
\end{array} \right).
$$
Let $\rho$ be the first return time of $T$ to $V_1$ and denote the domains of the first return 
map $T^\rho$ by $\{Y_i\}_i$.  For $Y_i$ with $\rho|_{Y_i}=n$ we 
have $|Y_i|=\lambda \left[\lambda(1-\lambda)\right]^{n-1}=\frac2{4^n}$.  
Moreover, since the branches are linear, the equilibrium measure $\bar\mu$ for the return map $T^\rho$ is precisely 
normalised Lebesgue measure on $V_1$.  So to compute the measure of the tails, we only need to 
estimate the number $\#\{\tau=n\}$ of intervals $Y_i$ with $\rho|_{Y_i} = n$.  
But this can be computed explicitly.

The only way a point can return to $V_1$ in one step is if either it started in $V_1$ or it lies in $V_2$.  
Clearly there is only one domain  with inducing time $1$.  For $\rho=n\ge 2$, the problem boils down  to how many ways 
there are of entering $V_2$ without hitting $V_1$.

\begin{lemma}\label{lem:catalan}
The number $\#\{\rho=n\}$ is the $n$th Catalan number $C_n = \frac{1}{n+1} {2n \choose n}$.
\end{lemma}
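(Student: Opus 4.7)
The plan is to identify $\#\{\rho = n\}$ as an admissible-word count in the countable Markov chain with transition matrix $A$, and then to enumerate those words via a bijection with a Catalan-counted lattice-path family. Because $T$ is piecewise linear with each branch $T|_{V_k}$ a full linear surjection onto $T(V_k)$, the Markov condition suffices for cylinders to be non-empty, so $\#\{\rho = n\}$ equals the number of admissible words $(y_0, y_1, \ldots, y_n)$ with $y_0 = y_n = 1$ and $y_j \ne 1$ for $1 \le j \le n-1$.

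Reading off the constraints from $A$: the only way to reach state $1$ from a state $\ge 2$ is from state $2$, forcing $y_{n-1} = 2$; and from any state $k \ge 2$ the admissible successors are $\{k-1, k, k+1, \ldots\}$, i.e.\ $y_{j+1} \ge y_j - 1$ for $1 \le j \le n-2$. The initial transition from $y_0 = 1$ is unconstrained. The time-reversing substitution $w_j := y_{n-j} - 1$ then converts the problem into counting sequences $(w_1, \ldots, w_{n-1})$ of positive integers with $w_1 = 1$ and $w_{j+1} \le w_j + 1$; equivalently, walks on $\{1, 2, 3, \ldots\}$ whose up-steps are bounded by $+1$ while down-steps are arbitrary.

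The combinatorial core is to show that such walks are Catalan-enumerated. Prepending $w_0 = 0$ and appending $w_n = 0$ produces a non-negative lattice excursion of length $n$ from $0$ to $0$ with up-steps bounded by $+1$, touching the axis only at its two endpoints. Time-reversing identifies these primitive excursions with the classical generalised Dyck (\L ukasiewicz) excursions with steps in $\{-1, 0, 1, 2, \ldots\}$, which are in canonical bijection with rooted plane trees and hence enumerated by $C_n = \tfrac{1}{n+1}\binom{2n}{n}$. An equally direct self-contained alternative is to split such a $w$-walk at its first return to height $1$ into an independent pair of shorter admissible paths and thereby verify the Catalan recurrence $a_n = \sum_{k=0}^{n-1} a_k\,a_{n-1-k}$ with $a_0 = 1$ by induction.

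The main obstacle is not substantive; it is meticulous bookkeeping. One must align the lattice-path parameter correctly with the return time $n$, handle the boundary case $n = 1$ (the single domain on which $T$ maps $V_1$ directly back into itself, corresponding to the empty $w$-sequence and to $C_0 = 1$), and verify that every admissible symbolic path indeed corresponds to a non-empty cylinder, which here is automatic because each $T|_{V_k}$ is a full branch.
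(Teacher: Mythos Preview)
Your route is essentially the paper's: both set up a bijection between the first-return itineraries avoiding $V_1$ and a Catalan-counted family of lattice paths. The paper encodes the itinerary directly as a Dyck word via an explicit peak-by-peak rule; you instead time-reverse and shift indices to land on a Lukasiewicz-type excursion and then appeal either to the tree bijection or to the Catalan recurrence. These are interchangeable translations of the same admissibility constraint $i_{j+1}\ge i_j-1$, $i_j\ge 2$.

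One bookkeeping point deserves care. Your $w$-path $(w_0,\ldots,w_n)$ has $n$ steps, and primitive excursions of this type (up-steps $\le +1$, strictly positive between the endpoints; equivalently, after reversal, Lukasiewicz steps $\ge -1$) are enumerated by $C_{n-1}$, not $C_n$. Your own boundary remark ``$n=1$ corresponds to the empty $w$-sequence and to $C_0$'' already signals this, and direct enumeration confirms it: $\#\{\rho=2\}=1$, $\#\{\rho=3\}=2$, $\#\{\rho=4\}=5$, i.e.\ $\#\{\rho=n\}=C_{n-1}$. The paper's Dyck encoding carries the same index shift (its displayed path $(V_2,V_{i_1},\ldots,V_{i_{n-1}},V_2)$ has one more vertex than the actual itinerary for $\rho=n$), so you are matching the paper modulo a shared off-by-one; this is immaterial for the subsequent tail estimate since $C_{n-1}\sim C_n/4$.
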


\begin{proof}
A Dyck word of length $2n$, $(x_1, \ldots, x_{2n})$ has $n$ ups and $n$ downs and at any 
truncation $(x_1, \ldots, x_k)$ for $k\le n$, never has more downs than ups.  There are $C_n$ Dyck words of 
length $2n$.  We will show that this is in bijection with our paths $(V_2, V_{i_1}, \ldots, V_{i_{n-1}},V_2)$. 
We can code each path from $V_2$ back to $V_2$ by a Dyck word.  
For a  Dyck word $(x_{j_1}, \ldots, x_{j_n})$, we call a maximal subword $(u, u,\ldots, u, d, d, \ldots, d)$ 
a  \emph{peak} (namely, the letter preceding this is either empty or $d$, and the letter succeeding it is 
either empty or $u$).  For the first peak, if it consists of $k_1$ consecutive ups and then $\ell_1$ 
consecutive downs, write $m_1=2+k_1-\ell_1$. Then we get the code $(V_2, V_{2+k_1}, V_{2+k_1-1}, V_{2+k_1-2}, \ldots, V_{m_1})$.  
For the next peak, consisting of $k_2$ consecutive ups and then $\ell_2$ consecutive downs, write $m_2=m_1+k_2-\ell_2$ and we append the code $(V_{m_1+k_2}, V_{m_1+k_2-1}, V_{m_1+k_2-2}, \ldots, V_{m_2})$.  Continuing in this way, and appending $V_2$ at the very end, 
we obtain a code $(V_2, V_{i_1}, \ldots, V_{i_n},V_2)$.  
In other words, the Dyck word is obtained as the concatenation
\begin{equation}\label{eq:dyck}
 u^{1+i_1-2} d u^{1+i_2-i_1}  d u^{1+i_3-i_2}  d  \dots u^{1+2-i_{n-1}} d.
\end{equation}
It is easy to see that this coding is a bijection.
\end{proof}

We know that $C_n\sim \frac{4^n n^{-3/2}}{\sqrt\pi}$.
In fact, the precise asymptotics (see \cite[Chapter V.2]{FlaSed09}) is:
$$
C_n =\frac{4^n}{\sqrt\pi }\left( n^{-\frac32} - 
\frac98 n^{-\frac52} +\frac{145}{128} n^{-\frac72} -\cdots\right).
$$
Since for any domain of the first return map $Y_i$ with $\rho|_{Y_i}=n$ we have 
$|Y_i|=\frac{|V_1|}{1-\lambda} \left[\lambda(1-\lambda)\right]^{n-1}$ and since 
the equilibrium measure $\bar\mu$ for the return map is precisely normalised $\bar\phi$-conformal 
measure on $V_1$ for $\bar\phi = \sum_{j=0}^{\rho-1} \phi \circ T^j$, this means that
\begin{eqnarray}
\bar\mu(\rho > n) &=& \frac{|V_1|}{|V_1| (1-\lambda) }
\sum_{k > n} C_k \left[\lambda(1-\lambda)\right]^{k-1} \nonumber\\
&\sim&
\frac{1}{\sqrt\pi(1-\lambda)}\sum_{k > n} \left[4 \lambda(1-\lambda)\right]^k k^{-\frac32}. \label{eq:Cat_tail}
\end{eqnarray}
Hence, for $\lambda < \frac12$, it is has exponential tails.

In the interesting null recurrent case, 
$\lambda = \frac12$, so $|Y_i| = \frac{2|V_1|}{4^n}$, 
the tail is 
$$
\bar\mu(\rho>n) = c_1n^{-\frac12}+c_2n^{-\frac32}+O(n^{-\frac52}),
$$ 
which fits in with (H1)(b).
The induced map consists of countably many linear onto branches, so it satisfies all 
the other conditions of Section~\ref{sec:abstsu} and Theorem~\ref{thm:PbarP} applies.

\begin{prop}\label{prop:psilogtau3}
Consider $T_{1/2}$.  
Suppose that $\psi:[0,1]\to \R$ is piecewise $\kappa$-H\"older for some $\kappa>0$, 
such that $\psi(x) = C' > 0$ for $x \in V_0$ and $\psi(x) \leq -b < 0$ for $x \in [0,1] \setminus V_0$ 
and $P(\phi+s\psi)>0$. Then, provided $C'$ is sufficiently large compared to $b$,
$$
P(\phi+s\psi) = (CP(\overline{\phi+s\psi}))^2(1+Q(s))
$$
as $s \to 0$ for $C$ and $Q$ as in Theorem~\ref{thm:PbarP}. 
\end{prop}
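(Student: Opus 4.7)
The plan is to verify the hypotheses of Theorem~\ref{thm:PbarP} for $f = T_{1/2}$, $\phi = -\log|T_{1/2}'|$, and the first return induced map $F = T_{1/2}^{\rho}$ on $V_1$. The Catalan-number count in Lemma~\ref{lem:catalan} together with the tail computation~\eqref{eq:Cat_tail} give $\mu_{\bar\phi}(\rho > n) = c_1 n^{-1/2} + c_2 n^{-3/2} + O(n^{-5/2})$, which is precisely (H1)(b) with $\beta = 1/2$; this is the source of the exponent $1/\beta = 2$ in the conclusion. Because $F$ is piecewise linear with full branches onto $V_1$, it has the BIP property and $\bar\phi = -\log|F'|$ is locally constant on the inducing partition, so $\bar\phi \in SV$. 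Properties (P1) and (P2) then hold on the usual Banach space $\B$ of bounded piecewise H\"older functions via the standard Aaronson--Denker/Sarig spectral theory.

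For (H2), on each cylinder $\{\rho = n\}$ I would write
\[
\bar\psi(x) = C' - \psi_0(x), \qquad \psi_0(x) := -\sum_{j=1}^{n-1}\psi(T^{j}x),
\]
and observe that the hypotheses $\psi = C'$ on $V_0$ and $\psi \le -b < 0$ off $V_0$, together with piecewise H\"older regularity, give $(n-1)b \le \psi_0(x) \le (n-1)M$, where $M := \sup_{[0,1]\setminus V_0}|\psi| < \infty$. This places us in (H2)(ii) with $\gamma = 1 \in [\beta,1]$, and the required regularity of $\psi_0$ is inherited from $\psi$ via the linear branches of $T_{1/2}$. Liftability of $\phi + s\psi$ for $s$ in a neighbourhood of $0$ is verified exactly as in the AFN case by invoking Lemma~\ref{lem:lift_shadow}: the only $f$-invariant measures that fail to lift to $(V_1, F)$ are supported on the (countable) backward orbit of the accumulation point $0$, and these are not isolated in the variational supremum, since they are shadowed by periodic orbits in $V_1$ (such as the fixed point at $\tfrac{2}{3}$).

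The delicate point is the side condition $s \ll P(\phi+s\psi)^{a}$. Because $\psi_0 \asymp \rho$ and $\int\rho\,d\mu_{\bar\phi} = \infty$, we have $\int\bar\psi\,d\mu_{\bar\phi} = -\infty$, so the direct linearisation of pressure at $s = 0$ fails; this is exactly why the statement requires "$C'$ sufficiently large compared to $b$". I would instead use the truncation argument of the remark immediately preceding Section~\ref{subsec:afn}: choose a fixed $N$ with $C' > (N-1)M$, so that $\bar\psi > 0$ on the truncated system $\{\rho \le N\}$, making $K := \int \bar\psi\,d\mu_{s,N} > 0$. Then $\int \rho\,d\mu_{s,N} \le N$ is uniformly bounded, and Abramov's formula yields $P(\phi+s\psi) \ge P_N(\overline{\phi+s\psi})/\int \rho\,d\mu_{s,N} \gtrsim s$ as $s \to 0$. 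The linear lower bound $P(\phi+s\psi) \gtrsim s$ forces $s \ll P(\phi+s\psi)^{a}$ for any $a \in (0,1)$, and Theorem~\ref{thm:PbarP} then delivers
\[
P(\phi+s\psi) = (CP(\overline{\phi+s\psi}))^{2}(1+Q(s))
\]
with $C$ and $Q$ as in Theorem~\ref{thm:PbarP}. The main obstacle is precisely this linear lower bound on $P(\phi+s\psi)$: the failure of $\bar\psi \in L^{1}(\mu_{\bar\phi})$ closes off the direct derivative-of-pressure route, and the truncation scheme only runs once $C'$ is large enough to keep $\bar\psi$ strictly positive on a nontrivial finite-state subsystem.
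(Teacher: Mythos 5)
Your plan matches what the paper actually does: the paper's own proof of this proposition is a single sentence pointing back to Proposition~\ref{prop:psilogtau}, and the substance you fill in --- (H1)(b) via the Catalan-number tail $\bar\mu(\rho>n) = c_1 n^{-1/2}+c_2n^{-3/2}+O(n^{-5/2})$, BIP and (P1)--(P2) from the full-branch linear structure, $\bar\phi, \bar\psi\in SV$ since $T_{1/2}$ is uniformly expanding, (H2)(ii) with $\gamma=1$ from the sign constraint $\psi\le -b<0$ off the inducing domain, and liftability via Lemma~\ref{lem:lift_shadow} because the only non-liftable invariant measure is the Dirac mass at the indifferent point $0$ --- is exactly what that proof carries out. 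Your observations about why the (H2)(ii) route is forced (namely $\int\bar\psi\,d\mu_{\bar\phi}=-\infty$) and about the role of ``$C'$ sufficiently large'' are the right ones.

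The one step that does not survive scrutiny as written is your verification of the side condition $s\ll P(\phi+s\psi)^a$. You fix a truncation level $N$ with $C'>(N-1)M$ and claim $P_N(\overline{\phi+s\psi})\gtrsim s$. But for any \emph{fixed} $N$, deleting states of positive measure from a mixing BIP shift strictly drops the pressure, so $P_N(\bar\phi)<P(\bar\phi)=0$, and hence
\[
P_N(\overline{\phi+s\psi})=P_N(\bar\phi)+K_N s+O(s^2)<0
\]
for all sufficiently small $s>0$; your Abramov inequality then gives a negative lower bound and is vacuous. The fix is to let $N=N(s)\to\infty$. Since the branches are full and linear one has $|P_N(\bar\phi)|\asymp\mu_{\bar\phi}(\rho>N)=O(N^{-\beta})$, so taking $N(s)\asymp s^{-1/\beta}$ makes the truncation deficit of order $s$. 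The price is that $\int\rho\,d\mu_{s,N}$ is no longer uniformly bounded --- the estimate in the remark preceding Section~\ref{subsec:afn} gives $\int\rho\,d\mu_{s,N}\lesssim \sum_{n\le N}n^{-\beta}\asymp N^{1-\beta}$ --- and Abramov's formula then yields only $P(\phi+s\psi)\gtrsim s^{1/\beta}$ rather than $\gtrsim s$. That weaker polynomial lower bound is still sufficient: $s\ll\bigl(s^{1/\beta}\bigr)^a$ for any $a<\beta$, so Theorem~\ref{thm:PbarP} applies as you intend. In short, the architecture of your argument is correct and coincides with the paper's, but the truncation level must be sent to infinity as $s\to 0$; with $N$ held fixed the argument collapses.
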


The proof goes as in Proposition~\ref{prop:psilogtau}, except that the assumption $\bar\psi\in SV$ does not 
require a particular H\"older exponent on $\psi$ as $T_{1/2}$ is uniformly expanding.

\begin{rmk}
By \cite[Theorem 2]{BT12}, $P(\phi_t) = (1-t) \log 4$ for $\phi_t = -t \log |T'|$ and $t\le1$.
\end{rmk}

\subsubsection{The Fibonacci case}

We are interested in the infinite acim $\mu=\mu_1$ which, as in Theorem~\ref{thm:bt2}, 
we see whenever $\lambda \in (\frac{2}{3+\sqrt{5}}, \frac12)$.  Since the pressure of the relevant 
potential $\phi=\log|f'|$ is zero, the form of the induced measure for $T_\lambda$ can be taken 
directly from \cite{BT15}.
The inducing time $\r$ is a lacunary sequence ($\r$ takes only Fibonacci numbers as values).  
From \cite[Proposition 1]{BT12} we have the small tail estimates for the 
equilibrium measure $\mu_{\bar\phi}$ for $\bar\phi = \sum_{j=0}^{\r-1} \phi \circ f^j$:
\begin{equation} \label{eq:Fibotail}
\mu_{\bar\phi}(\r = \s_k) = \frac{1-2\lambda}{1-\lambda} \left( \frac{\lambda}{1-\lambda} \right)^k.
\end{equation}
The big tail estimates
\begin{equation} \label{eq:Fibotail2}
\mu_{\bar\phi}(\r \geq \s_k) =  \left( \frac{\lambda}{1-\lambda} \right)^k
\sim \GR_0 \s_k^{(\log \frac{\lambda}{1-\lambda} )/ \log \GR} = \GR_0 \s_k^{-\beta}
\end{equation}
are not regularly varying, but at least polynomial with exponent 
$\beta := (\log \frac{1-\lambda}{\lambda} )/ \log \GR$.
Here $\GR_0 =\frac{\lambda}{1-\lambda} \frac{3+\sqrt{5}}{2+\sqrt{5}}^{\beta}$ based on Binet's
formula $\s_k = \frac{1}{\sqrt{5}} (G^{k+2} - (-1/G)^{k+2})$,
and note that $\beta \in (0,1)$ for $\lambda \in (\frac{2}{3+\sqrt{5}}, \frac12)$.

\begin{prop}
For the countably piecewise linear Fibonacci map $f_\lambda$ 
for $\lambda \in (\frac{2}{3+\sqrt{5}}, \frac12)$ and potential $\phi_t = -t\log|f'_\lambda|$ 
we have
$$
P(\phi_t) \asymp (1-t)^{1/\beta} \quad \text{ as } \quad t \nearrow 1.
$$
\label{prop:fibpress}
\end{prop}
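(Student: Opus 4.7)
The plan is to apply Theorem~\ref{thm:PbarP2} to the decomposition $\phi_t = \phi + (1-t)\psi$ with $\phi = -\log|f'_\lambda|$, $\psi = \log|f'_\lambda|$ and $s = 1-t$ (so $\phi + s\psi = \phi_t$), and then combine the conclusion with a direct analytic analysis of $t \mapsto P(\overline{\phi_t})$ near $t = 1$. Note that Theorem~\ref{thm:PbarP} cannot be used because the lacunary tail \eqref{eq:Fibotail2} is not regularly varying, which is precisely why the weaker (H1)(a) version in Theorem~\ref{thm:PbarP2} was developed.

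First I would verify the hypotheses. For (H1)(a): since $\r$ takes only the values $\s_k$ and $\s_{k+1}/\s_k \to G$, for any $n$ with $\s_k \le n < \s_{k+1}$ the tail is constant and equal to $\mu_{\bar\phi}(\r \ge \s_{k+1}) \asymp \s_{k+1}^{-\beta} \asymp n^{-\beta}$ by \eqref{eq:Fibotail2}, giving (H1)(a) with $\beta = (\log\tfrac{1-\lambda}{\lambda})/\log G$. The induced map $F_\lambda$ is a countably piecewise linear BIP Gibbs--Markov map with constant slope $\lambda^{-1}(1-\lambda)^{-1}$, so (P1) and (P2) are standard in the Banach space $\B$. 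Both $\bar\phi = -\log|F_\lambda'|$ and $\bar\psi = \log|F_\lambda'|$ are then constant; in particular $\bar\phi\in SV$ and $e^{s\bar\psi}-1 = O(s)$, so (H2)(i) holds with any $\eps>0$. Liftability of $(Y,F)$ for $\phi + s\psi$ follows from Lemma~\ref{lem:lift_shadow}: as in Proposition~\ref{prop:psilogtau}, the only non-lifting invariant measures are Dirac masses on the postcritical orbit, and these can be approximated in entropy-plus-integral by periodic measures that do lift. Strict positivity $P(\phi_t)>0$ for $t<1$, and the technical side condition $s \ll P(\phi + s\psi)^a$, are supplied by the lemma in the subsection \emph{Checking $P(\phi+s\psi)=o(s)$}, whose hypothesis $\bar\psi|_{\{\r=n\}} \le C\log n$ is trivial here since $\bar\psi$ is constant, and whose positivity assumption $\int \bar\psi\,d\mu_{\bar\phi} > 0$ holds for the sign choice above.

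With these checks in place, Theorem~\ref{thm:PbarP2} produces constants $C_1^*, C_2^* > 0$ and $Q(s)\to 0$ with
\[
\bigl(C_2^* P(\overline{\phi_t})\bigr)^{1/\beta}(1+Q(1-t)) \le P(\phi_t) \le \bigl(C_1^* P(\overline{\phi_t})\bigr)^{1/\beta}(1+Q(1-t)).
\]
To complete the proof it remains to show $P(\overline{\phi_t}) \asymp (1-t)$ as $t \nearrow 1$. Because the induced system is BIP and $\bar\phi_t$ is locally constant with uniformly summable variations, $t \mapsto P(\overline{\phi_t})$ is real-analytic on a neighbourhood of $t=1$ (cf.\ \cite[Theorem 6.5]{Sarig01a}); liftability yields $P(\overline{\phi_1}) = 0$, while the derivative at $t=1$ is $-\int \log|F_\lambda'|\,d\mu_{\bar\phi_1} = -\log\bigl((\lambda(1-\lambda))^{-1}\bigr)\,\mu_{\bar\phi_1}(Y) < 0$. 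Hence $P(\overline{\phi_t}) = K(1-t) + O((1-t)^2)$ with $K>0$, and substituting into the two-sided bound yields $P(\phi_t) \asymp (1-t)^{1/\beta}$.

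The main obstacle is to confirm that the lacunary character of the tails does not prevent (H1)(a) or introduce oscillation in the asymptotic; this is the reason one only obtains $\asymp$ (with possibly unequal constants $C_1^*,C_2^*$) rather than the sharper $\sim$ behaviour of Theorem~\ref{thm:PbarP}. The rest is bookkeeping because the induced map is as rigid as possible (constant derivative, constant induced potentials), so the functional-analytic estimates collapse to elementary identities involving the Fibonacci ratio $G$.
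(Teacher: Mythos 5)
There is a genuine gap, and it is exactly the issue the paper spends the most effort on. You assert that ``the induced map $F_\lambda$ is a countably piecewise linear BIP Gibbs--Markov map'', but $F_\lambda = f^\sigma$ does \emph{not} have the BIP property. Its branches are linear but not full (under the semiconjugacy to $T_\lambda$ in \eqref{eq:tildeStrVogt}, $T_\lambda(V_n)=[0,\lambda^{n-2})$ for $n\geq 2$, which misses $V_1$ for every $n\geq 3$, so no finite collection of big images works). Since the entire abstract framework of Sections~\ref{sec:abstsu}--\ref{sec:ev} is set up under the standing hypothesis that the induced map is BIP, Theorems~\ref{thm:PbarP} and \ref{thm:PbarP2} cannot be invoked with $F$ as the inducing scheme, and your verifications of (P1), (P2), (H1)(a), and (H2) for $F$ are aimed at the wrong map.

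The paper's proof deals with this by re-inducing: it takes the first return map $\hat F = F^\rho$ of $F$ to $\hat V_1 = [z_0,z_1)\cup(\hat z_1,\hat z_0]$, which \emph{does} have full linear branches (hence BIP), and then must establish (H1)(a) for the composite return time $\tau$ with $f^\tau = \hat F$. That is the content of Theorem~\ref{thm:Fibo}, a substantial combinatorial estimate (Catalan numbers via Dyck-word codings, the reflection principle for random walks on $\N_0$, Stirling's formula) which your proposal skips entirely. Your tail computation $\mu_{\bar\phi}(\sigma > n) \asymp n^{-\beta}$ and the analysis of $t\mapsto P(\overline{\phi_t})$ near $t=1$ via Theorem~\ref{thm:bt2} are both correct in themselves, but neither suffices: they concern $\sigma$ and $F$, not $\tau$ and $\hat F$. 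To close the argument you would need to (i) observe that $F$ is not BIP and pass to $\hat F$, (ii) prove $\mu_{\hat\phi}(\tau>n)\asymp n^{-\beta}$ as in Theorem~\ref{thm:Fibo}, and (iii) only then apply Theorem~\ref{thm:PbarP2} with $(\hat V_1,\hat F)$ as the inducing scheme, checking liftability for $\hat F$ (which the paper does via \cite[Theorem 2]{Sarig01a} for first returns of a zero-pressure potential).
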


The main challenge here is proving the following theorem, which is of independent interest: the proof of the above proposition then follows similarly to the previous cases.  In order to state the theorem let $\mu_{\bar\phi}$ denote the measure for $F=f^\r$ and
$\mu_{\hat\phi}$ denote the measure for $\hat F = F^\rho$,
Let $\bar\rho:=\int_Y \rho~d\mu_{\bar\phi}<\infty$.  
Recall that $\beta = \log[(1-\lambda)/\lambda] /\log \GR$.

\begin{thm}\label{thm:Fibo}
Let $\mu_{\hat\phi}$ be the reinduced measure, associated to $F^\rho = (f^\r)^\rho = f^\tau$. 
There are constants $0 < C_2 < C_1$ such that
$$
C_2 n^{-\beta} \leq \mu_{\hat\phi}(\tau>n) \leq C_1 n^{-\beta}.
$$
\end{thm}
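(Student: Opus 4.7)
The plan is to reduce the two-sided estimate on the reinduced tail $\mu_{\hat\phi}(\tau>n)$ to the single-step tail of $\r$ under $\mu_{\bar\phi}$, via a ``one big jump'' argument appropriate to heavy-tailed summands of index $\beta\in(0,1)$. First I would convert \eqref{eq:Fibotail2} to the polynomial form $\mu_{\bar\phi}(\r>n)\asymp n^{-\beta}$: for $\s_k\le n<\s_{k+1}$ one has $\mu_{\bar\phi}(\r>n)=\mu_{\bar\phi}(\r\ge\s_{k+1})=(\lambda/(1-\lambda))^{k+1}$, and since $\s_k\asymp\GR^k$ while $(\lambda/(1-\lambda))^k=\GR^{-k\beta}$ by the very definition of $\beta$, the claim follows. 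The goal is then to transfer this two-sided bound to $\tau$ under $\mu_{\hat\phi}$.

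For the \textbf{lower bound}, I would choose $Y_0$ to be the top one-cylinder of the Markov partition of $F$, namely $\hat V_1$ with $\r\equiv\s_0=1$, so that by the first row of the matrix $A$ recalled above, $F(Y_0)$ covers every one-cylinder of $F$. For $k$ such that $\s_k>n$, put
$$
E_k:=\{x\in Y_0:F(x)\in\hat V_{k+1}\}.
$$
On $E_k$ we already have $\tau(x)\ge\r(F(x))=\s_k>n$ regardless of the subsequent itinerary, and the constant-distortion Gibbs property of the piecewise linear $F_\lambda$ yields
$$
\mu_{\hat\phi}(E_k)=\frac{\mu_{\bar\phi}(E_k)}{\mu_{\bar\phi}(Y_0)}\asymp\mu_{\bar\phi}(\hat V_{k+1})\asymp\Bigl(\frac{\lambda}{1-\lambda}\Bigr)^k\asymp n^{-\beta}.
$$

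For the \textbf{upper bound}, I would decompose $\{\tau>n\}$ according to the maximal level $K=\max_{0\le j<\rho}i_j$ reached by the $F$-orbit, where $F^j(x)\in\hat V_{i_j+1}$. The big-jump piece is dispatched via a Kac-type identity:
$$
\mu_{\hat\phi}(K\ge k)\le\E_{\mu_{\hat\phi}}\bigl[\#\{j<\rho:i_j\ge k\}\bigr]=\frac{\mu_{\bar\phi}(\r\ge\s_k)}{\mu_{\bar\phi}(Y_0)}\asymp(\lambda/(1-\lambda))^k,
$$
so choosing $k^*$ with $\s_{k^*}\asymp n$ gives $\mu_{\hat\phi}(\tau>n,\,K\ge k^*)\ll n^{-\beta}$. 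The complementary event $\{\tau>n,\,K<k^*\}$ is controlled by the upper-triangular transition rule $i_{j+1}\ge i_j-1$: once the maximum level is capped, admissible $F$-paths can only descend by one per step, so they are forced into a rigid zig-zag combinatorial structure.

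The \textbf{main obstacle} will be making this complementary estimate honest, because $\E_{\mu_{\bar\phi}}[\r]=\infty$ for $\beta<1$ and so one cannot directly argue that ``small summands do not add up to $n$''. The remedy is a weighted combinatorial count in the spirit of Lemma~\ref{lem:catalan}: the rule $i_{j+1}\ge i_j-1$ decomposes any admissible path from $Y_0$ back to $Y_0$ into concatenated Catalan-type hills in the level variable, each carrying a Fibonacci weight $q^{i_0+i_1+\cdots}$ with $q=\lambda/(1-\lambda)<1$. Summing this generating function over hill configurations of heights $\le k^*$ and total weight $\sum_j\s_{i_j}>n$ produces, through the geometric decay in $q$ and the exponential growth of $\s_k$, a bound of order $n^{-\beta}$ matching the big-jump contribution and completing the proof.
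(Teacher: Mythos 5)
Your lower bound is clean and actually simpler than the paper's. Choosing $E_k=\{x\in\hat V_1:F(x)\in\hat V_{k+1}\}$ with $\s_k>n$, noting that $k\ge1$ forces $\rho\ge 2$ so $\tau\ge\r\circ F>n$ on $E_k$, and using the Kac identity $\mu_{\hat\phi}=\mu_{\bar\phi}|_{\hat V_1}/\mu_{\bar\phi}(\hat V_1)$ together with the Gibbs/linearity structure and \eqref{eq:Fibotail} indeed gives $\mu_{\hat\phi}(E_k)\asymp q^k\asymp n^{-\beta}$ (up to checking that $F(\hat V_1)$ meets every $\hat V_{k+1}$, which holds here up to a harmless factor of two because $F(\hat V_1)=[c,\hat z_0]$). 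This avoids the identity from \cite[Lemma A.1]{BT16} that the paper uses, which is a genuine simplification. The Kac-type bound $\mu_{\hat\phi}(K\ge k^*)\le\E_{\mu_{\hat\phi}}[\#\{j<\rho:i_j\ge k^*\}]=\mu_{\bar\phi}(\r\ge\s_{k^*})/\mu_{\bar\phi}(\hat V_1)\ll n^{-\beta}$ is also correct and elegant.

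However, there is a real gap in the complementary estimate $\mu_{\hat\phi}(\tau>n,\,K<k^*)\ll n^{-\beta}$, and this is precisely where all of the work in the paper's proof lives. Two concrete problems. First, the weight you attach to a path, ``a Fibonacci weight $q^{i_0+i_1+\cdots}$ with $q=\lambda/(1-\lambda)$,'' is not the $\mu_{\hat\phi}$-measure of the corresponding cylinder. For the piecewise linear Fibonacci map the transition ``probability'' from level $i$ to level $j\ge i-1$ is $(1-\lambda)\lambda^{j-i+1}$, so a closed path of $F$-length $\rho$ has $\mu_{\hat\phi}$-mass $\asymp(\lambda(1-\lambda))^\rho$, depending only on $\rho$ and not on the area $\sum_j i_j$. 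A generating function built on $q^{\sum i_j}$ will not track the measure. Second, even with the correct weight $(\lambda(1-\lambda))^\rho$, the naive bound
$$
\mu_{\hat\phi}(\tau>n,\,K<k^*)\le\sum_{k<k^*}\mu_{\hat\phi}(K=k)
$$
is useless (the right side is $\asymp 1$), and using only $\mu_{\hat\phi}(K=k)\ll q^k=n^{-\beta}q^{-(k^*-k)}$ for $k$ near $k^*$ accumulates a superfluous $\mathrm{polylog}(n)$ factor. One must exploit the constraint $\tau>n$ \emph{jointly} with $K=k$ and the path length $\rho$. This is exactly what the paper does: it uses the reflection principle to count Dyck paths of length $2k$ reaching height $M$ with $\s_M\asymp n/k$, producing the quantities $\Delta_k={2k\choose M+k}[\lambda(1-\lambda)]^k$, locates the maximum at $k_0=M/(1-2\lambda)$ via Stirling, and shows exponential decay in $|k-k_0|/\sqrt M$ so that $\sum_k\Delta_k\asymp n^{-\beta}$ rather than $n^{-\beta}\sqrt{\log n}$. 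A separate and equally delicate computation bounds the negative term $J_2$ by $n^{-\beta'}$ with $\beta'>\beta$, under the numerical inequality $8\lambda(1-\lambda)^2\sqrt{2/3}\,e^{0.04}<1$ on the relevant $\lambda$-range. None of this is dispatched by ``geometric decay in $q$ and exponential growth of $\s_k$,'' and the claim that the hill-decomposition generating function ``produces a bound of order $n^{-\beta}$'' is asserted rather than argued. To repair the proposal you would either have to carry out the $\Delta_k$-type Stirling analysis in your decomposition by maximum level, or adopt the paper's route via the renewal identity of \cite[Lemma A.1]{BT16}.
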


\begin{proof}
From \cite[Lemma A.1]{BT16} we have
\begin{align*}
\mu_{\hat\phi}(\tau>n) &-\bar\rho\, \mu_{\bar\phi}(\r>n) \\
& = \sum_{k\ge 0}\left(\int_{\{\tau=\r_{k+1}\}} 1_{\{n \ge \r>n - \r_k\}}\circ f^{\r_k}~d\mu_{\hat\phi} 
- \int_{\{\tau>\r_{k+1}\}} 1_{\{ \r>n\}}\circ f^{\r_k}~d\mu_{\hat\phi}\right).
\end{align*}
Contrary to the statement of  \cite[Lemma A.1]{BT16}, the domain $[z_0, \hat z_0]$ of our map $f^\sigma$
is a proper superset of the domain $V_1$ of $(f^\sigma)^\rho$. However, the proof of Lemma A.1.\ relies 
only on the representation $\pi^{-1}([z_0, \hat z_0])$ of $[z_0, \hat z_0]$ inside the tower over
$V_1$, and taking that point of view, there is no difference in the proof.
\\[2mm]
{\bf Upper bound:} Take $N = \lfloor \frac{-\beta }{\log 4\lambda(1-\lambda) } \log n \rfloor$.
It suffices to estimate the sum 
$$
J_1 = \sum_{k=0}^N \int_{\{\tau=\r_{k+1}\}} 1_{\{n \ge \r>n- \r_k\}}\circ f^{\r_k}~d\mu_{\hat\phi}.
$$
because the remaining sum 
$\sum_{k=N+1}^\infty \int_{\{\tau=\r_{k+1}\}} 1_{\{n \ge \r>n- \r_k\}}\circ f^{\r_k}~d\mu_{\hat\phi} = O(N^{-\beta}).$
Since $\tau(y) = \r_{k+1}(y)$ implies that $f^{\r_k}(y) \in \hat V_2 \cap F^{-1}(\hat V_1)$, 
so $\r(f^{\r_k}(y)) = 2$ and therefore we have additionally $\r_k > n-2$.
Hence the summand in the above formula consists of all $k$-paths $\hat V_2$ to $\hat V_2$
avoiding $\hat V_1$, and for which $\tau_k > n-2$.
As in the proof of Lemma~\ref{lem:catalan}, such paths are in one-to-one correspondence with 
Dyck words of length $2k$, or equivalently, with random walks on $\N_0 := \N \cup \{ 0 \}$ of length $2k$, 
starting and ending at $0$. 
If the maximum of such a path is $M < (\log(n-1)-\log k)/\log G$, then
$\r_k \leq k \s_M \leq n-2$ so the condition $\r_k > n-2$ implies that 
$M \geq (\log(n-1)-\log k)/\log G$.
This also implies that $k \geq N_0 := \lfloor (\log n)/\log G \rfloor$ since 
otherwise the path is too short to return from $M$ to $0$.
The theory of random walks on $\N_0$ 
(specifically, the reflection principle) says that the number of $2k$-paths
from $0$ back to $0$ with maximum $\geq M$ is equal to the number of $2k$-paths
from $0$ to $2M$.
Indeed, such a path must have a last instance at which it takes the value $M$, and then
we reflect the remaining path, namely from $M$ to $0$, to a path from
$M$ to $2M$.
The number of such paths is ${2k \choose M+k}$.
Since each such path corresponds to one domain of $\hat F$
of length $[\lambda(1-\lambda)]^k$, we obtain that the measure of the set of
points with this property (for fixed $M$) is bounded by
\begin{equation}\label{eq:largestk}
\Delta_k := {2k \choose M+k}[\lambda(1-\lambda)]^k  \sim
\frac{(2k)^{2k} [\lambda(1-\lambda)]^k }{ (k+M)^{k+M} (k-M)^{k-M} }\,
\sqrt{\frac{k}{\pi(k^2-M^2)}},
\end{equation}
where we used Stirling's formula.
Taking the logarithm of this expression, and setting its derivative 
$\frac{d}{dk}\log(\Delta_k) = 0$,
we find
$$
0 = \log\left(\frac{2k}{k+M} \frac{2k}{k-M}\right) + \log \lambda(1-\lambda)
- \frac{1}{2\pi} \frac{k^2+M^2}{k(k^2-M^2)}.
$$
For  $k = k_0 := \frac{M}{1-2\lambda}$,
so $\frac{k+M}{2k} = 1-\lambda$ and $\frac{k-M}{2k} = \lambda$,
the two logarithms in this expression cancel, and the third term is small
for large $M$, so we expect that $\Delta_k$ is maximal for $k$ close to $k_0$.
Comparing the binomial coefficients directly, one can check that
$\max\{ \Delta_j : j \neq k_0, k_0-1 \} < \Delta_{k_0} < \Delta_{k_0-1} =
 \frac{2k_0}{2k_0-1} \Delta_{k_0}$, so the global maximum is $\Delta_{k_0-1}$.
For the value $k_0 = M/(1-2\lambda)$, we have $M = N_0 = (\log n)/\log \GR$, and
\begin{eqnarray}\label{eq:Deltak0}
\Delta_{k_0-1} &=& \frac{2k_0}{2k_0-1} \Delta_{k_0} = \frac{2k_0}{2k_0-1} \sqrt{\frac{1-2\lambda}{4\lambda(1-\lambda)\pi M} }
[\lambda/(1-\lambda) ]^M \nonumber \\
&=& \frac{2k_0}{2k_0-1}\sqrt{\frac{(1-2\lambda) \log \GR}{4\lambda(1-\lambda)\pi} }\, 
\frac{n^{-\beta}}{\sqrt{\log n}}.
\end{eqnarray}
Therefore
$$
\frac{1}{\sqrt{\log n}} n^{-\beta} \ll J_1 \asymp \sum_{k=N_0}^N \Delta_k \ll \sqrt{\log n}\, n^{-\beta},
$$
but we will improve this to $J_1 \asymp n^{-\beta}$.

Let $M_0$ be the largest even integer $\leq \sqrt{M}$.
We claim that there exist $0 < \theta < \theta'$ and $K < K'$ such that
\begin{eqnarray*}
K e^{\theta/2} n^{-\beta} \leq \sum_{j = k_0}^{k_0+ M_0 - 1} \Delta_j && \sum_{j = k_0+\ell M_0}^{k_0+(\ell+1) M_0 - 1} \Delta_j 
\leq K e^{-\theta \ell/2} n^{-\beta},\\
K e^{-\theta'/2} n^{-\beta} \leq \sum_{j = k_0-M_0}^{k_0 - 1} \Delta_j
&\qquad& \sum_{j = k_0-(\ell+1) M_0}^{k_0-\ell M_0 - 1} \Delta_j \leq 
K' e^{-\theta \ell/2} n^{-\beta},
\end{eqnarray*}
for all $\ell \geq 0$ such that the index $j \in [M, N]$.

To prove this, first compute (recalling that $1-\lambda = \frac{k_0+M}{2k_0}$
and $\lambda = \frac{k_0-M}{2k_0}$)
\begin{eqnarray*}
\Delta_{k_0+M_0} &=& \Delta_{k_0} [\lambda(1-\lambda)]^{M_0} \cdot 
\frac{(2k_0+2)(2k_0+4) \cdots (2k_0+2M_0)}{(k_0+M+1)(k_0+M+2) \cdots (k_0+M+M_0)} \\
&& \qquad \cdot \frac{(2k_0+2-1) (2k_0+4-1) \cdots (2k_0+2M_0-1)}
 {(k_0-M+1)(k_0-M+2) \cdots (k_0-M+(M_0-1)) } \\
&=& \Delta_{k_0} \prod_{j=1}^{M_0} \left( (1-\lambda) \frac{2(k_0+j)}{k_0+M+j} \right)
\, \prod_{j=1}^{M_0} \left( \lambda \frac{2(k_0+j)-1}{k_0-M+j} \right)\\
 &=& \Delta_{k_0} \prod_{j=1}^{M_0} \frac{1+\frac{2j}{2k_0}}{1+\frac{j}{k_0+M}}
 \, \prod_{j=1}^{M_0} \frac{1+\frac{2j-1}{2k_0}}{1+\frac{j}{k_0-M}}\\
&\sim& \Delta_{k_0} \prod_{j=1}^{M_0} \left(1 + j \frac{M}{k_0(k_0+M)} \right)
 \prod_{j=1}^{M_0} \left(1-j \frac{M}{k_0(k_0-M)} \right)\\
 &\sim& \Delta_{k_0} \prod_{j=1}^{M_0} \left(1 - j \frac{2M^2}{k_0(k_0^2-M^2)} \right).
\end{eqnarray*}
Recall that $k_0 = \frac{M}{1-2\lambda}$, so 
$\frac{2M^2}{k_0(k_0^2-M^2)} = \frac{(1-2\lambda)^3}{2M\lambda(1-\lambda)} =: \frac{\theta}{M}$.
Therefore
\begin{eqnarray*}
\Delta_{k_0+M_0} &\sim& \Delta_{k_0} \exp 
\sum_{j=1}^{M_0} \log \left(1-\frac{j}{M} \frac{(1-2\lambda)^3}{2\lambda(1-\lambda)}\right) \\
&\sim&\Delta_{k_0} \exp \left( -  \frac{\theta}{M}  \sum_{j=1}^{M_0} j \right)
= \Delta_{k_0} \exp \left( -  \frac{\theta}{M} \frac{(M_0+1)M_0}{2} \right)
\sim \Delta_{k_0} e^{-\theta/2}.
\end{eqnarray*}
Since $\Delta_{k_0+j}$ is decreasing in $j$, we have
$$
K e^{-\theta/2} n^{-\beta}
= M_0 \Delta_{k_0} e^{-\theta/2} \leq \sum_{j=0}^{M_0-1} \Delta_{k_0+j} \leq M_0 \Delta_{k_0} = K n^{-\beta},
$$
for $K = \sqrt{\frac{1-2\lambda}{4\lambda(1-\lambda)\pi} }$ as in \eqref{eq:Deltak0}.
A similar argument gives a $\theta'$ such that
$$
K e^{-\theta'/2} n^{-\beta}
= M_0 \Delta_{k_0} e^{-\theta'/2} \leq \sum_{j=1}^{M_0} \Delta_{k_0-j} \leq M_0 \Delta_{k_0} = 
K n^{-\beta},
$$
and also
$$
\sum_{j=\ell M_0}^{(\ell+1)M_0-1} \Delta_{k_0+j} \leq K e^{\ell \theta/2}  n^{-\beta}
\quad \text{ and } \quad
\sum_{j=(\ell+1) M_0}^{\ell M_0-1} \Delta_{k_0-j} \leq K e^{\ell \theta'/2}  n^{-\beta}, 
$$
as long as $j \in [M, N]$. This proves the claim.

Finally, using geometric series, we can find $0 < K_2 < K_1$ such that
$$
K_2 n^{-\beta} \leq 
J_1 = \sum_{j=M}^N \Delta_j \sim \sum_{\ell = (k_0-M)/M_0}^{(N-k_0)/M_0}
\sum_{j=k_0+\ell M_0}^{k_0+(\ell+1)M_0-1} \Delta_j \leq K_1 n^{-\beta}.
$$
\\[2mm]
{\bf Lower bound:} We estimate
$$
J_2 := \sum_{k=0}^N\int_{\{\tau>\r_{k+1}\}} 1_{\{ \r>n \}} \circ f^{\r_k}~d\mu_{\hat\phi}.
$$
If $y$ is such that $\s_M = \r \circ f^{\r_k}(y)$ for $n \geq 3$, then automatically 
$\rho > k+1$, and the domain of $f^{\r_{k+1}} = F^{k+1}$ containing $y$ maps
to an interval of length $O(\lambda^{M-1})$.
This is because $F^{k+1}$ maps the corresponding domain
onto one of the two components of $\cup_{m \geq M-1} \hat V_m$.
The slope of $F^k$ is $\lambda [\lambda(1-\lambda)]^k$, so the domain itself has length
$O(\lambda^M [\lambda(1-\lambda)]^k)$.

Using Dyck words, i.e., the codings from \eqref{eq:dyck} (or equivalently paths of the standard random walk on $\N_0$),
the path from $V_2$ to $V_m$, $m \geq M-1$, corresponds to a Dyck word
$x_1 \dots x_{2k+m}$ with $k+m$ ``ups'' and $k$ ``downs''.
Since the number of ``ups'' always exceeds the number of ``downs`` in all
subwords $x_1 \dots x_j$, the reflection principle gives that there
are ${2k+m \choose k+m} - {2k+m \choose k+m+2} =
{2k+m \choose k+m} (1-\frac{k(k+1)}{(k+m+2)(k+m+1)}) < {2k+m \choose k+m}$
such words.
Since $\r > n$ implies $m \geq N_0 := \lfloor (\log n)/\log \GR\rfloor$, we get
$$
J_2 \ll \sum_{k = N_0}^N \sum_{m \geq N_0} {2k+m \choose k+m} \lambda^m [\lambda(1-\lambda)]^k.
$$
Now
\begin{eqnarray*}
 {2k+m \choose k+m} &=&  
 {2k+m \choose \frac{2k+m}{2}} \frac{ {2k+m \choose k+m} }{  {2k+m \choose \frac{2k+m}{2}} } \\
 &\leq& {2k+m \choose \frac{2k+m}{2}} 
 \frac{(k+1)(k+2) \cdots (k+\frac{m}{2})}{(k+\frac{m}{2}+1)(k+\frac{m}{2}+2) \cdots (k+m)}\\
 &\leq& \sqrt{\frac{2}{\pi}} \frac{2^{2k+m}}{\sqrt{2k+m}} 
 \frac{(k+1)(k+2) \cdots (k+m/2)}{(k+\frac{m}{2}+1)(k+\frac{m}{2}+2) \cdots (k+m)}. 
\end{eqnarray*}
Therefore
$$
J_2 \ll \sum_{k = N_0}^N [4\lambda(1-\lambda)]^k \sum_{m \geq N_0} \frac{(2\lambda)^m}{\sqrt{2k+m}}
\frac{(k+1)(k+2) \cdots (k+\frac{m}{2})}{(k+\frac{m}{2}+1)(k+\frac{m}{2}+2) \cdots (k+m)}.
$$
The largest value of the summand is achieved at $k = N_0$, and is equal to
\begin{align*}
[4\lambda(1-\lambda)]^{N_0} & \sum_{m \geq N_0} \frac{(2\lambda)^m}{\sqrt{2N_0+m}}
\frac{(N_0+1)(N_0+2) \cdots (N_0+\frac{m}{2})}{(N_0+\frac{m}{2}+1)(N_0+\frac{m}{2}+2) \cdots (N_0+m)} \\
& = [4\lambda(1-\lambda)]^{N_0} \sum_{m \geq N_0}  \frac{(2\lambda)^m}{\sqrt{2N_0+m}}
\prod_{j=1}^{N_0/2} \frac{N_0+j}{N_0+\frac{m}{2}+j}.
\end{align*}
We estimate the product in this expression with $m = N_0$, using a Riemann sum, as
\begin{eqnarray*}
\prod_{j=1}^{N_0/2} \frac{N_0+j}{N_0+\frac{N_0}{2}+j} &\leq& 
\sqrt{2/3}^{N_0} \exp \sum_{j=1}^{N_0/2} \log(1+\frac{j}{3N_0}) \\ 
&\sim& \sqrt{2/3}^{N_0} \exp\left(\frac{N_0}{2} \int_0^1 \log(1+\frac{x}{6}) \, dx\right) \leq
\sqrt{2/3}^{N_0}  e^{0.04 N_0}.
\end{eqnarray*}
Therefore
$$
J_2 \ll \frac{N-N_0}{\sqrt{2N_0}} \left( 4\lambda^2(1-\lambda) \sqrt{\frac23} e^{1/12}\right)^{N_0} 
\leq \frac{1}{\sqrt{\log n}} n^{\frac{1}{\log \GR} \log[8\lambda^2(1-\lambda) \sqrt{\frac23} e^{0.04}]} < n^{-\beta},
$$
provided $8\lambda^2(1-\lambda) \sqrt{\frac23} e^{0.04} < \log \frac{\lambda}{1-\lambda}$,
or equivalently $8\lambda(1-\lambda)^2 \sqrt{\frac23} + 0.04 < 1$.
The maximum of $\lambda^2(1-\lambda)$ on $[\frac{2}{3+\sqrt{5}}, \frac12]$ 
occurs at $\lambda = \lambda_0 := \frac{2}{3+\sqrt{5}}$, and then
$8\lambda_0(1-\lambda_0)^2 \sqrt{\frac23} + 0.04 \approx 0.93$.
Hence, there exists $\beta' > \beta$ such that $J_2 \ll n^{-\beta'}$ as $n \to \infty$.

Combining the estimates for $J_1$ and $J_2$ with \eqref{eq:Fibotail2}, we find 
$\mu_{\hat\phi}(\tau > n) = \bar\rho \, \mu_{\bar\phi}(\r > n) + J_1 - J_2 =  O(n^{-\beta})$,
and this proves the theorem.
\end{proof}

\begin{pfof}{Proposition~\ref{prop:fibpress}}
The facts that the potential $-t\log|f'|$ is recurrent for $t\le 1$ and $P(-t\log|f'|)>0$ 
for $t<1$ follow from \cite{BT07, BT15}.  
 The induced map $F = f^{\r}$ has good liftability properties because
 every $f$-invariant measure is liftable, see \cite{Kellift}, except the Dirac measure $\delta_0$, 
 but this measure is supported outside the core $[f^2(c), f(c)]$.  
 So all of our potentials are liftable for $F$.  Indeed, 
 the induced potentials $-t\log|F'|-\r P(-t\log|f'|)$ for $F$ are also recurrent since as in \cite{BT15} 
 these potentials all 
 have equilibrium states for $t\le 1$. Moreover, the variations of the induced potential are zero.
 
However, $F$ doesn't have the BIP property, and therefore 
we induce again, as a first return to $\hat V_1:=[z_0, z_1)\cup(\hat z_1, \hat z_0]$. 
For $\rho(y) = \min\{ n \geq 1 : F^n(y) \in \hat V_1\}$ the map $\hat F = F^\rho: \hat V_1 \to \hat V_1$
is then a Markov map with linear branches that are onto a component of $\hat V_1$ (so the BIP property holds) 
and reinduced potential
$\hat \phi = \sum_{j=0}^{\rho-1} \bar\phi \circ F^j = -\log |\hat F'|$.
If we take a first return map for a Markov system with a recurrent potential with 
summable variations and zero pressure, then the potential induces to one which also has zero pressure 
(see \cite[Theorem 2]{Sarig01a}), so all of our potentials are liftable for $\hat F$ as well.

It remains check (H1)(a), but this follows from Theorem~\ref{thm:Fibo}.
\end{pfof}

\begin{rmk}
In order to compare this with the results in \cite{BT15} 
(as recalled in Theorem~~\ref{thm:bt2} we note that
analogous result holds for $\phi_t$-conformal measure (instead of Lebesgue), 
when we replace $\lambda$ with $\lambda^t$.

For $\lambda^t \leq \frac12$ and hence $\sqrt{1-4\lambda^t(1-\lambda)^t} \in [0,1)$
we have $2\sqrt{(1-\lambda)^t (1-\lambda^t)} \geq 1 \geq 1-\sqrt{1-4\lambda^t(1-\lambda)^t}$ for $t \leq 1$.
Squaring gives $\log \frac{1-\lambda^t}{\lambda^t} \geq \log R$
for $R$ as in Theorem~\ref{thm:bt2}.
Another simple calculation gives 
$\frac{2}{\lambda} (1-2\lambda) \geq \log \frac{1-\lambda^t}{\lambda^t}$.
Therefore, the exponent $1/\beta(t)$ lies indeed between the values given 
in \eqref{eq:fibopressure}.
In fact, the lower bound from \eqref{eq:fibopressure} is quite accurate, because
both $\lim_{t \nearrow 1} \log R \sim 2(1-2\lambda)$ and $\lim_{t \nearrow 1} 
\log \frac{1-\lambda^t}{\lambda^t} \sim 2(1-2\lambda)$ as $\lambda \nearrow \frac12$.
\end{rmk}

\end{document}